\newcommand{\mylabel}[2]{#2\def\@currentlabel{#2}\label{#1}}
\DeclareMathOperator{\haut}{ht}
\DeclareMathOperator{\diam}{diam}
\DeclareMathOperator{\GEM}{GEM}
\DeclareMathOperator{\MLMC}{MLMC}
\DeclareMathOperator{\BGW}{BGW}
\newcommand{\ensemblenombre}[1]{\mathbb{#1}}
\newcommand{\N}{\ensemblenombre{N}}
\newcommand{\Z}{\ensemblenombre{Z}}
\newcommand{\R}{\ensemblenombre{R}}
\newcommand{\intervalle}[4]{\mathopen{#1}#2
	\mathclose{}\mathpunct{},#3
	\mathclose{#4}}
\newcommand{\intervalleff}[2]{\intervalle{[}{#1}{#2}{]}}
\newcommand{\intervallefo}[2]{\intervalle{[}{#1}{#2}{)}}
\newcommand{\intervalleoo}[2]{\intervalle{(}{#1}{#2}{)}}
\newcommand{\intervalleentier}[2]{\intervalle\llbracket{#1}{#2}
	\rrbracket}
\newcommand{\enstq}[2]{\left\lbrace#1\mathrel{}\middle|\mathrel{}#2\right\rbrace}
\newcommand{\abs}[1]{\left\lvert#1\right\rvert}
\newcommand{\Gam}[1]{\Gamma \left(#1\right)}
\newcommand{\Cut}[2]{\mathrm{Cut}\left(#1,#2\right)}
\newcommand{\tend}[2]{\underset{#1}{\overset{#2}{\longrightarrow}}}
\renewcommand{\P}{\mathbb{P}}
\newcommand{\bU}{\mathbb{U}}
\newcommand{\bT}{\mathbb{T}}
\newcommand{\bA}{\mathbb{A}}
\newcommand{\bt}{\mathbf{t}}
\newcommand{\sG}{\mathscr{G}}
\newcommand{\Ec}[1]{\mathbb{E} \left[#1\right]}
\newcommand{\Pp}[1]{\mathbb{P} \left(#1\right)}
\newcommand{\Ecsq}[2]{\mathbb{E} \left[#1\mathrel{}\middle|\mathrel{}#2\right]}
\newcommand{\Ppsq}[2]{\mathbb{P} \left(#1\mathrel{}\middle|\mathrel{}#2\right)}
\newcommand{\ndN}{\mathbb{N}}
\renewcommand{\Pr}[1]{\mathbb{P}(#1)}
\newcommand{\Ex}[1]{\mathbb{E}[#1]}
\newcommand{\Va}[1]{\mathbb{V}[#1]}
\newcommand{\cL}{\mathcal{L}}
\newcommand{\cM}{\mathcal{M}}
\newcommand{\cB}{\mathcal{B}}
\newcommand{\cS}{\mathcal{S}}
\newcommand{\cT}{\mathcal{T}}
\newcommand{\cA}{\mathcal{A}}
\newcommand{\cH}{\mathcal{H}}
\newcommand{\cD}{\mathcal{D}}
\newcommand{\mM}{\mathsf{M}}
\newcommand{\KT}{\mathbf{T}}
\newcommand{\UHT}{\mathbb{U}}
\newcommand{\eqdist}{\,{\buildrel d \over =}\,}
\newcommand{\convdis}{\,{\buildrel d \over \longrightarrow}\,}
\newcommand{\convp}{\,{\buildrel p \over \longrightarrow}\,}
\newtheorem{theorem}{Theorem}[section]
\newtheorem{corollary}[theorem]{Corollary}
\newtheorem{proposition}[theorem]{Proposition}
\newtheorem{lemma}[theorem]{Lemma}
\newtheorem{remark}[theorem]{Remark}
\newtheorem{question}[theorem]{Question}
\numberwithin{equation}{section}
\newcommand{\dres}{\mathbf{d}}
\newcommand{\ben}[1]{{\textcolor{blue}{B: #1}}}
\newcommand{\Bb}{\mathbb{B}}
\newcommand{\ind}[1]{\mathbf{1}_{\left\lbrace #1 \right\rbrace}}  
\newcommand{\out}[1]{d_{#1}^+}
\newcommand{\dec}{\text{dec}}
\title{\textbf{Decorated stable trees}}
\date{}
\author{Delphin S\'{e}nizergues\thanks{University of British Columbia, E-mail: senizergues@math.ubc.ca} \and Sigurdur \"Orn Stef\'ansson\thanks{University of Iceland, E-mail: sigurdur@hi.is} \and
Benedikt Stufler\thanks{Vienna University of Technology, E-mail: benedikt.stufler@tuwien.ac.at}}
\begin{document}
	
	\maketitle
	
	\let\thefootnote\relax\footnotetext{ \\\emph{MSC2010 subject classifications}. 60F17, 60C05 \\
		\emph{Keywords: decorated trees, looptrees, stable trees, invariance principle, self-similarity.} }
	
	\vspace {-0.5cm}

\begin{abstract}
We define \emph{decorated $\alpha$-stable trees} which are informally obtained from an $\alpha$-stable tree by blowing up its branchpoints into random metric spaces. This generalizes the $\alpha$-stable looptrees of Curien and Kortchemski, where those metric spaces are just deterministic circles.
We provide different constructions for these objects, which allows us to understand some of their geometric properties, including compactness, Hausdorff dimension and self-similarity in distribution. 
We prove an invariance principle which states that under some conditions, analogous discrete objects, random decorated discrete trees, converge in the scaling limit to decorated $\alpha$-stable trees.
We mention a few examples where those objects appear in the context of random trees and planar maps, and we expect them to naturally arise in many more cases.  
\end{abstract}

\begin{figure}[h]
	\centering
	\begin{minipage}{\textwidth}
		\centering
		\includegraphics[width=1.0\linewidth]{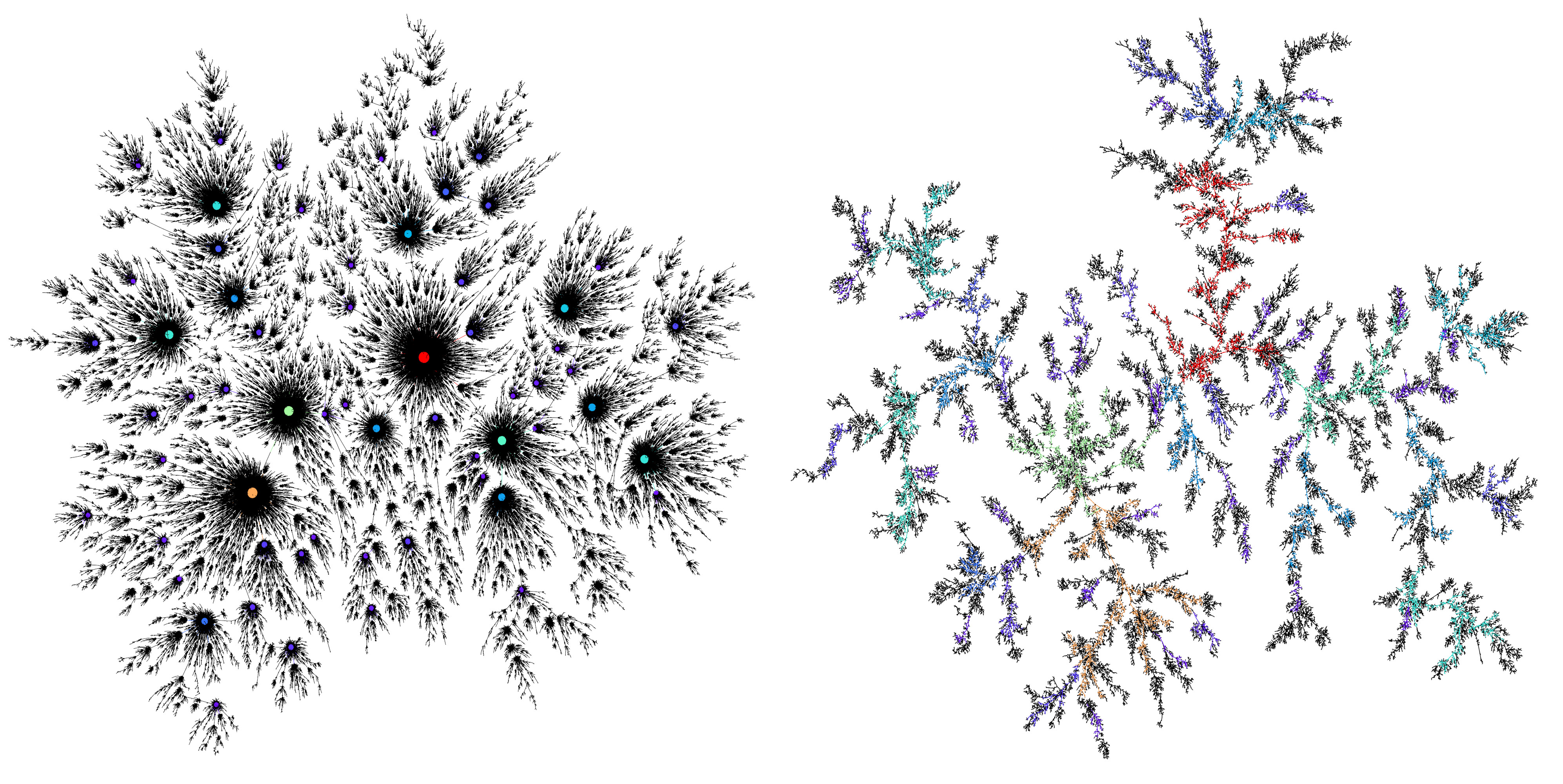}
		\caption{
		The right side illustrates a decorated $\alpha$-stable tree for $\alpha=5/4$. The decorations are Brownian trees (2-stable trees) rescaled according to the ``width'' of branchpoints. The left side shows the corresponding $\alpha$-stable tree before blowing up its vertices. Colours mark vertices with high width and their corresponding decorations.}
		\label{fi:exiterated}
\end{minipage}
\end{figure}

\section{Introduction}

In this paper we develop a general method for constructing random metric spaces by gluing together metric spaces referred to as \emph{decorations} along a tree structure which is referred to as \emph{the underlying tree}. 
The resulting object, which we define informally below and properly in Section~\ref{s:disc_construction} and Section~\ref{s:cts_construction}, will be called a \emph{decorated tree}. 
We will consider two frameworks, which we refer to as the \emph{discrete case} and the \emph{continuous case}. 

In the discrete case the underlying tree is a finite, rooted plane tree $T$. 
To each vertex $v$ in $T$ with outdegree $d_T^+(v)$ there is associated a decoration $B(v)$ which is a metric space with one distinguished point called the inner root and $d_T^+(v)$ number of distinguished points called the outer roots. 
In applications, the decorations $B(v)$ may e.g.~be finite graphs and they will only depend on the outdegree of $v$. 
We will thus sometimes write $B(v) = B_{d^+_T(v)}$. 
The decorated tree is constructed by identifying, for each vertex $v$, each of the outer roots of $B(v)$ with the inner root of $B(vi)$, $i=1,\ldots,d^+_T(v)$, where $vi$ are the children of $v$. 
This identification will either be done at random or according to some prescribed labelling. 
Our aim is to describe the asymptotic behaviour of \emph{random} spaces that are constructed in this way. 
To this end, let $(T_n)_{n \ge 1}$ be a sequence of random rooted plane trees. 
The sequence is chosen in such a way that the exploration process of the random trees, properly rescaled,  converges in distribution towards an excursion of an $\alpha$-stable Lévy process with $\alpha \in (1,2)$. 
This particular choice is motivated by applications as will be explained below. 
We will further assume that there is a parameter $0 < \gamma \leq 1$, which we call the \emph{distance exponent}, such that the (possibly random) decorations $B_k$, with the metric rescaled by $k^{-\gamma}$, converge in distribution towards a compact metric space $\mathcal{B}$ as $k\to \infty$, in the Gromov--Hausdorff sense.

In the continuous case, we aim to construct a decorated tree which is the scaling limit of the sequence of discrete decorated trees described above. 
The underlying tree is chosen as the $\alpha$-stable tree $\mathcal{T}_\alpha$, with $\alpha \in (1,2)$, which is a random compact real tree. 
One important feature of the $\alpha$-stable tree is that it has countably many vertices of infinite degree. 
The size of such a vertex is measured by a number called its \emph{width} which plays the same role as the degree in the discrete setting. 
The continuous decorated tree is constructed in a way similar as in the discrete case, where now each vertex $v$ of infinite degree in $\mathcal{T}_\alpha$ is independently replaced by a decoration $\mathcal{B}_v$ distributed as $\mathcal{B}$, in which distances are rescaled by width of $v$ to the distance exponent $\gamma$.  
The gluing points are specified by singling out an inner root and sampling outer roots according to a measure $\mu$ and the decorations are then glued together along the tree structure.
See Figure~\ref{fi:exiterated} for an example of a decorated tree.
The precise construction is somewhat technical and is described in more detail in Section~\ref{ss:realtrees}. 
In order for the construction to yield a compact metric space we require some control on the size of the decorations $\mathcal{B}_v$. 
More precisely, we will assume that $\gamma > \alpha-1$  and that there exists $p>\frac{\alpha}{\gamma}$ such that $\Ec{\diam(\mathcal{B})^p}<\infty$. 
Under some further suitable conditions on the decorations and the underlying tree which are stated in Section~\ref{ss:conditions} we arrive at the following main results:	

\begin {itemize}
\item \emph{Construction of the decorated $\alpha$-stable tree:} The decorated $\alpha$-stable tree defined above is a well defined random compact metric space, see Section~\ref{s:cts_construction} and Theorem~\ref{thm:compact_hausdorff}.

\item \emph{Invariance principle:} The sequence of discrete random decorated trees, with a properly rescaled metric, converges in distribution towards a decorated $\alpha$-stable tree in the Gromov--Hausdorff--Prokhorov sense. 
See Theorem~\ref{th:invariance} for the precise statement.

\item \emph{Hausdorff dimension:} Apart from degenerate cases, the Hausdorff dimension of \emph {the leaves} of the decorated $\alpha$-stable tree equals $\alpha/\gamma$, see Theorem~\ref{thm:compact_hausdorff}.  
It follows that if the decorations a.s.~have a Hausdorff dimension $d_\mathcal{B}$ then the Hausdorff dimension of the decorated $\alpha$-stable tree equals $\max\{d_\mathcal{B},\alpha/\gamma\}$.

\end{itemize}
\begin{remark}
	A heuristic argument for the condition $\alpha -1 < \gamma$ is the following. If $(T_n)$ is in the domain of attraction of $\cT_\alpha$ then the maximum degree of a vertex in $T_n$ is of the order $n^{1/\alpha}$ and the height of $T_n$ is of the order $n^{\frac{\alpha-1}{\alpha}}$ (up to slowly varying functions). A decoration on a large vertex will then have a diameter of the order $n^{\gamma/\alpha}$. If $\alpha-1 < \gamma$, the diameter of the decorations will thus dominate the height of $T_n$ and the decorated tree will have height of the order $n^{\gamma/\alpha}$, coming only from the decorations. Furthermore, the decorations will "survive" in the limit since the distances of the decorated tree will be rescaled according to the inverse of its height which is of the order of the diameter of the decorations.
	 
	 In the case $\alpha -1 > \gamma$, the decorations will have a diameter which is of smaller order than the height of $T_n$. This means that the proper rescaling of the distances in the decorated tree should be the inverse of the height of $T_n$, i.e.~$n^{-\frac{\alpha-1}{\alpha}}$ which would contract the decorations to points.
	  We therefore expect that in this case, the scaling limit of the decorated tree is equal to a constant multiple of $\cT_\alpha$ which has a Hausdorff dimension $\alpha/(\alpha-1)$.  We we do not go through the details in the current work.
	  
	  We do not know what happens exactly at the value $\alpha-1 = \gamma$ at which there is a phase transition in the geometry and the Hausdorff dimension of the scaling limit. We leave this as an interesting open case.
\end{remark}

\subsection{Relations to other work}

The only current explicit example of decorated $\alpha$-stable trees are the $\alpha$-stable looptrees which were introduced by Curien and Kortchemski \cite{MR3286462}. The looptrees are constructed by decorating the $\alpha$-stable trees by deterministic circles. The authors further showed, in a companion paper \cite{MR3405619}, that the looptrees appear as boundaries of percolation clusters on random planar maps and there are indications that they play a more general role in statistical mechanical models on random planar maps \cite{Richier:2017}. Looptrees have also been shown to arise as limits of various models of discrete decorated trees such as discrete looptrees, dissections of polygons \cite{MR3286462} and outerplanar maps \cite{zbMATH07138334}. In these cases the discrete decorations $B_k$ are graph cycles, chord restricted dissections of a polygon and dissections of a polygon respectively.

The idea of decorating $\alpha$-stable trees by more general decorations was originally proposed by Kortchemski and Marzouk in \cite{kortchemski2016}:

\begin{displayquote} "Note that if one starts with a stable tree and explodes each branchpoint by simply gluing inside
	a “loop”, one gets the so-called stable looptrees .... More
	generally, one can imagine exploding branchpoints in stable trees and glue inside any compact
	metric space equipped with a homeomorphism with [0, 1]."
\end{displayquote}
They mention explicitly an example where an $\alpha_1$-stable tree would be decorated by an $\alpha_2$-stable tree and so on and ask the question what the Hausdorff dimension of that object would be. We give a partial answer to their question in Section~\ref{s:markedtrees}. See Figure \ref{fi:exiterated} for a simulation where $\alpha_1 = 5/4$ and $\alpha_2 = 2$. 

Archer studied a simple random walk on decorated Bienaymé--Galton--Watson trees in the domain of attraction of the $\alpha$-stable tree \cite{arXiv:2011.07266}.
Although her results do not explicitly involve constructing a Gromov--Hausdorff limit of the decorated trees, they do require an understanding of their global metric properties. 
We refer further to her work for examples of decorated trees, some of which we do not discuss in the current work.

 S\'{e}nizergues has developed a general method for gluing together sequences of random metric spaces \cite{senizergues_gluing_2019} and introduced a framework where metric spaces are glued together along a tree structure \cite{senizergues_growing_2022}.  Section~\ref{sec:self-similarity} relies heavily on the results of the latter.  

Let us also mention that Stufler has studied local limits and Gromov--Hausdorff limits of decorated trees which are in the domain of attraction of the Brownian continuum random tree (equivalently the 2-stable tree) \cite{StEJC2018, zbMATH07235577}. 
The Brownian tree has no vertices of infinite degree and thus the rescaled decorations contract to points in the limit which only changes the metric of the decorated tree by a multiplicative factor. 
Its Gromov--Hausdorff limit then generically turns out to be a multiple of the Brownian tree itself (see \cite[Theorem~6.60]{zbMATH07235577}).

\subsection{Outline} The paper is organized as follows.
In Section~\ref{s:preliminaries} we recall some important definitions. 
In Section~\ref{s:disc_construction} the construction of discrete decorated trees is provided and the required conditions on the underlying tree and decorations are stated. 
In Section~\ref{s:cts_construction} we give the definition of the continuous decorated trees and in Section~\ref{s:invariance} we state and prove the invariance principle, namely that the properly rescaled discrete decorated trees converge towards the continuous decorated trees in the Gromov--Hausdorff--Prokhorov sense. 
An alternative construction of the continuous decorated tree is given in Section~\ref{sec:self-similarity} which allows us to prove that it is a compact metric space and gives a way to calculate its Hausdorff dimension. Section~\ref{s:applications} provides several applications of our results. 
Finally, the Appendix is devoted to proving that the conditions on the discrete trees stated in Section~\ref{s:disc_construction} are satisfied for natural models of random trees, in particular Bienaymé--Galton--Watson trees which are in the domain of attraction of $\alpha$-stable trees, with $\alpha \in (1,2)$.

\subsection*{Notation}
We  let $\mathbb{N} = \{1, 2, \ldots\}$ denote the set of positive integers. 
We usually assume that all considered random variables are defined on a common probability space whose measure we denote by $\mathbb{P}$. 
All unspecified limits are taken as $n$ becomes large, possibly taking only values in a subset of the natural numbers.  
We write $\convdis$ and $\convp$ for convergence in distribution and probability, and $\eqdist$ for equality in distribution. An event holds with high probability, if its probability tends to $1$ as $n \to \infty$.
We let $O_p(1)$ denote an unspecified random variable $X_n$ of a stochastically bounded sequence $(X_n)_n$, and write $o_p(1)$ for a random variable $X_n$ with $X_n \convp 0$.  
The following list summarizes frequently used terminology.

\begin{center}

\begin{tabularx}{\linewidth}{lll}
\qquad \qquad &$\out{T}(v)$ & outdegree of vertex $v$ in a tree $T$.\\
&$|T|$ & the number of vertices of a tree $T$.\\
&$B$ & decoration in the discrete setting.  \\
&$B_k$ & decoration of size $k$ in the discrete setting. \\
&$d_k$ & metric on $B_k$. \\
&$\dres_k = k^{-\gamma} d_k$ & rescaled metric on $B_k$. \\
&$\cB$ & decoration in the continuous case.\\
&$\cB_t$ & decoration in the continuous case indexed by $t\in[0,1]$.\\
&$\diam(\cB)$ & the diameter of the metric space $\cB$.\\
&$T$ & discrete tree.\\
&$T_n$ & discrete tree of size $n$.\\
&$\mathcal{T}$ & real tree.\\
&$\mathcal{T}_\alpha$ & the $\alpha$-stable tree.\\
&$\Bb$ & set of branchpoints of $\mathcal{T}_\alpha$. \\
&$X^{(\alpha)}$ & a spectrally positive, $\alpha$-stable Lévy process.\\
&$X^{\text{exc},(\alpha)}$ & an excursion of a spectrally positive, $\alpha$-stable Lévy process.
\end{tabularx}

\end{center}

\paragraph*{Acknowledgements.}
The authors are grateful to Nicolas Curien for suggesting this collaboration, and to Eleanor Archer for discussing some earlier version of her project \cite{arXiv:2011.07266}. 
The second author acknowledges support from the Icelandic Research Fund, Grant Number: 185233-051, and is
grateful for the hospitality at Université Paris-Sud Orsay.

\section{Preliminaries} \label{s:preliminaries}
In this section we collect some definitions and refer to the key background results required in the current work. We start by introducing plane trees, then real trees and the $\alpha$-stable tree and finally we recall the definition of the Gromov--Hausdorff--Prokhorov topology which is the topology we use to describe the convergence of compact mesasured metric spaces.

\subsection{Plane trees} \label{subsec:plane trees}
A rooted plane tree is a subgraph of the infinite Ulam--Harris tree, which is defined as follows. 
Its vertex set, $\UHT$,  is the set of all finite sequences 
\begin{align*}
\UHT = \bigcup _{n\geq 0} \mathbb{N}^n
\end{align*}
where $\mathbb{N}= \{1, 2,\ldots\}$ and  $\mathbb{N}^0 = \{\varnothing\}$ contains the empty sequence  denoted by $\varnothing$. If $v$ and $w$ belong to $\UHT$, their concatenation is denoted by $vw$.  The edge set consists of all pairs $(v,vi)$, $v\in\UHT$, $i\in\mathbb{N}$. The vertex $vi$ is said to be the $i$-th child of $v$ and $v$ is called its parent. In general, a vertex $vw$ is said to be a descendant of $v$ if $w \neq \varnothing$, and in that case $v$ is called its ancestor. This ancestral relation is denoted by $v \prec w$. We write $v \preceq w$ if either $v=w$ or $v \prec w$.   We denote the most recent comment ancestor of $v$ and $w$ by $v\wedge w$, with the convention that $v\wedge w = v$ if $v \preceq w$ (and $v \wedge w = w$ if $w \preceq v$).

A rooted plane tree $T$ is a finite subtree of the Ulam--Harris tree such that

\begin{enumerate}
	\item $\varnothing$ is in $T$, and is called its root. 
	\item If $v$ is in $T$ then all its ancestors are in $T$. 	
	\item If $v$ is in $T$ there is a number $\out{T}(v)\geq 0$, called the outdegree of $v$, such that $vi$ is in $T$ if and only if $1 \leq i \leq \out{T}(v)$. 
\end{enumerate}
In the following we will only consider rooted plane trees. For simplicity, we will refer to them as trees. The number of vertices in a tree $T$ will be denoted by $|T|$ and we denote them, listed in lexicographical order, by $v_1(T), v_1(T),\ldots, v_{|T|}(T)$. When it is clear from the context, we may leave out the argument $T$ for notational ease. 

To the tree $T$ we associate its \L{}ukasiewicz path $(W_{k}(T))_{0\leq k \leq |T|}$ by $W_0(T) = 0$ and 
\begin{align}
W_k(T) = \sum_{j=1}^{k} (\out{T}(v_j)-1)
\end{align}
for $1 \leq k \leq |T|$. 

Throughout the paper, we will consider sequences of \emph{random trees} $(T_n)_{n\geq 1}$. The index $n$ will usually refer to the size of the tree in some sense, e.g.~the number of vertices or the number of leaves. In Section~\ref{ss:conditions} we state the general conditions on these random trees. However, the prototypical example to keep in mind is when $T_n$ is a Bienaymé--Galton--Watson tree, conditioned to have $n$ vertices. In this case the \L{}ukasiewicz path $(W_{k}(T_n))_{0\leq k \leq n}$ is a random walk conditioned on staying non-negative until step $n$ when it hits $-1$.

\subsection{Real trees and the $\alpha$-stable trees} \label{ss:realtrees}

Real trees may be viewed as continuous counterparts to discrete trees. Informally, they are compact metric spaces obtained by gluing together line segments without creating loops. A formal definition  may e.g.~be found in \cite{MR2147221} but for our purposes we give an equivalent definition in terms of excursions on $[0,1]$, which are continuous functions $g:[0,1]\to \mathbb{R}$ with the properties that $g\geq 0$ and $g(0) = g(1) = 1$. For $s,t\in[0,1]$ define $\displaystyle m_g(s,t) = \min_{s\wedge t \leq r \leq s \vee t} g(r)$ end define the pseudo distance 
\begin{align*}
d_g(s,t) = g(s)+g(t) - 2 m_g(s,t).
\end{align*}
The real tree $\mathcal{T}^g$ encoded by the excursion $g$ is defined as the quotient
\begin{align*}
	\cT^g = [0,1]/\{d_g=0\}
\end{align*}
endowed with the metric inherited from $d_g$ which will still be denoted $d_g$ by abuse of notation.

We will consider a family of random real trees $\cT_\alpha$, $\alpha \in (1,2)$ called $\alpha$-stable trees, which serve as the underlying trees in the continuous random decorated trees. A brief definition of $\cT_\alpha$ is given below, but we refer to definitions and more details in \cite{MR1964956}. Let $(X^{(\alpha)}_t; t\in[0,1])$ be a spectrally positive $\alpha$-stable Lévy process, with $\alpha \in (1,2)$, normalized such that
\begin{align} \label{eq:levymeasure}
	\mathbb{E}(\exp(-\lambda X_t)) = \exp(t\lambda^\alpha)
\end{align}
for every $\lambda > 0$. Let $(X^{\text{exc},(\alpha)}_t; t\in[0,1])$ be an excursion of $X_t^{(\alpha)}$, i.e.~the process conditioned to stay positive for $t \in (0,1)$ and to be 0 for $t\in\{0,1\}$. This involves conditioning on an event of probability zero which may be done via approximations, see e.g.~\cite[Chapter~VIII.3]{MR1406564}. Define the associated \emph{continuous height process}
\begin{align*}
	H_t^{\text{exc},(\alpha)} = \lim_{\epsilon\to 0}	 \frac{1}{\epsilon}\int_{0}^t  \mathbf{1}_{\{X_s^{\text{exc}}<I_s^t + \epsilon\}}ds
\end{align*}
where 
\begin{align*}
	I_s^t = \inf_{[s,t]} X^{\text{exc}}.
\end{align*}
Continuity of $H_t^{\text{exc},(\alpha)}$ is not guaranteed by the above definition but there exists a continuous modification which we consider from now on \cite{10.1214/aop/1022855417}.
The $\alpha$-stable tree is the random real tree $\cT_\alpha := \cT^{H^{\text{exc},(\alpha)}}$ encoded by $H^{\text{exc},(\alpha)}$. We refer to \cite{MR1964956,MR1954248,MR2147221} for more details on the construction and properties of $\cT_\alpha$.

Let $T^{\BGW}_n$ be a random BGW tree with a critical offspring distribution in the domain of attraction of an $\alpha$-stable law with $\alpha\in (1,2)$. 
Recall that this example is the prototype for the underlying random trees we consider in the current work. Since the \L ukasiewicz path $W_n(T^{\BGW}_n)$ is in this case a conditioned random walk, it follows that by properly rescaling $(W_{\lfloor n t \rfloor}(T^{\BGW}_n); t\in[0,1])$, it converges in distribution towards the excursion $(X^{\text{exc},(\alpha)}_t; t\in[0,1])$ in the Skorohod space $D([0,1],\mathbb{R})$, \cite[Chapter~VIII]{MR1406564}. 
This observation is the main idea in proving that $T_n^{\BGW}$, with a properly rescaled graph metric, converges towards $\cT_\alpha$ in the Gromov--Hausdorff sense \cite{MR1964956,MR2147221}. The pointed GHP-distance may and will be generalized in a straightforward manner by adding more points and more measures.

\subsection{The Gromov-Hausdorff-Prokhorov topology}

Let $\mathbb{M}$ be the set of all compact measured metric spaces modulo isometries.
An element $(X,d_X,\nu_X) \in \mathbb{M}$ consists of the space $X$, a metric $d_X$ and a Borel-measure $\nu_X$.  
In particular, $\mathbb{M}$ contains finite graphs (e.g.~with a metric which is a multiple of the graph metric)  and real trees. 
The Gromov--Hausdorff--Prokhorov (GHP) distance between two elements $(X,d_X,\nu_X),(Y,d_Y,\nu_Y) \in \mathbb{M}$ is defined by
\begin{align} \label{eq:dghp}
	d_{GHP}(X,Y) = \inf\left\{d_H^Z(\phi_1(X),\phi_2(Y)) + d_P^Z(\phi_1^\ast \nu_X,\phi_2^\ast \nu_Y)\right\}
\end{align}
where the infimum is taken over all measure preserving isometries $\phi_1: X \to Z$ and $\phi_2: Y \to Z$ and metric spaces $Z$. 
Here $d_H^Z$ is the Hausdorff distance between compact metric spaces in $Z$, $d_P^Z$ is the Prokhorov distance between measures on $Z$ and $\phi^\ast \nu$ denotes the pushforward of the measure $\nu$ by $\phi$.  
We refer to \cite[{Chapter~27}]{MR2459454} for more details. 
When we are only interested in the convergence of the metric spaces without the measures, we leave out the term in \eqref{eq:dghp} with the Prokhorov metric and refer to the metric simply as the Gromov--Hausdorff (GH) metric, denoted $d_{GH}$.

The following formulation of GHP-convergence may be useful in calculations and we will use it repeatedly in the following sections.
If $(X,d_X)$ and $(Y,d_Y)$ are metric spaces and $\epsilon>0$, a function $\phi_\epsilon: X \to Y$ is called an $\epsilon$-isometry if the following two conditions hold:
\begin{enumerate}
	\item (Almost isometry) $$\sup_{x_1,x_2\in X}|d_Y(\phi(x_1),\phi(x_2)) - d_X(x_1,x_2)| < \epsilon.$$
	\item (Almost surjective) For every $y \in Y$ there exists $x \in X$ such that $$d_Y(\phi(x),y) < \epsilon.$$
\end{enumerate}
If $(X_n,d_n,\nu_n)$ is a sequence of compact measured metric spaces which converge towards $(X,d,\nu)$ in the GHP topology then it is equivalent to the existence of  $\epsilon_n$-isometries $\phi_{\epsilon_n}: X_n \to X$, where $\epsilon_n \to 0$, such that
\begin{align} \label{eq:epsiloniso}
	d_H^X(\phi_{\epsilon_n}(X_n),X) \to 0 \quad \text{and} \quad  d_P^X(\phi_{\epsilon_n}^\ast \nu_n, \nu) \to 0
\end{align}
as $n\to \infty$. When the GH topology is considered, only the first convergence of these two is assumed.

In most of the applications we will consider \emph{pointed} metric spaces $(X,d_X,\rho_X)$ where $\rho_X$ is a distinguished element called  \emph{a point} or \emph{a root}. The GHP-distance is adapted to this by requiring that the isometries in \eqref{eq:dghp} and the $\epsilon_n$-isometries in \eqref{eq:epsiloniso}, send root to root. 
In this case we speak of the \emph{pointed} GHP-distance.
In the same vein, we will sometimes deal with metric spaces that carry some extra finite measure. In this case we just add a term in \eqref{eq:dghp} that corresponds to the Prokhorov distance computed between the extra measures and modify \eqref{eq:epsiloniso} accordingly.

\section{Construction in the discrete} \label{s:disc_construction}

In this section we define the discrete decorated trees and state the sufficient conditions on the decorations and the underlying trees for our main results to hold. 

\emph{A decoration} of size $k$ is a compact metric space $B$ equipped with a metric $d$. 
We distinguish a vertex $\rho$ and call it the \emph{internal root} of $B$ and furthermore, we label $k$ vertices (not necessarily different)  and call them the external roots of $B$. 
This labeling is described by a function $\ell: \{1,\ldots,k\} \to B$. 
Note that we explicitly allow external roots to coincide with the internal root.
 The space $B$ is equipped with a finite Borel measure $\nu$. 
 We will use the notation $(B,d,\rho,\ell,\nu)$ for a decoration. 
 In practice, $B$ will often be a finite graph, $d$ the graph metric, and $\nu$ the counting measure on the vertex set. 
 The labeled vertices will often be the boundary of $B$ in some sense, e.g.~the leaves of a tree or the vertices on the boundary of a planar map. 
\begin{figure}[t]
	\centering
	\begin{minipage}{\textwidth}
		\centering
		\includegraphics[width=0.55\linewidth]{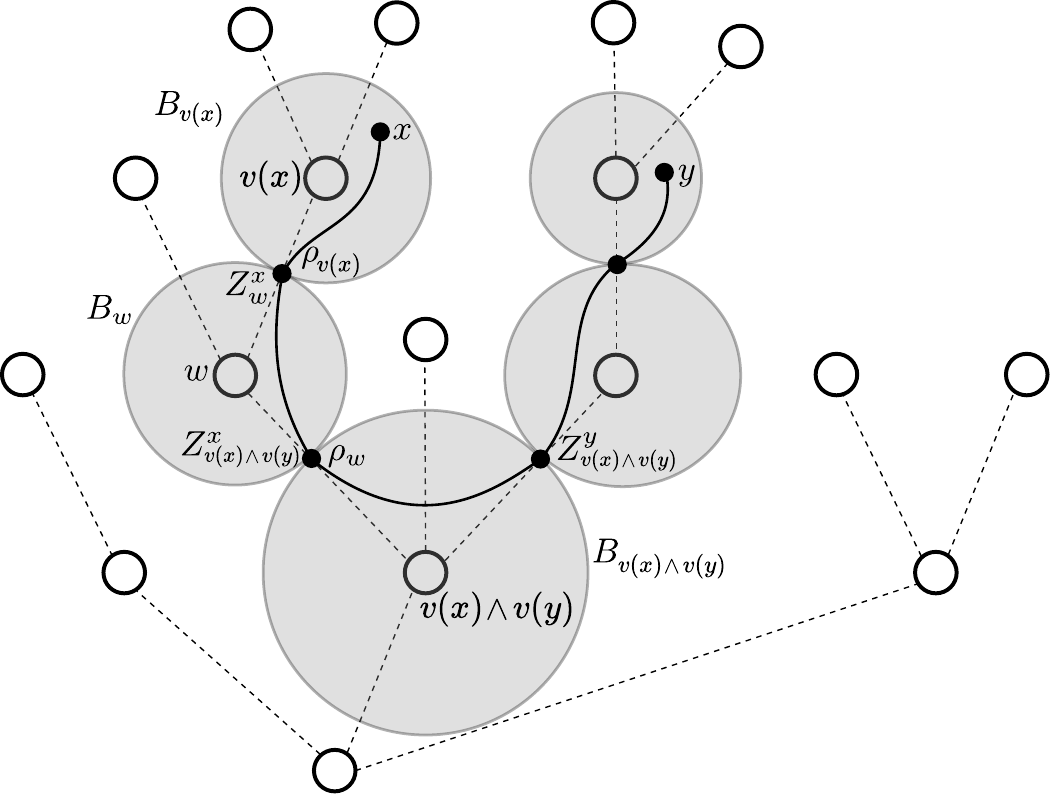}
		\caption{Definition of the metric $d_n^{\dec}$.}
		\label{f:metric}
	\end{minipage}
\end{figure}
Let $(T_n)_{n\geq 1}$ be a sequence of random trees and $(\tilde B_k,\tilde \rho_k,\tilde d_k,\tilde \ell_k,\tilde \nu_k)_{k\geq 0}$ a sequence of random decorations indexed by their size. 
Conditionally on $T_n$, for each $v \in T_n$, sample independently  a decoration $(B_{v},\rho_{v},d_{v},\ell_{v},\nu_{v})$ distributed as $(\tilde B_{\out{}(v)},\tilde \rho_{\out{}(v)},\tilde d_{\out{}(v)},\tilde \ell_{\out{}(v)},\tilde \nu_{\out{}(v)})$.  
We aim to glue the decorations to each other along the structure of $T_n$ which is informally explained as follows. 
Consider the disjoint union 
\begin{align*}
T_n^{\ast,\dec} = \bigsqcup_{v\in T_n} B_v
\end{align*}
of decorations on which is defined an equivalence relation $\sim$, such that for each $v$ in $T_n$, and each $1\leq i \leq \out{}(v)$ we have $\ell_v(i) \sim \rho_{vi}$. In other words, the $i$-th external root of $B_v$ is glued to the internal root of $B_{vi}$. The precise construction is given below.

We first define a pseudo-metric $d_n^{\ast,\dec}$ on $T_n^{\ast,\dec}$. 
For each $x \in T_n^{\ast,\dec}$ we let $v(x)$ denote the unique vertex in $T_n$ such that $x\in B_{v(x)}$. 
If $w$ is a vertex in $T_n$ such that $w \preceq v(x)$, let $Z_w^x \in B_w$ be defined by
\begin{align*}
Z_w^x = \begin{cases}
\ell_w(i) & \text{if $wi \preceq v(x)$,}\\
x & \text{if $w = v(x)$}.
\end{cases}
\end{align*}
In order to clarify the first case on the right hand side, note that if $w \prec v(x)$ then there is a unique child $wi$, $1 \le i \le \out{T_n}(w)$, of $w$ in $T_n$ such that $wi \preceq v(x)$, and we set $Z_w^x = \ell_w(i)$.

Now, let $x,y \in T_n^{\ast,\dec}$ and first assume that $v(x) \prec v(y)$ and set
\begin{align*}
d_n^{\ast,\dec}(x,y) &= d_n^{\ast,\dec}(y,x) = d_{v(x)}\left(x,Z_{v(x)}^{y}\right)+\sum_{v(x) \prec w \preceq y} d_w\left(\rho_w,Z_w^{y}\right).
\end{align*}
Otherwise, if $v(x)$ and $v(y)$ are not in an ancestral relation, set
\begin{align}\label{eq:definition distance dec n}
d_n^{\ast,\dec}(x,y) = d_{v(x)\wedge v(y)}\left(Z_{v(x)\wedge v(y)}^{x},Z_{v(x)\wedge v(y)}^{y}\right) + d_n^{\ast,\dec}\left(Z_{v(x)\wedge v(y)}^{x},x\right) + d_n^{\ast,\dec}\left(Z_{v(x)\wedge v(y)}^{y},y\right). 
\end{align}
These definitions are clarified in Fig.~\ref{f:metric}. Finally, the decorated tree is the quotient
\begin{align}
T_n^{\dec} = T_n^{\ast,\dec}\Big/\sim
\end{align}
where $x\sim y$ if and only if $d_n^{\ast,\dec}(x,y) = 0$, and the metric induced by $d_n^{\ast,\dec}$ on the quotient is denoted by $d_n^{\dec}$. 

Define a measure $\nu^\dec_n$, on $T_n^\dec$ by
\begin{align*}
\nu_n^\dec = \sum_{v\in T_n} \nu_v. 
\end{align*}
Here the mass of a gluing point is the sum of the masses of all vertices of its equivalence class.
Finally, root $T_n^\dec$ at $\rho_\varnothing$. We view the the decorated tree as a random, compact, rooted and measured metric space
\begin{align*}
(T_n^\dec,d_n^\dec,\rho_\varnothing, \nu_n^\dec).
\end{align*}

\subsection{Conditions on the trees and decorations} \label{ss:conditions}
Assume in this subsection that $\alpha \in (1,2)$ is fixed and that $(T_n)_n$ is a sequence of random trees with decorations  $(B_v,d_v,\rho_v,\ell_v,\nu_v)_{v\in T_n}$ sampled independently according to $(\tilde B_k,\tilde \rho_{k},\tilde d_{k},\tilde \ell_{k},\tilde \nu_{k})_{k\geq 0}$.  Let $\mu_k$ be a measure on $\tilde B_k$ defined by
\begin{align*}
\mu_k = \frac{1}{k}\sum_{j=1}^{k}\delta_{\tilde\ell_k(j)}.
\end{align*}
We let $v_1,\dots v_{|T_n|}$ be the vertices of $T_n$ listed in depth-first-search order and for all $1\leq k \leq |T_n|$, we let
\begin{align*}
	M_k=\sum_{i=1}^{k}\nu_{v_i}(B_{v_i})
\end{align*}
be the sum of all the total mass of the $k$ first decorations in depth-first order. We impose the following conditions on the decorations (D), the trees (T) and on both trees and decorations (B).
\subsubsection*{Conditions on the decorations} \label{sss:decorations}

\begin{enumerate}[\quad D1)]
	\item \label{c:GHPlimit} (\emph{Pointed GHP limit}). There are  $\beta,\gamma > 0$, a compact metric space $(\mathcal{B},d)$ with a point $\rho$,  a Borel probability measure $\mu$ and a finite Borel measure $\nu$ such that
	\begin{align*}
	\left(\tilde B_k,\tilde\rho_k,k^{-\gamma} \tilde d_k, \mu_k,k^{-\beta}\tilde \nu_k\right) \to (\mathcal{B},\rho,d,\mu,\nu)
	\end{align*}
	as $k\to \infty$, weakly in the pointed Gromov--Hausdorff--Prokhorov sense. 
%
%
	\item \label{c:moment_diam_limit} (\emph{Moments of the diameter of the limit}) The limiting block $(\mathcal{B},\rho,d,\mu,\nu)$ satisfies $\Ec{\diam(\mathcal{B})^p}<\infty$, for some $p>\frac{\alpha}{\gamma}$.
	
	\item \label{c:mass_limit} (\emph{Limit of the expected mass}) It holds that  $$k^{-\beta}\cdot \Ec{\tilde \nu_k(\tilde B_k)} \underset{k\rightarrow \infty }{\rightarrow} \Ec{\nu(\mathcal{B})} <\infty.$$

\end{enumerate}

\subsubsection*{Conditions on the trees}\label{sss:tree}

\begin{enumerate}[\quad T1)]
	\item \label{c:permute} (\emph{Symmetries of subtrees}) The distribution of $T_n$ is invariant under the permutation of any set of siblings.
	\item \label{c:invariance} (\emph{Invariance principle for the \L ukasiewicz path}) There is a sequence $b_n \ge 0$ such that  
	\[
		(b_n^{-1} W_{\lfloor|T_n|t\rfloor}(T_n))_{0 \le t \le 1} \convdis  X^{\text{exc}, (\alpha)}
	\]
	where $(W_k(T_n))_{0\leq k \leq |T_n|}$ is the \L ukasiewicz path of $T_n$ and $X^{\text{exc},(\alpha)}$ is an excursion of the spectrally positive $\alpha$-stable Lévy-process defined by \eqref{eq:levymeasure}.
\end{enumerate}

\subsubsection*{Conditions on both}\label{sss:both}
Let $\gamma$ and $b_n$ be as in conditions T and D in the previous sections.
\begin{enumerate}[\quad B1)]
	\item \label{cond:small decorations dont contribute to distances} (\emph{Small decorations do not contribute}). For any $\epsilon>0$, 
	\begin{align*}
	\Pp{\sup_{v\in T_n} \left\lbrace b_n^{-\gamma}\sum_{w\preceq v} \diam(B_w) \ind{\out{T_n}(w)\leq \delta b_n}\right\rbrace>\epsilon } \underset{\delta\rightarrow 0}{\rightarrow}0,
	\end{align*}
	uniformly in $n\geq 1$ for which $T_n$ is well-defined.
	\item \label{cond:measure is spread out} (\emph{Measure is spread out, case $\beta\leq \alpha$}). We assume that we have the following uniform convergence in probability, for some sequence $(a_n)$,
	\begin{align}\label{eq:convergence depth-first mass}
	\left(\frac{M_{\lfloor |T_n|t\rfloor}}{a_n}\right)_{t \in \intervalleff{0}{1}} \underset{n\rightarrow\infty}{\longrightarrow} (t)_{0  \leq t \leq 1}.
	\end{align} 
	\item \label{cond:small decorations dont contribute to mass} (\emph{Measure is on the blocks, case $\beta>\alpha$}). For any $\epsilon>0$ we have 
	\begin{align}
	\limsup_{n\rightarrow \infty} \Pp{b_n^{-\beta}\sum_{i=1}^{|T_n|}\nu_{v_i}(B_{v_i}) \ind{\out{T_n}(v_i)\leq \delta b_n} > \epsilon } \underset{\delta\rightarrow 0}{\longrightarrow} 0.
	\end{align} 
 In this case we write $a_n=b_n^\beta$. 
\end{enumerate}
\subsection{Reduction to the case of shuffled external roots} \label{ss:shuffle}
Given our Hypothesis T\ref{c:permute} on the tree $T_n$, we can always shuffle the external roots of the blocks that we use without changing the law of the object $T_n^\dec$ that we construct. 

More precisely, given the law of $(\tilde B_k,\tilde d_k,\tilde\rho_k,\tilde\ell_k,\tilde \nu_k)$ for any $k\geq 0$ we can define the block with shuffled external roots $(\hat B_k,\hat d_k,\hat\rho_k,\hat\ell_k,\hat\nu_k)$ as 
\begin{align*}
	(\hat B_k,\hat d_k,\hat\rho_k,\hat\nu_k)=(\tilde B_k,\tilde d_k,\tilde\rho_k,\tilde \nu_k) \quad \text{and} \quad \hat\ell_k(i)= \tilde\ell_k(\sigma(i)),
\end{align*}
where $\sigma$ is a uniform permutation of length $k$, independent of everything else. 
Then, assuming that Hypothesis~T\ref{c:permute} holds, the law of the object $\hat T_n^\dec$ constructed with underlying tree $T_n$ and decorations $(\hat B_k,\hat \rho_k,\hat d_k,\hat \ell_k,\hat \nu_k)_{k\geq 1}$ is the same as the that of the object $T_n^\dec$ constructed with underlying tree $T_n$ and decorations $(\tilde B_k,\tilde d_k,\tilde\rho_k,\tilde\ell_k,\tilde \nu_k)_{k\geq 0}$. 

We will require the following lemma which informally states that the sequence of shuffled external roots in a decoration $B_k$ converges weakly to an i.i.d. sequence of external roots in the GHP-limit $\cB$ of $B_k$

\begin{lemma} \label{l:shuffle}
	Let $(\tilde B_k,\tilde d_k,\tilde\rho_k,\tilde\ell_k,\tilde \nu_k)_{k\geq 1}$ be a (deterministic) sequence of decorations which converges to a decoration $(\cB,\rho,d,\mu,\nu)$ in the GHP-topology. Let $Y_1,Y_2,\ldots$ be a sequence of independent points in $\cB$ sampled according to $\mu$. Then there exists a sequence $\phi_k: \tilde B_k \to \cB$ of $\epsilon_k$-isometries, with $\epsilon_k \to 0$, such that
	\begin{align*}
		(\phi_k(\hat\ell_k(1)),\phi_k(\hat\ell_k(2))\ldots,\phi_k(\hat\ell_k(k)),\rho,\rho,\ldots) \convdis  (Y_1,Y_2,\ldots)
	\end{align*}
as $k\to \infty$.
\end{lemma}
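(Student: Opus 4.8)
The plan is to reduce the statement to the convergence of random probability measures, and then to exploit the fact that the empirical measure of the shuffled external roots is close to $\mu_k$, which by assumption converges to $\mu$. First I would fix $\epsilon_k$-isometries $\phi_k : \tilde B_k \to \cB$ witnessing the GHP-convergence $(\tilde B_k, \tilde\rho_k, \tilde d_k, \mu_k, \tilde\nu_k) \to (\cB, \rho, d, \mu, \nu)$; in particular $\phi_k^\ast \mu_k \to \mu$ in the Prokhorov distance on $\cB$ and $\phi_k(\tilde\rho_k) \to \rho$. Since $\hat\ell_k = \tilde\ell_k \circ \sigma_k$ with $\sigma_k$ a uniform permutation independent of everything, the points $\phi_k(\hat\ell_k(1)), \dots, \phi_k(\hat\ell_k(k))$ are obtained by sampling \emph{without replacement} from the finite multiset $\{\phi_k(\tilde\ell_k(j))\}_{j=1}^k$, whose empirical distribution is exactly $\phi_k^\ast\mu_k$.

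Next I would upgrade this to the claimed joint convergence in distribution. Because $\cB$ is a fixed compact (hence Polish) space, it suffices by the Portmanteau/Skorokhod theory to show that for each fixed $m$, the law of the first $m$ coordinates $(\phi_k(\hat\ell_k(1)), \dots, \phi_k(\hat\ell_k(m)))$ converges weakly on $\cB^m$ to $\mu^{\otimes m}$, and then observe that the $\rho,\rho,\dots$ tail coordinates converge deterministically to the constant $\rho$ (using $\phi_k(\tilde\rho_k) \to \rho$), so that the full sequence converges in $\cB^{\mathbb{N}}$ with the product topology. The key estimate is a quantitative de Finetti-type / sampling-without-replacement bound: for fixed $m$ and $k$ large, the total variation distance between the law of $m$ draws without replacement from a population of size $k$ and the law of $m$ i.i.d.\ draws from the population's empirical measure is $O(m^2/k) \to 0$. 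Combined with $\phi_k^\ast\mu_k \to \mu$ weakly, this gives $(\phi_k(\hat\ell_k(1)), \dots, \phi_k(\hat\ell_k(m))) \convdis \mu^{\otimes m}$. One has to be a little careful that $\mu_k$ (and hence $\phi_k^\ast \mu_k$) is genuinely a probability measure, which it is by its definition as $\frac1k \sum_j \delta_{\tilde\ell_k(j)}$.

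I would then assemble these pieces: fix a convergence-determining countable family of bounded continuous functions on $\cB$, check convergence of the relevant finite-dimensional expectations using the two ingredients above (sampling-without-replacement $\approx$ i.i.d., and $\phi_k^\ast\mu_k \to \mu$), and conclude convergence in distribution in the product space $\cB^{\mathbb N}$. A minor point to address is measurability/choice of the $\phi_k$: the GHP-convergence only asserts \emph{existence} of $\epsilon_k$-isometries, so I would simply fix one such choice for each $k$ (the sequence $(\tilde B_k)$ here is deterministic, so no measurable selection is needed), and note that the statement is about this particular choice.

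\textbf{Main obstacle.} The conceptual content is essentially this: an empirical distribution that converges weakly, when resampled without replacement, produces in the limit an i.i.d.\ sequence from the limiting law. The main technical point is making the passage from $\mu_k \to \mu$ (convergence of the ``population'' measures, through the moving spaces $\tilde B_k$ via the isometries $\phi_k$) to convergence of the laws of finite tuples of samples \emph{without} replacement — i.e.\ combining the $O(m^2/k)$ coupling between sampling without replacement and i.i.d.\ sampling with the weak convergence $\phi_k^\ast\mu_k \to \mu$, while keeping track of the distortion introduced by the $\epsilon_k$-isometries. None of these steps is deep, but care is needed to phrase everything on the single fixed space $\cB$ rather than on the varying spaces $\tilde B_k$, and to handle the infinite product topology cleanly.
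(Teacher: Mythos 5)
Your proposal is correct and follows essentially the same route as the paper: replace the shuffled (without-replacement) labels by i.i.d.\ samples from the empirical measure $\mu_k$ via a total-variation coupling, push forward through the $\epsilon_k$-isometries, and use $\phi_k^\ast\mu_k\to\mu$ to conclude convergence in the product topology. Your version is in fact slightly more careful, since you state the $O(m^2/k)$ total-variation bound only for fixed finite marginals (which is all that product-topology convergence requires), whereas the paper's proof asserts the vanishing of the total-variation distance for the full $k$-tuple, which is not literally true.
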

\begin{proof}
First note that 
	\begin{align*}
		\left\lVert \mu_k^{\otimes k},\mathcal{L}\big(\hat{\ell}_k\big)\right\rVert_{TV} \to 0,
	\end{align*}
	as $k\to \infty$, where $\mathcal{L}(\hat{\ell}_k)$ denotes the law of the uniformly shuffled labels $\hat{\ell}_k$. Thus, asymptotically, we may work with the measure $\mu_n^{\otimes k}$ instead of the shuffled labels. The GHP-convergence guarantees existence of $\epsilon_k$-isometries $\phi_k:\tilde B_k \to \cB$ with $\epsilon_k\to 0$, such that the pushforward $\phi_k^\ast \mu_k$ converges to $\mu$ in distribution. It follows that $\phi_k^\ast \mu_k^{\otimes \mathbb{N}} \to \mu^{\otimes \mathbb{N}}$ in distribution which concludes the proof.
	
\end{proof}

In the rest of the paper, we are always going to assume that the external roots of the decorations that we use have been shuffled as above. 
Note that there can be situations where there is a natural ordering of the external roots e.g.~when the decorations are oriented circles, maps with an oriented boundary, trees encoded by contour functions and more.

\subsection{The case of Bienaymé--Galton--Watson trees} \label{ss:BGW}
One important case of application is when the tree $T_n$ is taken to be a Bienaymé--Galton--Watson (BGW) tree in the domain of attraction of an $\alpha$-stable tree, conditioned on having $n$ vertices.
If we denote by $\mu$ a reproduction law on $\mathbb Z_+$, this corresponds to having $\sum_{i=0}^{\infty}i \mu(i)= 1$ and the function
\begin{align*}
	x\mapsto\frac{1}{\mu([x,\infty))}
\end{align*} 
to be $\alpha$-regularly varying, see the Appendix for a discussion on regularly varying functions.
Note that then $T_n$ is possibly not defined for all $n\geq 1$; in that case we only consider values of $n$ for which it is. 

Condition T1 is satisfied for any BGW tree and T2 holds, thanks to results of Duquesne and Le Gall \cite{MR1964956,MR2147221}, for a sequence $(b_n)_{n\geq 1}$ that depends on the tail behaviour of $\mu([n,\infty))$. 
In the rest of this section, we assume that $\alpha\in(1,2)$ and $\gamma> \alpha -1$ are fixed and that we are working with BGW trees with reproduction distribution $\mu$ as above and corresponding sequence $(b_n)_{n\geq 1}$.

The results below concern properties of the diameter and the measure on the decorations and they guarantee that the remaining conditions in Section~\ref{ss:conditions} are satisfied. Their proofs are given in the Appendix.
We first state a result that ensures that small decorations don't contribute to the macroscopic distances.

\begin{proposition}\label{prop:small blobs don't contribute}
	Under the following moment condition on the diameter of the decorations
	\begin{align*}
	 \sup_{k\geq 0} \Ec{\left(\frac{\diam(\tilde B_k)}{k^\gamma}\right)^m}<\infty,
	\end{align*}
	for some $m>\frac{4\alpha}{2\gamma +1 -\alpha}$, condition B1 is satisfied.
\end{proposition}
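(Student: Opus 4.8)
The plan is to verify condition B1 by combining a dyadic decomposition of the outdegrees with the standard quantitative picture of a conditioned stable Bienaymé--Galton--Watson tree. Three facts about $T_n$ enter: $b_n$ is regularly varying of index $1/\alpha$, so that $n$ and $b_n^\alpha$ agree up to slowly varying factors; the height of $T_n$ is of order $n/b_n$; and the number of vertices of outdegree at least $k$ is of order $n/k^\alpha$, while along the ancestral line of a vertex the outdegrees behave, asymptotically, like an i.i.d.\ sequence with the size-biased offspring law, so that a fixed root-to-vertex path carries of order $(n/b_n)(b_n/k)^{\alpha-1}$ ancestors of outdegree at least $k$. These are classical (see \cite{MR1964956,MR2147221}), and the sub-exponential tail estimates for the height and the \L{}ukasiewicz path of conditioned stable trees provide the uniformity in $n$. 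Note that the hypothesis $\sup_k\Ec{(\diam(\tilde B_k)/k^\gamma)^m}<\infty$ only says that a decoration carried by a vertex of outdegree $k$ has diameter of order at most $k^\gamma$; since this upper bound is in general attained, the crude estimate that bounds every diameter along a path of length $n/b_n$ by $(\delta b_n)^\gamma$ yields $\delta^\gamma n^{(\alpha-1)/\alpha}$, which diverges, so the path structure has to be used.

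First I would split $\sum_{w\preceq v}\diam(B_w)\ind{\out{T_n}(w)\le\delta b_n}\le\sum_{j:\,2^j\le 2\delta b_n}\Sigma_j(v)$ with $\Sigma_j(v)=\sum_{w\preceq v,\,2^{j-1}<\out{T_n}(w)\le 2^j}\diam(B_w)$, and bound $\Sigma_j(v)\le D_j N_j$, where $D_j$ is the maximal diameter of a decoration carried by a vertex of outdegree in $(2^{j-1},2^j]$ and $N_j=\sup_{v\in T_n}\#\{w\preceq v:2^{j-1}<\out{T_n}(w)\le 2^j\}$. The moment hypothesis, through Markov's inequality applied to the $O(n2^{-j\alpha})$ decorations of a given dyadic size, controls $D_j$: with high probability, simultaneously over the relevant $j$, $D_j\lesssim 2^{j\gamma}(n2^{-j\alpha})^{1/m}$. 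For $N_j$ I would prove a maximal version of the ancestral-line estimate above, namely that $N_j$ is, with high probability, at most a constant times $(n/b_n)2^{-j(\alpha-1)}$, up to the correction needed to upgrade the single-path bound to the supremum over all vertices; the degenerate scales $\out{T_n}(w)\in\{0,1\}$ contribute decorations of bounded diameter and are absorbed in the first block using the height estimate.

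Putting the pieces together, $b_n^{-\gamma}\sum_j D_jN_j$ is, with high probability, at most a constant times a geometric-type sum over $j$ whose general term behaves like $b_n^{-\gamma}(n/b_n)\,2^{j(\gamma+1-\alpha)}(n2^{-j\alpha})^{1/m}$. Since $\gamma>\alpha-1$ and the exponent $\gamma+1-\alpha$ remains positive after absorbing the loss from the $N_j$-estimate and from the $(n2^{-j\alpha})^{1/m}$ factor — this is precisely where the numerical condition $m>\frac{4\alpha}{2\gamma+1-\alpha}$ is needed — the sum is dominated by its top term $2^j\asymp\delta b_n$. Substituting this and using $n\asymp b_n^\alpha$, the powers of $b_n$ cancel and the bound reduces to a constant times a positive power of $\delta$, hence tends to $0$ as $\delta\to 0$ uniformly in $n$, which is condition B1.

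The main obstacle is the uniformity over all vertices $v\in T_n$, i.e.\ passing from a bound along the path to a typical vertex to a bound on the supremum. A direct union bound over the $O(n)$ leaves is too lossy, since the per-vertex tail furnished by an $m$-th moment hypothesis does not decay fast enough in $n$; instead one should exploit that the path-sums $Z_v=\sum_{w\preceq v}\diam(B_w)\ind{\out{T_n}(w)\le\delta b_n}$ are monotone along the tree and change only at branchpoints, which reduces the supremum to one over a sparser set, together with a maximal inequality for the \L{}ukasiewicz excursion of $T_n$ to control the number of large-outdegree vertices on any single path. Carrying this out so that every estimate is uniform in $n$, and tracking the exponents through the dyadic sum, is the technical heart of the argument, and it is what pins down the precise moment exponent in the statement.
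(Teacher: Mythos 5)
Your dyadic decomposition over degree scales and the Markov-inequality control of the maximal diameter $D_j$ at each scale match the general shape of the paper's argument, which likewise slices the outdegrees into ranges $(\delta 2^{-i-1}b_n,\delta 2^{-i}b_n]$ and sums a geometric bound over scales. But the step you yourself call ``the technical heart'' --- upgrading the ancestral-line estimate for a \emph{typical} vertex to a bound on the supremum over all $v\in T_n$, uniformly in $n$ --- is exactly the missing proof, and the substitute you sketch does not fill it. Monotonicity of $v\mapsto Z_v$ only reduces the supremum to a set of size still of order $n$, and a maximal inequality for the \L{}ukasiewicz excursion controls the excursion itself, not the number of large-outdegree ancestors along a worst-case path; with only an $m$-th moment on the decorations, no union bound over $\Theta(n)$ vertices closes. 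The paper's resolution is a many-to-one (spine) identity for \emph{marked} BGW trees conditioned on their total size (Proposition~\ref{prop:spine decomposition with marks}): the event that \emph{some} vertex is bad is rewritten as a sum over spine lengths of an expectation of $\ind{\cdot}$ \emph{divided by the number of bad vertices}, and since badness is hereditary this denominator contains the progeny of order $\delta b_n$ independent subtrees grafted at the first bad spine vertex. The three estimates of Lemma~\ref{lem:bounding every term in the sum of delta slices} --- a local limit theorem for the conditioning $\{|T|=n\}$, a Petrov/Rosenthal moment bound for the sum of marks along the spine, and the expected inverse progeny --- then give the per-slice bound $\delta^\beta$ of Lemma~\ref{lem:cutting slices}. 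Nothing in your sketch plays the role of this division by the number of bad descendants, which is what makes the supremum tractable.

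A more quantitative symptom of the gap: taking your stated bounds $D_j\lesssim 2^{j\gamma}(n2^{-j\alpha})^{1/m}$ and $N_j\lesssim (n/b_n)2^{-j(\alpha-1)}$ at face value and writing $2^j=\lambda b_n$, $n\asymp b_n^\alpha$, the general term is $\lambda^{\gamma+1-\alpha-\alpha/m}$, so your computation requires $m>\alpha/(\gamma+1-\alpha)$, not the stated $m>\frac{4\alpha}{2\gamma+1-\alpha}$; these thresholds are not comparable in general (compare $\alpha=3/2,\gamma=3/5$ with $\alpha=6/5,\gamma=9/10$). The paper's exponent $\frac{4\alpha}{2\gamma+1-\alpha}=\frac{2\alpha}{\gamma+(1-\alpha)/2}$ records precisely the losses of its spine argument: Petrov's inequality contributes a factor $(\#\{\text{scale-}j\text{ ancestors}\})^{m/2}$, turning $1-\alpha$ into $(1-\alpha)/2$, and two factors of order $\delta^{-\alpha}/n$ (one from the local limit theorem for $\{|T|=n\}$, one from the inverse progeny) account for the $2\alpha$. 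Asserting that your threshold survives ``after absorbing the loss from the $N_j$-estimate'' presupposes the very estimate that is missing, and the discrepancy in exponents shows those losses are not free.
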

\begin{remark}
	In \cite[Remark 1.5]{arXiv:2011.07266}, it is stated that in this context $b_n^{-\gamma}\cdot \diam(T_n^\dec)$ is tight 
	under an assumption that corresponds to having $m>\frac{\alpha}{\alpha -1}$ in the above statement. 
	Note that neither of those assumptions are optimal since $\frac{\alpha}{\alpha -1}$ and $\frac{4\alpha}{2\gamma -(\alpha-1)}$ are not always ordered in the same way for different values of $\alpha$ and $\gamma$. 
	In particular, our bound does not blow up for a fixed $\gamma$ and $\alpha\rightarrow 1$.
\end{remark}

Now we state another result that ensures that either condition B\ref{cond:measure is spread out} or B\ref{cond:small decorations dont contribute to mass} is satisfied in this setting. 
We let $D$ denote a random variable that has law given by the reproduction distribution $\mu$ of the $\BGW$ tree.

\begin{lemma}\label{lem:moment measure discrete block implies B2 or B3}
Assuming that D1 holds for some value $\beta>0$ and that
	\begin{align*}
	\sup_{k\geq 0} \Ec{\left(\frac{\nu_{k}(\tilde B_k)}{k^\beta}\right)}<\infty,
	\end{align*}
	then condition B\ref{cond:measure is spread out} or B\ref{cond:small decorations dont contribute to mass} is satisfied, depending on the value of $\beta$:
	\begin{itemize}
		\item If $\beta < \alpha$ then B\ref{cond:measure is spread out} holds with $a_n:=n \cdot \Ec{\nu_D(B_D)}$. 
		\item If $\beta=\alpha$ then B\ref{cond:measure is spread out} holds with
		\begin{equation}
			a_n:=\left\{
			\begin{aligned}
				&n \cdot \Ec{\nu_D(B_D)} \quad &\text{if}\quad  \Ec{D^\alpha} <\infty ,\\
		      &n \cdot \Ec{\nu(\cB)}\cdot \Ec{D^\alpha \ind{D\leq b_n}} \quad  &\text{if} \quad  \Ec{D^\alpha} =\infty,
			\end{aligned}
			\right.
		\end{equation}
		where in the second case, we further assume that $\sup_{k\geq 0} \Ec{\left(\frac{\nu_{k}(\tilde B_k)}{k^\beta}\right)^{1+\eta}}<\infty$ for some $\eta>0$.
		\item If $\beta > \alpha$ then B\ref{cond:small decorations dont contribute to mass} holds with $a_n = b_n^\beta$.
	\end{itemize}
\end{lemma}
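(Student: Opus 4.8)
The plan is to reduce the whole statement to two standard facts about the conditioned BGW tree $T_n$ and then carry out Markov/moment estimates separately in the regimes $\beta>\alpha$, $\beta<\alpha$, and $\beta=\alpha$. Throughout, recall that $|T_n|=n$; write $d_i=\out{T_n}(v_i)$ for the outdegree of $v_i$; note that, conditionally on $T_n$, the decorations carried by the vertices of any fixed outdegree $k$ are i.i.d.\ copies of $(\tilde B_k,\dots)$; and set $f(k):=\Ec{\tilde\nu_k(\tilde B_k)}$, so that the hypothesis gives $f(k)\le Ck^\beta$ for a constant $C$, while D3 gives $f(k)/k^\beta\to\Ec{\nu(\cB)}$. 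The two facts I will use — both classical, following from the cycle lemma identifying the \L ukasiewicz path of $T_n$ with a cyclically shifted conditioned random walk, together with the local limit theorem for offspring laws in the domain of attraction of an $\alpha$-stable law — are: (i) $\sup_n\sup_k\frac{1}{n\mu(k)}\Ec{\#\{i\le n:d_i=k\}}<\infty$; and (ii) for every $k$ and every $t\in[0,1]$, $\frac{1}{n}\#\{i\le\lfloor nt\rfloor:d_i=k\}\convp t\mu(k)$. I will also repeatedly use that, $\mu$ being $\alpha$-regularly varying with associated scaling sequence $b_n$, one has $n\,\Pp{D>b_n}\asymp 1$ and, by Karamata's theorem, $\Ec{D^s\ind{D\le x}}\sim c_s\,x^s\,\Pp{D>x}$ as $x\to\infty$ for every $s>\alpha$.

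\emph{Case $\beta>\alpha$, proving B3.} By Markov's inequality it is enough to bound $\Ec{\sum_{i:d_i\le\delta b_n}\nu_{v_i}(B_{v_i})}$ by $C\delta^{\beta-\alpha}b_n^\beta$ with $C$ uniform in $n$ and $\delta$. Replacing $\nu_{v_i}(B_{v_i})$ by its conditional mean $f(d_i)\le Cd_i^\beta$ and using (i),
\[
\Ec{\sum_{i:d_i\le\delta b_n}\nu_{v_i}(B_{v_i})}\;\le\;C\,n\sum_{k\le\delta b_n}k^\beta\mu(k)\;=\;C\,n\,\Ec{D^\beta\ind{D\le\delta b_n}}\;\asymp\;\delta^{\beta-\alpha}b_n^\beta\big(n\,\Pp{D>b_n}\big)\;\asymp\;\delta^{\beta-\alpha}b_n^\beta,
\]
by Karamata with $s=\beta>\alpha$ and slow variation. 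Letting $n\to\infty$ and then $\delta\to0$ gives B3, with the convention $a_n=b_n^\beta$.

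\emph{Case $\beta\le\alpha$, proving B2.} Since $k\mapsto M_k$ is non-decreasing and the candidate limit $t\mapsto t$ is continuous, it suffices — by the usual P\'olya--Dini upgrade of pointwise to uniform convergence for monotone functions, run along subsequences to pass from a.s.\ to in-probability statements — to prove $M_{\lfloor nt\rfloor}/a_n\convp t$ for each fixed $t$. Assume first $\Ec{D^\beta}<\infty$, which covers $\beta<\alpha$ (automatic in the domain of attraction of an $\alpha$-stable law) and $\beta=\alpha$ with $\Ec{D^\alpha}<\infty$. Fix $K$ and split $M_{\lfloor nt\rfloor}$ according to $d_i\le K$ or $d_i>K$. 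For each fixed $k\le K$ the sum $\sum_{i\le\lfloor nt\rfloor,\,d_i=k}\nu_{v_i}(B_{v_i})$ is, conditionally on $T_n$, a sum of $N:=\#\{i\le\lfloor nt\rfloor:d_i=k\}$ i.i.d.\ copies of $\tilde\nu_k(\tilde B_k)$ with $N\convp\infty$ by (ii); Khintchine's weak law of large numbers (only the first moment is needed, and that is what we have) gives $N^{-1}\sum\convp f(k)$, hence $n^{-1}\sum_{i\le\lfloor nt\rfloor,\,d_i=k}\nu_{v_i}(B_{v_i})\convp t\mu(k)f(k)$ by (ii). Summing over the finitely many $k\le K$ and letting $K\to\infty$ yields $t\sum_k\mu(k)f(k)=t\,\Ec{\nu_D(B_D)}$. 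The tail is controlled in mean: by (i), $\Ec{\sum_{i\le\lfloor nt\rfloor,\,d_i>K}\nu_{v_i}(B_{v_i})}\le Cn\sum_{k>K}k^\beta\mu(k)$, which, divided by $n$, tends to $0$ as $K\to\infty$ since $\Ec{D^\beta}<\infty$; a Markov bound makes this term $o_p(n)$ uniformly in $n$. A three-$\epsilon$ argument then gives $n^{-1}M_{\lfloor nt\rfloor}\convp t\,\Ec{\nu_D(B_D)}$, which is B2 with $a_n=n\,\Ec{\nu_D(B_D)}$.

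\emph{The delicate sub-case, $\beta=\alpha$ with $\Ec{D^\alpha}=\infty$, is where I expect the real work.} Here $M_{\lfloor nt\rfloor}$ is dominated by large-degree vertices and the effective truncation scale for the $d_i$ is $b_n$, not a constant; correspondingly $a_n=n\,\Ec{\nu(\cB)}\,\Ec{D^\alpha\ind{D\le b_n}}$, which exceeds $b_n^\alpha$ by a slowly varying factor tending to infinity. Write $M_{\lfloor nt\rfloor}=\sum_{i\le\lfloor nt\rfloor}f(d_i)+\big(M_{\lfloor nt\rfloor}-\sum_{i\le\lfloor nt\rfloor}f(d_i)\big)$. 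For the centered part, conditional independence and the von Bahr--Esseen inequality for the $(1+\eta)$-th moment ($1+\eta\le2$), together with the extra hypothesis $\sup_k\Ec{(\nu_k(\tilde B_k)/k^\beta)^{1+\eta}}<\infty$, give $\Ec{|M_{\lfloor nt\rfloor}-\sum_{i\le\lfloor nt\rfloor}f(d_i)|^{1+\eta}\mid T_n}\le C\sum_{i\le\lfloor nt\rfloor}d_i^{\alpha(1+\eta)}$; taking expectations and using (i) and Karamata with $s=\alpha(1+\eta)>\alpha$ bounds this by $Cb_n^{\alpha(1+\eta)}=o(a_n^{1+\eta})$, so the centered part is $o_p(a_n)$. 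For $\sum_{i\le\lfloor nt\rfloor}f(d_i)$, use D3 to write $f(k)=\Ec{\nu(\cB)}k^\alpha+o(k^\alpha)$ with $|f(k)|\le Ck^\alpha$: the $o(k^\alpha)$ part contributes $o\big(\sum_{i\le\lfloor nt\rfloor}d_i^\alpha\big)=o(a_n)$ (split at a constant $L$, let $L\to\infty$), and it remains to prove the analogous weak law $\big(n\,\Ec{D^\alpha\ind{D\le b_n}}\big)^{-1}\sum_{i\le\lfloor nt\rfloor}d_i^\alpha\convp t$; this is established just as above — mean $\asymp t\,n\,\Ec{D^\alpha\ind{D\le b_n}}$ by (i) and Karamata, concentration from a second-moment-after-truncation estimate (or again von Bahr--Esseen) exploiting that $\sum_i d_i^\alpha$ is spread over order-$n$ comparable summands. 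This yields B2 with the stated $a_n$, completing all three cases. The genuinely load-bearing points are the conditioned-BGW inputs (i)--(ii), which I treat as standard, and the slowly varying bookkeeping in this last sub-case, i.e.\ checking that truncation at $b_n$ simultaneously makes $\Ec{(\text{centered})^{1+\eta}}=o(a_n^{1+\eta})$ and $\Ec{\sum_i d_i^\alpha}\asymp a_n/\Ec{\nu(\cB)}$.
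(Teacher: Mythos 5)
Your architecture is genuinely different from the paper's. The paper reduces everything to an i.i.d.\ sum: via the Vervaat transform it realizes $(\out{T_n}(v_i),\nu_{v_i}(B_{v_i}))_{i\le n}$ as a cyclic shift of i.i.d.\ pairs conditioned only on $\sum_iX_i=-1$, removes that conditioning by an absolute-continuity/local-limit-theorem argument, and then, in the critical sub-case $\beta=\alpha$ with $\Ec{D^\alpha}=\infty$, computes the tail of the single variable $Z=\nu_D(B_D)$ (showing $\Pp{Z\ge x}\sim\Ec{\nu(\cB)}\Pp{D\ge x^{1/\alpha}}$, which is where the extra $(1+\eta)$-moment hypothesis enters) and quotes the standard weak law for i.i.d.\ sums with tail regularly varying of index $-1$. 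You instead stay inside the conditioned tree, decompose by degree classes, and treat the decorations conditionally on the tree. Your inputs (i)--(ii) are correct and are themselves proved by the same Vervaat/local-limit machinery, so this is a legitimate alternative route, and your treatment of $\beta>\alpha$, of $\beta<\alpha$, and of $\beta=\alpha$ with $\Ec{D^\alpha}<\infty$ is sound (your $\beta>\alpha$ argument is in fact more explicit than the paper's sketch).

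There is, however, a genuine gap in the critical sub-case. Writing $\Pp{D>x}=x^{-\alpha}L(x)$, your von Bahr--Esseen step gives $\Ec{|M_{\lfloor nt\rfloor}-\sum_if(d_i)|^{1+\eta}\mid T_n}\le C\sum_id_i^{\alpha(1+\eta)}$, and you then assert that taking expectations and applying (i) and Karamata bounds this by $Cb_n^{\alpha(1+\eta)}$. That is false: fact (i) only caps the degrees at $n$, not at $b_n$, so it yields $\Ec{\sum_{i\le n}d_i^{s}}\le Cn\,\Ec{D^{s}\ind{D\le n}}\asymp n^{1+s}\Pp{D>n}$ for $s=\alpha(1+\eta)$, i.e.\ order $n^{1+\alpha\eta}L(n)$, whereas $b_n^{\alpha(1+\eta)}\asymp n^{1+\eta}L(b_n)^{1+\eta}$; since $\alpha>1$ the former exceeds the latter by a positive power of $n$, and is in particular not $o(a_n^{1+\eta})$. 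The sum $\sum_id_i^{\alpha(1+\eta)}$ is dominated by the maximal degree, whose $\alpha(1+\eta)$-th moment is far larger than its typical size. The same defect affects your claim that $\Ec{\sum_{i\le\lfloor nt\rfloor}d_i^\alpha}\asymp t\,n\,\Ec{D^\alpha\ind{D\le b_n}}$: the truncation coming from (i) is at $n$, and $\Ec{D^\alpha\ind{D\le n}}/\Ec{D^\alpha\ind{D\le b_n}}$ need not tend to $1$ (it tends to $\alpha$ already for constant $L$), so a first-moment computation does not identify the correct normalization $a_n$. Both problems are repairable by first restricting to the event $\{\max_id_i\le Ab_n\}$, whose probability is at least $1-\epsilon(A)$ uniformly in $n$: on that event $\sum_id_i^{\alpha(1+\eta)}\le(Ab_n)^{\alpha\eta}\sum_id_i^\alpha$, and the required smallness reduces to $b_n^{\alpha\eta}=o(a_n^{\eta})$, which holds because $a_n/b_n^\alpha\to\infty$ exactly when $\Ec{D^\alpha}=\infty$; the weak law for $\sum_id_i^\alpha$ must then be run as a truncated second-moment argument rather than read off the unconditional mean. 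As written, though, this step of your proof fails.
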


\begin{remark}
	\label{re:remarkleaves}
	Both those results are stated for $\BGW$ trees conditioned on having exactly $n$ vertices. 
	In fact, it is possible to use arguments from \cite[Section~4, Section~5]{MR2946438} to deduce that the same results hold for $\BGW$ trees conditioned on having $\lfloor \mu(\{0\}) n\rfloor$ leaves. 
	This is actually the setting that we need for most of our applications.
\end{remark}

\section{Construction in the continuous} \label{s:cts_construction}

Let $(X^{\text{exc},(\alpha)}_t; t\in[0,1])$ be an excursion of an $\alpha$-stable spectrally positive L\'{e}vy process, with $\alpha\in(1,2)$ and denote the corresponding $\alpha$-stable tree by $\cT_\alpha$. 
Denote the projection from $\intervalleff01$ to $\cT_\alpha$ by $\mathbf{p}$. 
Recall the definition
\begin{align*}
I_s^t = \inf_{[s,t]} X^{\text{exc},(\alpha)}.
\end{align*}
We define a partial order $\preceq$ on $[0,1]$ as follows: for every $s,t\in[0,1]$ 
\begin{equation}\label{eq:def genealogical order continuous tree}
s\preceq t \quad \text{if} \quad s\leq t \quad \text{and} \quad X^{\text{exc},(\alpha)}_{s^{-}} \leq I_s^t.
\end{equation}
For $s,t\in[0,1]$ with $s\preceq t$, define
\begin{equation}\label{eq:definition xts}
x_s^t = I_s^t - X_{s^-}^{\text{exc}}
\end{equation}
and let
\begin{equation}\label{eq:definition uts}
u_s^t := \frac{x_s^t}{\Delta_s}
\end{equation}
where $\Delta_s = X^{\text{exc},(\alpha)}_s-X^{\text{exc},(\alpha)}_{s^-}$ is the jump of $X_s^{\text{exc},(\alpha)}$ at $s$. If $s$ is not a jump time of the process, we simply let $u_s^t:=0$. 

Let us denote $\Bb=\enstq{v\in \intervalleff{0}{1}}{\Delta_v>0}$, which is in one-to-one correspondence with the set of branch points of $\cT_\alpha$. Almost surely, these points in the tree all have an infinite degree, meaning that for any $v\in \Bb$, the space $\cT\setminus\{\mathbf{p}(v)\}$ has a countably infinite number of connected components. For any $v\in\Bb$, we let 
\begin{align*}
\cA_v:= \enstq{u_v^t}{t\in\intervalleff{0}{1} \quad \text{and} \quad \exists s\in \intervalleff{0}{1}, v\prec s \prec t }.
\end{align*} 
This set $\cA_v$ is in one-to-one correspondence with the connected components of $\cT\setminus\{\mathbf{p}(v)\}$ above $\mathbf{p}(v)$, meaning not the one containing the root. 

Conditionally on this, let $\left((\mathcal{B}_v,\rho_v,d_v,\mu_v,\nu_v)\right)_{v\in \Bb}$ be random measured metric spaces, sampled independently with the distribution of $(\mathcal{B},\rho,d,\mu,\nu)$ and for every $v\in \Bb$, let $(Y_{v,a})_{a\in\cA_v}$ be i.i.d. with law $\mu_v$. For $s \in [0,1]\setminus \Bb$, let $\mathcal{B}_s = \{s\}$. Define for every $v\in\Bb$ the rescaled distance 
\begin{equation*}
\delta_v=\Delta_v^\gamma \cdot d_v.
\end{equation*}
and for any $t\succeq v$: 
\begin{align}\label{eq:definition Zvt}
Z_v^t = \begin{cases} 
Y_{v,u_v^t} & \text{if} \quad u^t_v\in\cA_v,\\
 \rho_v \qquad &  \text{otherwise.}
\end{cases}
\end{align}
Now we are ready to define a pseudo-distance $d$ on the set $\cT^{\ast,\dec}_\alpha=\bigsqcup_{s \in [0,1]}\mathcal{B}_s$. Before doing that, let us define, for any $v,w\in\Bb$ such that $v\preceq w$, and any $x\in \mathcal{B}_w,$
\begin{align*}
Z_v^x =  \begin{cases} Z^w_v & \text{if } v\prec w\\
 x \qquad & \text{if } v=w.
\end{cases}
\end{align*}
We also extend the genealogical order on the whole space $\cT^{\ast,\dec}_\alpha$ by declaring that for any $v\in \Bb, x\in B_v$, we have $v\preceq x \preceq v$, (thus making it a preorder, instead of an order relation). We also extend the definition of $\wedge$ by saying that $a\wedge b$ is always chosen among $\intervalleff01$. 

Then we define $d_0(a,b)$ for $a\preceq b$:
\begin{align}\label{eq:def dist 0}
d_0(a,b)=d_0(b,a) = \underset{v\in\Bb}{\sum_{a\prec v\preceq b}}\delta_v(\rho_v,Z_v^b),
\end{align}
and for any $a,b\in\cT^{\ast,\dec}_\alpha$,
\begin{align*}
d(a,b)=d_0(a\wedge b,a)+d_0(a\wedge b, b) + \delta_{a\wedge b }(Z_{a\wedge b}^a,Z_{a\wedge b}^b),
\end{align*}
We say that two elements $a$ and $b$ are equivalent $a\sim b$ if $d(a,b)=0$. We quotient the space by the equivalence relation, $\cT^\dec_\alpha=\cT^{\ast,\dec}_\alpha/\sim$ and denote $\mathbf{p}^\dec$ the corresponding quotient map.

We can endow $\cT_\alpha^\dec$ with two types of measures:
\begin{itemize}
	\item If $\beta \leq \alpha$, then we define the measure $\upsilon_\beta^\dec$ on $\cT_\alpha^\dec$ as the push-forward of the Lebesgue measure on $\intervalleff{0}{1}\setminus \Bb$ through the projection map $\mathbf{p}^\dec:\cT^{\ast,\dec}_\alpha \rightarrow \cT_\alpha^\dec$ onto the quotient.
	\item If $\beta > \alpha$, we define the measure $\upsilon_\beta^\dec$ as (the push-forward through $\mathbf{p}^\dec$ of)
	\begin{align*}
		\sum_{t \in \Bb } \Delta_t^\beta \nu_t
	\end{align*}
	 which is almost surely a finite measure, provided that $\nu(\cB)$ has finite expectation, say. 
\end{itemize}

Note that so far, it is not clear that the construction above yields a compact metric space (or even a metric space), as the metric $d$ could in principle take arbitrarily large or even infinite values. 
This is handled by the work of Section~\ref{sec:self-similarity}.
In particular, a result that is important in our approach is the following, which ensures that under some moment assumption on the diameter of the decorations the distances in the whole object are dominated by the contributions of the decorations corresponding to large jumps. This will in particular be useful in the next section.
\begin{lemma} \label{l:smalldec}
Assume that $\gamma > \alpha-1$ and that condition D\ref{c:moment_diam_limit} is satisfied. 
Then, 
	\begin{align}
	\sup_{s\in \intervalleff{0}{1}} \left\lbrace\sum_{t\prec s} \Delta_t^\gamma \diam(\mathcal{B}_t) \ind{\Delta_t\leq \delta}\right\rbrace \rightarrow 0
	\end{align}
	 as $\delta\rightarrow 0$ in probability.
\end{lemma}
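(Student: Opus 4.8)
The plan is to control the supremum over $s \in [0,1]$ of $\sum_{t \prec s} \Delta_t^\gamma \diam(\mathcal{B}_t)\ind{\Delta_t \le \delta}$ by reducing it to a quantity that does not depend on $s$ and then applying a moment estimate together with the known structure of the jumps of $X^{\text{exc},(\alpha)}$. First I would observe that for any $s$, the ancestral line $\{t : t \prec s\}$ consists of jump times $v$ of the excursion together with the quantities $x_v^s \le \Delta_v$, and that $\sum_{t \prec s}\Delta_t = \He_s \le \diam(\cT_\alpha) \le \sup X^{\text{exc},(\alpha)} =: \overline X < \infty$ almost surely (here I am using that the sum of jumps along an ancestral line in the Łukasiewicz-type coding is bounded by the running supremum). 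Consequently the sum in question is bounded above by
\begin{align*}
\sum_{t \in \Bb} \Delta_t^\gamma \diam(\mathcal{B}_t)\ind{\Delta_t \le \delta},
\end{align*}
which is now an $s$-free random variable $S_\delta$, and it suffices to show $S_\delta \to 0$ in probability as $\delta \to 0$.

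Next I would estimate $S_\delta$ in expectation (conditionally on $X^{\text{exc},(\alpha)}$, so that the $\diam(\mathcal{B}_t)$ are i.i.d.\ copies of $\diam(\cB)$). By Hölder's inequality with the exponent $p > \alpha/\gamma$ from condition D\ref{c:moment_diam_limit}, writing $q = p/(p-1)$ for the conjugate exponent, one gets
\begin{align*}
\Ecsq{S_\delta}{X^{\text{exc},(\alpha)}} \le \Ec{\diam(\cB)^p}^{1/p}\left(\sum_{t \in \Bb}\Delta_t^{\gamma q}\ind{\Delta_t \le \delta}\right)^{1/q},
\end{align*}
provided $\gamma q \ge 1$; since $p > \alpha/\gamma$ and $\alpha \in (1,2)$, one checks $\gamma q = \gamma p/(p-1) > \alpha/(p-1)$, and because $\gamma > \alpha - 1$ forces $p$ can be taken with $\gamma q > 1$ (indeed $\gamma q \to \gamma$ as $p \to \infty$, so one must instead argue directly that the exponent $\gamma q$ exceeds $1$ for $p$ close enough to $\alpha/\gamma$, where $\gamma q \to \alpha/(\alpha/\gamma - 1) = \alpha\gamma/(\alpha-\gamma)$; this is $>1$ exactly when $\alpha\gamma > \alpha - \gamma$, i.e.\ $\gamma(\alpha+1) > \alpha$, which holds since $\gamma > \alpha - 1 \ge \alpha/2 > \alpha/(\alpha+1)$). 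Granting $\gamma q > 1$, the remaining step is to show $\sum_{t \in \Bb}\Delta_t^{\gamma q}\ind{\Delta_t \le \delta} \to 0$ as $\delta \to 0$: this follows because $\sum_{t \in \Bb}\Delta_t^{\gamma q} < \infty$ almost surely, which in turn holds since the jumps of $X^{\text{exc},(\alpha)}$ satisfy $\sum_t \Delta_t^r < \infty$ a.s.\ for any $r > \alpha$ (a standard fact, obtained by comparison with the Lévy process whose Lévy measure is $c\,x^{-1-\alpha}dx$ on $(0,\infty)$), and $\gamma q > \alpha$ must be verified; if $\gamma q \le \alpha$ one instead uses that $\sum_t \Delta_t^{\gamma q}\ind{\Delta_t \le \delta}$ is monotone in $\delta$ with a finite limit and dominated-convergence-type reasoning on the compensated sum. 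Combining, $\Ecsq{S_\delta}{X^{\text{exc},(\alpha)}} \to 0$ a.s., hence $S_\delta \to 0$ in probability by bounded convergence once one truncates $\overline X$, or directly by Markov's inequality after a further unconditional bound.

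The main obstacle I anticipate is the bookkeeping around the exponents: one must choose $p > \alpha/\gamma$ in condition D\ref{c:moment_diam_limit} and simultaneously guarantee that the conjugate-exponent sum $\sum_t \Delta_t^{\gamma q}$ converges, which requires $\gamma q$ to fall in the right range relative to $\alpha$ and uses the hypothesis $\gamma > \alpha - 1$ in an essential way — this is precisely the place where that hypothesis enters. A clean alternative, which I would actually prefer to carry out, is to avoid Hölder entirely: bound $\sum_{t \prec s}\Delta_t^\gamma\diam(\mathcal{B}_t)\ind{\Delta_t \le \delta} \le \delta^{\gamma - \gamma'}\sum_{t \prec s}\Delta_t^{\gamma'}\diam(\mathcal{B}_t)$ for any $0 < \gamma' < \gamma$, and show that $\sup_s \sum_{t \prec s}\Delta_t^{\gamma'}\diam(\mathcal{B}_t) < \infty$ almost surely for $\gamma'$ chosen with $\alpha - 1 < \gamma' < \gamma$; the supremum is finite because, after conditioning on the excursion, the sum over the (countable) ancestral line is a sum of independent terms with summable expectations — here one uses $\sum_{t \in \Bb}\Delta_t^{\gamma'} < \infty$ a.s.\ (valid since $\gamma' $ can be taken $> \alpha$... which it cannot when $\gamma \le \alpha$, so in that regime one again needs a first-moment bound via $\Ec{\diam(\cB)} < \infty$, implied by D\ref{c:moment_diam_limit}, together with $\Ec{\sum_t \Delta_t^{\gamma'} \ind{\Delta_t\le 1}} < \infty$ for $\gamma' > \alpha - 1$, which is the standard integrability $\int_0^1 x^{\gamma'} x^{-1-\alpha}\,dx < \infty$ against the stable Lévy measure). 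Then $\delta^{\gamma-\gamma'} \to 0$ gives the claim. This second route localizes the whole difficulty into the single classical estimate $\Ec{\sum_{t}\Delta_t^{\gamma'}\ind{\Delta_t \le 1}} < \infty$ for $\gamma' > \alpha - 1$, which is exactly the content of the standing assumption $\gamma > \alpha - 1$, and I would organize the proof around it.
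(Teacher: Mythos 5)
There is a genuine gap in both of the routes you sketch, and it sits exactly at the point where the lemma is hard. In your first route, the reduction of the supremum over $s$ to the $s$-free quantity $\sum_{t\in\Bb}\Delta_t^\gamma\diam(\cB_t)\ind{\Delta_t\le\delta}$ is fatal: summing over \emph{all} branchpoints rather than along a single ancestral line destroys finiteness. For the stable excursion one has $\sum_{t\in\Bb}\Delta_t^r<\infty$ if and only if $r>\alpha$, whereas $\gamma\le 1<\alpha$, so already $\Ec{\diam(\cB)}\sum_{t\in\Bb}\Delta_t^\gamma\ind{\Delta_t\le\delta}=\infty$ a.s.; and the H\"older exponent $\gamma q$ cannot be pushed above $\alpha$ in general (as $p\downarrow\alpha/\gamma$ one gets $\gamma q\to\alpha\gamma/(\alpha-\gamma)$, which exceeds $\alpha$ only when $\gamma>\alpha/2$, and $\gamma>\alpha-1$ does not imply $\gamma>\alpha/2$ for $\alpha<2$). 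In your second route, the key claim $\sup_{s}\sum_{t\prec s}\Delta_t^{\gamma'}\diam(\cB_t)<\infty$ is asserted from the summability of expectations along a \emph{fixed} ancestral line; that only gives a.s.\ finiteness for each fixed $s$ (and hence for countably many $s$), not for the supremum over the uncountable family of ancestral lines, which is the entire difficulty of the lemma. (Two further slips: the integrability criterion along an ancestral line is not $\int_0^1 x^{\gamma'}x^{-1-\alpha}\,dx<\infty$ — that requires $\gamma'>\alpha$ — but rather $\int_0^1 x^{\gamma'}\cdot x\cdot x^{-1-\alpha}\,dx<\infty$, the extra factor $x$ coming from the size-biased intensity of jumps seen from a spine, which is where $\gamma'>\alpha-1$ genuinely enters; and $\sum_{t\prec s}\Delta_t$ is not the height $\He_s$ nor bounded by $\sup X^{\mathrm{exc},(\alpha)}$, only $\sum_{t\prec s}x_t^s$ is.)

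Your underlying heuristic — that $\gamma>\alpha-1$ makes the expected contribution along one ancestral line finite — is correct, but upgrading fixed-$s$ control to uniform control is what the paper spends Section~6 on. The paper's proof of Lemma~\ref{l:smalldec} is a corollary of Theorem~\ref{thm:compact_hausdorff}: it builds the auxiliary decorated tree $\widehat{\cT}^\dec$ whose decorations are segments $\intervalleff{0}{\diam\cB}$, invokes compactness (proved via the self-similar spine decomposition of \cite{senizergues_growing_2022} under the standing moment assumption $p>\alpha/\gamma$), and then writes the supremum over all $s$ as a maximum over finitely many sampled leaves $U_1,\dots,U_n$ plus an error $\mathrm{d_H}(\widehat{\cT}^\dec,\widehat{\cT}^\dec_n)$ that is uniformly small by compactness; for each fixed $U_i$ the truncated sum then vanishes as $\delta\to 0$ by dominated convergence along that single ancestral line. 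If you want to salvage your second route, you would need to supply exactly this kind of uniform-over-$s$ input (compactness of the $\gamma'$-decorated tree, which in turn needs a moment of order $p'>\alpha/\gamma'$ that condition D\ref{c:moment_diam_limit} does not directly provide for $\gamma'<\gamma$), so the argument cannot be closed by the fixed-$s$ expectation computation alone.
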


\section{Invariance principle} \label{s:invariance}

In this section we state and prove one of the main results of the paper, namely that under the conditions stated in Section~\ref{ss:conditions}, the discrete decorated trees with a properly rescaled graph distance, converge towards $\alpha$-stable decorated trees. 
\begin{theorem} \label{th:invariance}
	Let $(T_n)_{n\geq 1}$ be a sequence of random trees and $(B_v,\rho_{v},d_{v},\ell_{v},\nu_{v})_{v\in T_n}$ be random decorations on $T_n$ sampled independently according to $(\tilde B_k,\tilde \rho_{k},\tilde d_{k},\tilde \ell_{k},\tilde \nu_{k})_{k\geq 0}$. Assume that conditions D,T and B in Section~\ref{ss:conditions} are satisfied for some exponents $\alpha, \beta,\gamma$ and denote the weak limit of $(\tilde B_k,\tilde \rho_{k},\tilde d_{k},\tilde \ell_{k},\tilde \nu_{k})_{k\geq 0}$  in the sense of D\ref{c:GHPlimit} by $(\mathcal{B},\rho,d,\mu,\nu)$.  Suppose that $\alpha-1<\gamma$ and let $\cT_\alpha^\dec$ be the $\alpha$-stable tree $\cT_\alpha$, decorated according to $(\mathcal{B},\rho,d,\mu,\nu)$ with distance exponent $\gamma$. Then
	\begin{align*}
	(T_n^\dec,b_n^{-\gamma}d_n^\dec, a_n^{-1}\cdot  \nu_n^\dec) \to (\mathcal{T}_\alpha^\dec, d, \upsilon_\beta^\dec)
	\end{align*}
	in distribution in the Gromov-Hausdorff-Prokhorov topology.
\end{theorem}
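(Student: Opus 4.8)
The plan is to establish the convergence by building a single coupling on which the discrete and continuous decorated trees can be compared directly, using the Łukasiewicz path as the bridge. First I would use the reduction of Section~\ref{ss:shuffle} to assume the external roots of all decorations are i.i.d.~shuffled, so that Lemma~\ref{l:shuffle} applies. By condition T\ref{c:invariance} and the Skorokhod representation theorem, I would realize $(b_n^{-1}W_{\lfloor |T_n| t\rfloor}(T_n))_{t\in[0,1]}$ and $X^{\mathrm{exc},(\alpha)}$ on a common probability space so that the convergence holds almost surely in $D([0,1],\mathbb{R})$. The key classical fact (Duquesne--Le~Gall) is that this convergence transfers to the height processes and hence gives GH convergence of $T_n$ (with graph metric rescaled by $b_n/|T_n|$ in the appropriate way) to $\cT_\alpha$; moreover the large jumps of the Łukasiewicz path converge to the jumps $\Delta_v$, $v\in\Bb$, of $X^{\mathrm{exc},(\alpha)}$, and the ``children subtree proportions'' at a big branchpoint converge to the $u_v^t$. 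On this coupling, a vertex of $T_n$ with outdegree $\sim \delta b_n$ corresponds in the limit to a branchpoint with $\Delta_v \sim \delta$, so the decoration rescalings $k^{-\gamma}$ match $\Delta_v^\gamma$.

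The core of the argument is then a truncation. Fix $\delta>0$ and keep only the decorations sitting on vertices $v$ with $\out{T_n}(v) > \delta b_n$ (in the discrete object) and branchpoints with $\Delta_v > \delta$ (in the continuous object); there are only finitely many of each, and on the coupling they are in bijection for $n$ large, with matching jump sizes, matching genealogical structure, and — by condition D\ref{c:GHPlimit} together with Lemma~\ref{l:shuffle} — matching decorations and attachment points in the pointed-GHP sense. For this truncated object I would construct an explicit $\epsilon_n$-isometry $\phi_n$ from the discrete truncated decorated tree to the continuous truncated one by mapping: (i) the part of $T_n$ carrying no big decoration into $\cT_\alpha$ via the GH $\epsilon$-isometry coming from the height-process convergence, and (ii) each big discrete decoration into the corresponding continuous decoration $\cB_v$ via the GHP $\epsilon$-isometry of D\ref{c:GHPlimit}, after the $\Delta_v^\gamma$ rescaling. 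The distance formulas \eqref{eq:definition distance dec n} and \eqref{eq:def dist 0}--onwards have exactly parallel structure (a sum of contributions along the ancestral line, plus a term at the meeting point inside one decoration), so checking that $\phi_n$ distorts distances by $o(1)$ reduces to: convergence of each individual decoration's contribution (finitely many, handled by D\ref{c:GHPlimit}), and convergence of the ``skeleton'' contributions (handled by the height-process convergence). For the measures, I would split $\nu_n^\dec$ into the mass on big decorations — which converges by D\ref{c:mass_limit} and the weak convergence of D\ref{c:GHPlimit}, after rescaling by $b_n^\beta$ — and the mass on small decorations, which in the regime $\beta\le\alpha$ is governed by condition B\ref{cond:measure is spread out} (so $a_n^{-1}\nu_n^\dec$ restricted to the skeleton converges to Lebesgue measure pushed to $\cT_\alpha^\dec$, i.e.~$\upsilon_\beta^\dec$) and in the regime $\beta>\alpha$ is negligible by B\ref{cond:small decorations dont contribute to mass} (so only the big-decoration mass $\sum \Delta_t^\beta \nu_t$ survives).

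Finally I would remove the truncation: by condition B\ref{cond:small decorations dont contribute to distances} the GH-distance between $T_n^\dec$ and its $\delta$-truncation is, uniformly in $n$, at most $\epsilon$ with probability tending to $1$ as $\delta\to 0$ (after rescaling by $b_n^{-\gamma}$), and by Lemma~\ref{l:smalldec} the same holds between $\cT_\alpha^\dec$ and its $\delta$-truncation; the analogous statement for the measures follows from B\ref{cond:measure is spread out}/B\ref{cond:small decorations dont contribute to mass} and the corresponding continuous facts. A standard three-epsilon / diagonal argument (choosing $\delta=\delta_k\to 0$ slowly and $n\to\infty$) then upgrades the truncated convergence to the full GHP convergence claimed.

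The main obstacle I anticipate is step (i)--(ii) of the coupling: making the discrete $\epsilon_n$-isometry genuinely compatible at the interfaces, i.e.~ensuring that the point where the skeleton map hands off to a decoration map (the internal root $\rho_v$ and the external roots $\ell_v(i)$, resp.~their limits $\rho$ and $Y_{v,a}$) is mapped consistently by both maps up to $o(1)$ error, simultaneously for all finitely many big decorations and for the skeleton joining them. This requires combining the height-process $\epsilon$-isometry with the finitely many decoration $\epsilon$-isometries into one global $\epsilon_n$-isometry, and controlling the accumulation of errors along ancestral chains — which is exactly where the uniform smallness from B\ref{cond:small decorations dont contribute to distances} and the finiteness of the truncated structure are essential. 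The measure-convergence bookkeeping in the boundary case $\beta=\alpha$, where $a_n$ may be a genuinely slowly varying perturbation of $n\,\Ec{\nu_D(B_D)}$, is a secondary technical point requiring care via Lemma~\ref{lem:moment measure discrete block implies B2 or B3}.
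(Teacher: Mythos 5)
Your plan follows the paper's proof in all essential respects: Skorokhod coupling of the \L ukasiewicz paths, coupling of the finitely many large decorations via D1 and of their gluing points via Lemma~\ref{l:shuffle}, a $\delta$-truncation keeping only vertices of outdegree $>\delta b_n$ (resp.\ branchpoints with $\Delta_v>\delta$), a patched $\epsilon$-isometry on the truncated objects, removal of the truncation via B1 and Lemma~\ref{l:smalldec}, and the same $\beta\le\alpha$ versus $\beta>\alpha$ dichotomy for the measures. The interface problem you identify as the main obstacle is exactly what the paper resolves with the auxiliary space $\widehat{T}_n^{\dec,\delta}$, the disjoint union of the large blocks with a $\delta$-penalty added between distinct blocks, which lets one patch the block $\epsilon$-isometries without worrying about points that get identified in the quotient.

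One ingredient of your plan is off, though it is also unnecessary: step~(i), mapping ``the part of $T_n$ carrying no big decoration into $\cT_\alpha$ via the GH $\epsilon$-isometry coming from the height-process convergence''. First, condition T2 only assumes convergence of the \L ukasiewicz path, not of the height process, and the paper never invokes the latter: the genealogical order on the large vertices and the quantities $u_v^t$ are read off the \L ukasiewicz path directly. Second, and more importantly, the skeleton of $\cT_\alpha^\dec$ does not carry (a multiple of) the metric of $\cT_\alpha$: by \eqref{eq:def dist 0}, distances in $\cT_\alpha^\dec$ are sums of contributions of the decorations only, so a Gromov--Hausdorff isometry between the rescaled $T_n$ and $\cT_\alpha$ says nothing about distances in the decorated objects, and using it to place the small-decoration part would not produce an almost isometry. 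The correct treatment, which your truncation-removal step already contains, is that everything outside the large blocks contributes at most $r_n^\delta$ (resp.\ $r^\delta$) to any distance and is simply discarded; no map on the skeleton is needed. With step~(i) deleted, your plan is the paper's proof.
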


\begin{proof} At first we will describe in detail the GH-convergence of the sequence of decorated trees, leaving out the measures $\nu_n^{\text{dec}}$, and then we give a less detailed outline of the GHP-convergence. 
We start by constructing, in several steps, a coupling that allows us to construct an $\epsilon$-isometry between $(T_n^\dec,b_n^{-\gamma} d_n^\dec)$ and $(\cT_\alpha^\dec,d)$ for arbitrary small $\epsilon$.

\emph{Step 1, coupling of trees:} By Condition T\ref{c:invariance}, we may use Skorokhod's representation theorem, and construct on the same probability space $X^{\text{exc},(\alpha)}$ and a  sequence of random trees $T_n$ such that 
	\[
(b_n^{-1} W_{\lfloor|T_n|t\rfloor}(T_n))_{0 \le t \le 1} {\,{\longrightarrow}\,} X^{\text{exc},(\alpha)}
\]
almost surely. 
We will keep the same notation for all random elements on this new space. 
Order the vertices $(v_{n,1},v_{n,2},\ldots)$ of $T_n$ in non-increasing order of their degree (in case of ties use some deterministic rule) and denote their positions in the lexicographical order on the vertices of $T_n$ by $(t_{n,1},t_{n,2},\ldots)$, in such a way that for any $k\geq 1$ we have $v_{t_{n,k}}=v_{n,k}$. 
Similarly, order the set $\Bb$ of jump times of $X^{\text{exc},\alpha}$ in decreasing order $(t_1,t_2,\ldots)$ of the sizes of the jumps (there is no tie almost surely).
Then, by properties of the Skorokhod topology, it holds that for any $k\geq 1$, 
\begin{align}\label{eq:convergence size and position of jumps}
\frac{t_{n,k}}{|T_n|} \to t_k \qquad \text{and} \qquad \frac{\out{}(v_{n,k})}{b_n} \rightarrow \Delta_{t_k}
\end{align}
almost surely, as $n\to \infty$.  
Because of the way we can retrieve the genealogical order from the coding function, the genealogical order on $(v_{n,1},v_{n,2},\dots, v_{n,N})$ for any fixed $N$ stabilizes for $n$ large enough to that of  $(t_{1},t_{2},\dots, t_{N})$ for the order $\preceq$ defined in \eqref{eq:def genealogical order continuous tree}. 

\emph{Step 2, coupling of decorations:} Let us shuffle the external roots of the decorations $(\tilde B_k,\tilde \rho_{k},\tilde d_{k},\tilde \ell_{k},\tilde \nu_{k})_{k\geq 0}$ with independent uniform permutations as described in Section~\ref{ss:shuffle} and denote the shuffled decorations by $(\hat B_k,\hat\rho_{k},\hat d_{k},\hat \ell_{k},\hat \nu_{k})_{k\geq 0}$.
From now on, sample $(B_v,\rho_{v},d_{v},\ell_{v},\nu_{v})_{v\in T_n}$ from the latter which, as explained before, does not affect the distribution of the decorated tree. 
Since for any $k\geq 1$, $\out{T_n}(v_{n,k}) \to \infty$ as $n\to \infty$, it holds, by Condition D\ref{c:GHPlimit}, that 
\begin{align*}
(B_{v_{n,k}},\rho_{v_{n,k}},(\out{}(v_{n,k}))^{-\gamma}d_{v_{n,k}},\mu_{v_{n,k}},(\out{}(v_{n,k}))^{-\beta}\nu_{v_{n,k}}) \to (\mathcal{B},\rho,d,\mu,\nu)
\end{align*} weakly as $n\to\infty$.  
We again use Skorokhod's theorem and modify the probability space such that for each $k\geq 1$, this convergence holds almost surely and the a.s.~limit will be denoted by $(\mathcal{B}_{t_k},\rho_{t_k},d_{t_k},\mu_{t_k}, \nu_{t_k})$. 
We still keep the same notation for random elements. 

For the rest of the proof, we will write $\mathbf d_v$ in place of  $b_n^{-1}\cdot d_{v}$ to lighten notation. 
From the convergence \eqref{eq:convergence size and position of jumps}, and the definition of $\delta_v=\Delta_v^\gamma \cdot d_v$, we can re-express the above convergence as the a.s.\ convergence 
\begin{align}\label{eq:convergence block unk to block vk}
(B_{v_{n,k}},\rho_{v_{n,k}}, \mathbf d_{v_{n,k}},\mu_{v_{n,k}},(b_n)^{-\beta}\nu_{v_{n,k}}) \to (\mathcal{B}_{t_k},\rho_{t_k},\delta_{t_k},\mu_{t_k},\Delta_{t_k}^\beta\nu_{t_k})
\end{align}

\emph{Step 3, coupling of gluing points:} 
Fix a $\delta > 0$ and let $N(\delta)$ be the (finite) number of vertices $v \in \Bb$ such that $\Delta_v > \delta$. 
Thanks to Step 1, we can almost surely consider $n$ large enough so that the genealogical order on $(v_{n,1},v_{n,2},\dots, v_{n,N(\delta)})$ induced by $T_n$ corresponds to that of  $(t_{1},t_{2},\dots, t_{N(\delta)})$. 
Note that $X^{\text{exc},(\alpha)}$ almost surely does not have a jump of size exactly $\delta$, and so for $n$ large enough the vertices $(v_{n,1},v_{n,2},\dots, v_{n,N(\delta)})$ are exactly the vertices with degree larger than $\delta b_n$. 
Now recall the notation \eqref{eq:definition uts} and for any $k\in \{1,2,\dots N(\delta)\}$, consider the finite set 
\begin{align*}
	\enstq{u_{t_k}^{t_i}}{i \in \{1,2,\dots N(\delta)\},\ t_i\succ t_k} \subset \mathcal A_{t_k}
\end{align*}
and enumerate its elements as $m_k(1), m_k(2), \dots , m_k(a_k)$ in increasing order.
Similarly consider the set
\begin{align*}
	\enstq{\ell \in \mathbb N}{v_{n,k}\ell \preceq v_{n,i} \text{ for some } i \in  \{1,2,\dots N(\delta)\}}\subset \{1,\dots, \out{}(v_{n,k})\}.
\end{align*}
and denote its elements $j_1,j_2,\dots, j_{a_k}$. Note that these two sets have the same cardinality $a_k$ when $n$ is large enough because of our assumptions.

For each $k\geq 1$, given $(B_{t_k},\rho_{t_k},d_{t_k},\mu_{t_k}, \nu_{t_k})$ and $X^{\text{exc},(\alpha)}$ the points $(Y_{t_k,a})_{a\in \mathcal{A}_{t_k}}$ are sampled independently according to $\mu_{t_k}$.  
%
By Lemma~\ref{l:shuffle} and \eqref{eq:convergence block unk to block vk} we may find a sequence $\phi_{v_{n,k}}: B_{v_{n,k}} \to \mathcal{B}_{t_k}$ of $\epsilon_{v_{n,k}}$-isometries, with $\epsilon_{v_{n,k}}\to 0$ as $n\to \infty$, such that 
\begin{align} \label{eq:coupling_glue}
(\phi_{v_{n,k}}(\ell_{v_{n,k}}(j_1)),\phi_{v_{n,k}}(\ell_{v_{n,k}}(j_2))\ldots,\phi_{v_{n,k}}(\ell_{v_{n,k}}(j_{a_k}))) {\,{\buildrel w \over \longrightarrow}\,} (Y_{t_k,m_k(1)},Y_{t_k,m_k(2)},\ldots,Y_{t_k,m_k(a_k)} ).
\end{align}
We modify the probability space once again, using Skorokhod's representation theorem, so that this convergence holds almost surely.
When $n$ is large enough, for $k,k'$ such that $v_{n,k} \prec v_{n,k'}$ we know that $v_{n,k} j_p \preceq v_{n,k'}$ and $u_{t_k}^{t_k'}=m_k(p)$ for some $p\in \{1,2,\dots, a_k\}$ so we have $Z_{v_{n,k}}^{v_{n,k'}}= \ell_{v_{n,k}}(j_p)$ as well as $Z_{t_k}^{t_{k'}}=Y_{t_k,m_k(a_p)}$.
Plugging this back in \eqref{eq:coupling_glue} that we have the convergence
\begin{align*}
	\phi_{v_{n,k}}(Z_{v_{n,k}}^{v_{n,k'}}) \rightarrow Z_{t_k}^{t_{k'}}.
\end{align*}

\emph{Step 4: convergence of the decorated trees}

Now, construct the sequence of decorated trees $T_n^\dec$ from $T_n$ and the decorations \\ $(B_{v_{n,k}},\rho_{v_{n,k}},d_{v_{n,k}},\ell_{v_{n,k}},\nu_{v_{n,k}})_{k\geq 1}$. Similarly, construct the tree  $\mathcal{T}_\alpha^\dec$ from $\mathcal{T}_\alpha$ and the family of decorations $(\mathcal{B}_{t_k},\rho_{t_k},d_{t_k},\mu_{t_k},\nu_{t_k})_{k\geq 1}$. Denote the projections to the quotients by $\mathbf{p}_n^\dec$ and $\mathbf{p}^\dec$ respectively. 
We claim that this coupling of trees, decorations and gluing points, guarantees that the convergence
	\begin{align*}
	(T_n^\dec,b_n^{-\gamma}d_n^\dec) \to (\mathcal{T}_\alpha^\dec, d)
\end{align*}
holds in probability for the Gromov--Hausdorff topology.

Introduce
\begin{align}
	r_n^\delta :=  b_n^{-\gamma} \cdot\sup_{v\in T_n} \left\lbrace\sum_{w\preceq v} \diam(B_w) \ind{\out{T_n}(w)\leq \delta b_n}\right\rbrace,
\end{align}
and also 
\begin{align}\label{def:r delta}
r^\delta := \sup_{s\in \intervalleff{0}{1}} \left\lbrace\sum_{t\prec s} \Delta_t^\gamma \diam(B_t) \ind{\Delta_t\leq \delta}\right\rbrace.
\end{align}
It holds, by condition B1 and Lemma~\ref{l:smalldec} that
\begin{align}
	\lim_{\delta \to 0} \limsup_{n\rightarrow\infty} r_n^\delta = 0  \quad \text{and} \quad \lim_{\delta\rightarrow 0} r^\delta =0
\end{align}
in probability. 

%
Let us consider the subset $\cT_\alpha^{\dec,\delta}\subset \cT_\alpha^\dec$ which consists only of the blocks corresponding to jumps larger than $\delta$, i.e.  
\begin{align}
\cT_\alpha^{\dec,\delta} = \bigsqcup_{1\leq k \leq N(\delta)}\mathbf{p}^\dec \left(\mathcal{B}_{t_k}\right),
\end{align}
 endowed with the induced metric $d$. 
 From the construction of $d$ it is clear that we can bound their Hausdorff distance
 \begin{align}\label{eq:T alpha well approximated by large jumps}
 	\mathrm{d_H}(\cT_\alpha^{\dec,\delta},\cT_\alpha^\dec) \leq 2 r^\delta.
 \end{align} 

 Let us do a similar thing with the discrete decorated trees.  
 We introduce $T_n^{\dec,\delta} \subset T_n^\dec$ endowed with the induced metric from $b_n^{-\gamma} \cdot d_n^\dec$, where
 \begin{align}
 	T_n^{\dec,\delta}=\bigsqcup_{1\leq k \leq N(\delta)} \mathbf{p}_n^\dec\left(B_{v_{n,k}}\right).
 \end{align}
We have, for the distance $b_n^{-\gamma} \cdot d_n^\dec$,
 	 \begin{align}
 	\mathrm{d_H}(T_n^{\dec,\delta},T_n^\dec) \leq 2 r_n^\delta .
 	\end{align}

We introduce the following auxiliary object
\begin{align*}
	\widehat{T}_n^{\dec,\delta}=\bigsqcup_{1\leq k \leq N(\delta)} B_{v_{n,k}},
\end{align*}
which we endow with the distance $\hat{d}$ defined as 
\begin{align*}
	\hat{d}(x,y)= b_n^{-\gamma}\cdot d_n^\dec(\mathbf{p}^\dec_n(x),\mathbf{p}^\dec_n(y))+\delta\cdot \ind{x,y \text{ don't belong to the same block}}. 
\end{align*}
It is easy to check that $\hat{d}$ is a distance on $\widehat{T}_n^{\dec,\delta}$ and that the projection $\mathbf{p}^\dec_n$ is a $\delta$-isometry, so that 
\begin{align*}
	\mathrm{d_{GH}}\left((\widehat{T}_n^{\dec,\delta}, \hat{d}),(T_n^{\dec,\delta},b_n^{-\gamma} \cdot d_n^\dec)\right) \leq \delta.
\end{align*}
The role of this auxilliary object is just technical: it allows to have a metric space very close to $T_n^\dec$ but where no two vertices of two different blocks are identified together. This allows to define function from $\widehat{T}_n^{\dec,\delta}$ to $\cT_\alpha^{\dec,\delta}$ by just patching together the almost isometries that we have on the blocks.

\emph{Step 5, constructing an almost isometry:}
Now, we construct a function from $\widehat{T}_n^{\dec,\delta}$ that we will show is an $\epsilon$-isometry for some small $\epsilon$ when $n$ is large enough. 
Recall the $\epsilon_{v_{n,k}}$-isometries $\phi_{v_{n,k}}$ from \eqref{eq:coupling_glue}.
We define the function $\phi$ from $\widehat{T}_n^{\dec,\delta}$ to $\cT_\alpha^{\dec}$ as the function such that for any $1\leq k \leq N(\delta)$ and any $x\in B_{v_{n,k}}$ we have
\begin{align*}
	\phi(x):= \mathbf{p}^\dec(\phi_{v_{n,k}} (x))
\end{align*}
Now let us show that for $n$ large enough $\phi$ is an $\epsilon$ isometry for 
\begin{align*}
	\epsilon=3r^\delta+3 r^{\delta}_{n}+ 3\delta.
\end{align*} 

First, from \eqref{eq:T alpha well approximated by large jumps} and the definition of $\phi$ from the $\phi_{v_{n,k}}$, it is clear that the $\phi$ satisfies the almost surjectivity condition for that value of $\epsilon$. 
Then we have to check the almost isometry condition. 
For that, let us take $n$ large enough so that on the realization that we consider, the genealogical order on $(v_{n,1},v_{n,2},\dots, v_{n,N(\delta)})$ in $T_n$ coincides with that on $(t_{1},t_{2},\dots, t_{N}(\delta))$ defined by \eqref{eq:def genealogical order continuous tree}.

Now for $k,k'\leq N(\delta)$ for which $t_k \prec t_{k'}$ (and equivalently $v_{n,k} \prec v_{n,k'}$) we have
\begin{align*}
\underset{1\leq i\leq N(\delta)}{\sum_{v_{n,k}\prec v_{n,i}\prec v_{n,k'}}} \mathbf d_{v_{n,i}}\left(\rho_{v_{n,i}},Z_{v_{n,i}}^{v_{n,k'}}\right)
&\leq \sum_{w\in T_n: v_{n,k} \prec w \prec v_{n,k'}} \mathbf d_w\left(\rho_w,Z_w^{v_{n,k'}}\right)
 \leq  \underset{1\leq i\leq N(\delta)}{\sum_{v_{n,k}\prec v_{n,i}\prec v_{n,k'}}} \mathbf d_{v_{n,i}}\left(\rho_{v_{n,i}},Z_{v_{n,i}}^{v_{n,k'}}\right) + r_n^\delta
\end{align*}
and similarly
\begin{align*}
 \underset{1\leq i\leq N(\delta)}{\sum_{t_{k}\prec t_{i}\prec t_{k'}}} \delta_{t_{i}}\left(\rho_{t_{i}},Z_{t_{i}}^{t_{k'}}\right)
&\leq \sum_{w\in \mathbb{B}: t_k \prec w \prec t_{k'}} \delta_w\left(\rho_w,Z_w^{t_{k'}}\right)
\leq  \underset{1\leq i\leq N(\delta)}{\sum_{t_{k}\prec t_{i}\prec t_{k'}}} \delta_{t_{i}}\left(\rho_{t_{i}},Z_{t_{i}}^{t_{k'}}\right) + r^\delta
\end{align*}
and so we have 
\begin{align*}
&|\sum_{w\in T_n: v_{n,k} \prec w \prec v_{n,k'}} \mathbf d_w\left(\rho_w,Z_w^{v_{n,k'}}\right)- \sum_{w\in \mathbb{B}: t_k \prec w \prec t_{k'}} \delta_w\left(\rho_w,Z_w^{t_{k'}}\right) | \\
&\leq \sum_{i: v_{n,k} \prec v_{n,i} \prec v_{n,k'}} |\mathbf d_{v_{n,i}}\left(\rho_{v_{n,i}},Z_{v_{n,i}}^{v_{n,k'}}\right)-  \delta_{t_i}\left(\rho_{t_i},Z_{t_i}^{t_{k'}}\right) | + r^\delta + r_n^\delta\\
&\leq \underset{\epsilon_n}{\underbrace{\sum_{k,k',i=1}^{N(\delta)} |\mathbf d_{v_{n,i}}\left(\rho_{v_{n,i}},Z_{v_{n,i}}^{v_{n,k'}}\right)-  \delta_{t_i}\left(\rho_{t_i},Z_{t_i}^{t_{k'}}\right) |}} + r^\delta + r_n^\delta
\end{align*}
and by \eqref{eq:convergence block unk to block vk}, the term $\epsilon_n$ in the above display tends to $0$ for a fixed $\delta$ as $n$ tends to infinity. In particular, it is eventually smaller than $\delta$. 
 
Now let us take two $x,x'\in \widehat{T}_n^{\dec,\delta}$, where $x\in B_{v_{n,k}}$ and $x'\in B_{v_{n,k'}}$. Let us study the different cases

$\bullet$ Case 1: assume that $k=k'$, then
\begin{align}
d(\phi(x),\phi(x')) = \delta_{t_k}(\phi_{v_{n,k}}(x),\phi_{v_{n,k}}(x')) \quad \text{and} \quad \hat d(x,x') =\mathbf d_{v_{n,k}}(x,x'). 
\end{align}
and then we have $|d(\phi(x),\phi(x'))  - \hat d(x,x')|\leq \epsilon_{v_{n,k}}$ by definition of $\phi_{v_{n,k}}$ being an $\epsilon_{v_{n,k}}$-isometry. For a fixed $\delta$, this quantity becomes smaller than $\delta$ for $n$ large enough. Since  $k\leq N(\delta) < \infty$ a.s., this holds uniformly in $k$.

$\bullet$ Case 2: assume that $t_k\prec t_{k'}$, then
\begin{align*}
d(\phi(x),\phi(x')) &= \delta_{t_{k}}(\phi(x),Z_{t_k}^{t_{k'}})+ \sum_{w\in \mathbb{B}: t_k \prec w \prec t_{k'}}\delta_w\left(\rho_w,Z_w^{t_{k'}}\right) +
 \delta_{t_{k'}}(\rho_{t_{k'}},\phi(x'))\\
 &= \delta_{t_{k}}(\phi_{u_{n,k}}(x),Z_{t_k}^{t_{k'}})+ \sum_{w\in \mathbb{B}: t_k \prec w \prec t_{k'}}\delta_w\left(\rho_w,Z_w^{t_{k'}}\right) +
 \delta_{t_{k'}}(\rho_{t_{k'}},\phi_{t_{k'}}(x'))
\end{align*}
and 
\begin{align*}
\hat d(x,x') = \mathbf d_{v_{n,k'}}(x,Z_{v_{n,k}}^{v_{n,k'}})+ \sum_{w\in T_n: v_{n,k} \prec w \prec v_{n,k'}}\mathbf d_w\left(\rho_w,Z_w^{v_{n,k'}}\right) +
\mathbf d_{v_{n,k'}}(\rho_{v_{n,k'}},x') + \delta.
\end{align*}
Hence we have 
\begin{align*}
	|d(\phi(x),\phi(x')) - \hat d(x,x')| \leq |\delta_{t_{k}}(\phi(x),Z_{t_k}^{t_{k'}}) -  \mathbf d_{v_{n,k'}}(x,Z_{v_{n,k}}^{v_{n,k'}}) |\\ + |\sum_{w\in \mathbb{B}: t_k \prec w \prec t_{k'}}\delta_w\left(\rho_w,Z_w^{t_{k'}}\right) - \sum_{w\in T_n: v_{n,k} \prec w \prec v_{n,k'}}\mathbf d_w\left(\rho_w,Z_w^{v_{n,k'}}\right)|\\ + |\delta_{t_{k'}}(\rho_{t_{k'}},\phi_{t_{k'}}(x'))-\mathbf d_{v_{n,k'}}(\rho_{v_{n,k'}},x')|
\end{align*}
and so, upper-bounding each term, the last display is eventually smaller than $\epsilon$ as $n\rightarrow \infty$. \\
$\bullet$ Case 3: assume that $t_k\npreceq t_{k'}$ and $ t_{k'}\npreceq t_k$. Then, we can write
\begin{align}
d(\phi(x),\phi(x'))= \delta_{t_k \wedge t_{k'}} (Z_{t_k \wedge t_{k'}}^{t_k},Z_{t_k \wedge t_{k'}}^{t_{k'}}) + \sum_{t_k \wedge t_{k'} \prec w \preceq t_{k'}}\delta_{w}(\rho_{w},Z_w^{t_{k'}})+\delta_{t_{k'}}(\rho_{t_{k'}},\phi(x))\\
+\sum_{t_k \wedge t_{k'} \prec w\preceq t_{k}}\delta_{w}(\rho_{w},Z_w^{t_{k}})+\delta_{t_k}(\rho_{t_{k}}, \phi(x')),
\end{align}
in the same fashion as before, and a similar expression can be written down for the discrete object.

Now, we can consider two cases: either $t_k \wedge t_{k'}=t_j$ for some $j\in \{1,\dots,N(\delta)\}$, in which case we can write 
\begin{align*}
	|\delta_{t_k \wedge t_{k'}} (Z_{t_k \wedge t_{k'}}^{t_k},Z_{t_k \wedge t_{k'}}^{t_{k'}}) - \mathbf d_{v_{n,k} \wedge v_{n,k'}} (Z_{v_{n,k} \wedge v_{n,k'}}^{v_{n,k}},Z_{v_{n,k} \wedge v_{n,k'}}^{v_{n,k'}})|\leq 2\epsilon_n, 
\end{align*}
which tends to $0$ as $n\rightarrow \infty$. Otherwise, it means that $\Delta_{t_k \wedge t_{k'}}\leq \delta$ and so we can upper-bound the term in the last display by $r^\delta + r_n^\delta$.

With the other terms, we can reason similarly as in the previous case and get 
\begin{align}
|\hat d(x,x') - d(\phi(x),\phi(x'))|\leq  3r^\delta+4\epsilon_n+3r^{\delta}_{n}+2\delta+ \epsilon_{v_{n,k}} + \epsilon_{v_{n,k'}},
\end{align}
which is smaller than $\epsilon$ for $n$ large enough.

\paragraph{Adding the measures.}
We sketch here a modification of the proof above to improve the convergence from Gromov--Hausdorff to Gromov--Hausdorff--Prokhorov topology.

\medskip

\emph{Step 1, the total mass converges:} First, remark that under the assumptions of the theorem with $\beta\leq \alpha$ the total mass $\nu_n^\dec(T_n^\dec)$ of the measure carried on $T_n^\dec$ satisfies 
\begin{align*}
	a_n^{-1} \cdot \nu_n^\dec(T_n^\dec) \underset{n\rightarrow \infty}{\rightarrow} 1.
\end{align*}
On the other hand, if $\beta>\alpha$, then using the coupling defined in the proof above, we claim that we have 
\begin{align}\label{eq:convergence total mass nu n}
	b_n^{-\beta}\cdot \nu_n^\dec(T_n^\dec)=b_n^{-\beta}\cdot\sum_{i=1}^{|T_n|} \nu_{v_i}(B_{v_i}) \underset{n\rightarrow \infty}{\rightarrow} \sum_{0\leq s\leq 1}\Delta_s^\beta \cdot \nu_s(\mathcal{B}_s).
\end{align}
Let us prove the above display.
First, using \eqref{eq:convergence size and position of jumps} and \eqref{eq:convergence block unk to block vk}, we can write for all $k\geq 1$,
\begin{align*}
	b_n^{-\beta}\cdot \nu_{v_{n,k}}(B_{v_{n,k}}) \underset{n\rightarrow \infty}{\rightarrow} \Delta_{t_k}^\beta\cdot \nu_{t_k}(\cB_{t_k}).
\end{align*}
This ensures that for any given $\delta>0$ we have 
\begin{align*}
b_n^{-\beta}\cdot \sum_{k=0}^\infty \nu_{v_{n,k}}(B_{v_{n,k}}) \ind{d^+(v_{n,k})> \delta b_n}\rightarrow\sum_{0\leq s\leq 1}\Delta_s^\beta \cdot \nu_s(\mathcal{B}_s)\ind{\Delta_s > \delta},
\end{align*}
and the quantity on the right-hand-side converges to $\sum_{0\leq s\leq 1}\Delta_s^\beta \cdot \nu_s(\mathcal{B}_s)$ as $\delta\rightarrow 0$.
From there, we can construct a sequence $\delta_n\rightarrow 0$ such that the following convergence holds on an event of probability $1$
\begin{align*}
b_n^{-\beta}\cdot \sum_{k=0}^\infty \nu_{v_{n,k}}(B_{v_{n,k}}) \ind{d^+(v_{n,k})\geq \delta_n b_n}\rightarrow\sum_{0\leq s\leq 1}\Delta_s^\beta \cdot \nu_s(\mathcal{B}_s).
\end{align*}
Now, using Condition~B\ref{cond:small decorations dont contribute to mass} we get that 
\begin{align*}
R(n,\delta)= b_n^{-\beta}\cdot \sum_{i=1}^{|T_n|}\nu_{v_i}(B_{v_i}) \ind{\out{}(v_i)\leq \delta b_n}
\end{align*}
tends to $0$ in probability as $\delta \rightarrow 0$, uniformly in $n$ large enough. 
This ensures that for our sequence $\delta_n$ tending to $0$ we have the convergence $R(n,\delta_n)\underset{n \rightarrow \infty}{\rightarrow} 0$ in probability. 
Putting things together we get that \eqref{eq:convergence total mass nu n} holds in probability. 


\medskip

\emph{Step 2, Convergence of the sampled points:}
Now that we know that the total mass of $\nu_n^\dec$ appropriately normalized converges to that of $\upsilon_\beta^\dec$, we consider a point $Y_n$ sampled under normalized version of $\nu_n^\dec$ and $\Upsilon_\beta$ sampled under a normalized version of $\upsilon_\beta^\dec$. 
In order to prove that $(T_n^\dec,d,\nu_n^\dec)$ converges to $(\cT_\alpha^\dec,d,\upsilon_\beta^\dec)$ in distribution, we will prove that the pointed
spaces $(T_n^\dec,d,Y_n)$ converge to $(\cT_\alpha^\dec,d,\Upsilon_\beta)$ in distribution for the pointed Gromov--Hausdorff topology.

\medskip

\emph{Step 3, Sampling a uniform point:} 
To construct $Y_n$ on $T_n^\dec$, we first sample a point $X_n$ on $\intervalleff{0}{1}$.
Then we introduce
\begin{align}
	K_n:=\inf \enstq{k\geq 1}{M_k\geq X_n \cdot M_{|T_n|}}
\end{align}
and then sample a point on $B_{v_{K_n}}$ using a normalized version of the measure $\nu_{v_{K_n}}$. 

For $\beta\leq \alpha$, in the continuous setting, we construct a random point $\Upsilon_\beta$ on $\cT_\alpha^\dec$ distributed as $\upsilon_\beta^\dec$ by defining it as $\mathbf{p}^\dec(X)$ where $X\sim \mathrm{Unif}\intervalleff{0}{1}$.

For $\beta>\alpha$ we can define the following random point $\Upsilon_\beta$ on $\cT_\alpha^\dec$ in three steps: sample $X\sim \mathrm{Unif}\intervalleff{0}{1}$ and then set
\begin{align*}
Z:=\inf \enstq{t\in \intervalleff{0}{1}}{\sum_{0\leq s\leq t}\Delta_s^\beta \nu_s(\mathcal{B}_s)\geq X\cdot \sum_{0\leq s\leq 1}\Delta_s^\beta\nu_s(\mathcal{B}_s)}
\end{align*}
and then sample a point on $\mathcal{B}_{Z}$ using a normalized version of the measure $\nu_{Z}$. (Note that in this case, $Z$ is a jump time of $X^{\mathrm{exc},(\alpha)}$ almost surely.)
This construction ensures that conditionally on $X^{\mathrm{exc},(\alpha)}$, and $(\mathcal{B}_s)_{s\in \Bb}$, for all $t\in\intervalleff{0}{1}$ we have
\begin{align*}
	\Pp{Z=t}=\frac{\Delta_t^\beta \cdot \nu_t(\mathcal{B}_t)}{\sum_{0\leq s\leq 1}\Delta_s^\beta \cdot \nu_s(\mathcal{B}_s)}.
\end{align*}

\emph{Step 4, Coupling $Y_n$ and $\Upsilon_\beta$:}
In both cases $\alpha \leq \beta$ and $\alpha >\beta$ respectively, we couple the construction of $Y_n$ and $\Upsilon$ by using the same uniform random variable $X_n=X$.
Now let us consider the two cases separately.

\emph{Case $\beta \leq \alpha$.}  Consider the convergence \eqref{eq:convergence depth-first mass}. 
Since this convergence is in probability to a constant, it happens jointly with other convergence results used above using Slutsky's lemma. 
Hence, using the Skorokhod embedding again, we can suppose that this convergence is almost sure, jointly with the other ones. 
Then, we can essentially go through the same proof as above and consider an extra decoration corresponding to $B_{v_{K_n}}$, which contains the random point $Y_n$.  

\emph{Case $\beta > \alpha$.} In that case, thanks to the reasoning of Step 1, we have the following convergence for the Skorokhod topology
\begin{align*}
b_n^{-\beta}\cdot\left(\sum_{i=1}^{\lfloor t |T_n|\rfloor} \nu_{v_i}(B_{v_i}) \right)_{0\leq t \leq 1} \underset{n\rightarrow \infty}{\rightarrow} \left(\sum_{0\leq s\leq t}\Delta_s^\beta \cdot \nu_s(\mathcal{B}_s)\right)_{0\leq t \leq 1}.
\end{align*}
With the definition of respectively $Z$ and $K_n$ using the coupling $X_n=X$, the above convergence ensures that for $n$ large enough we have $K_n=v_{n,k}$ and $Z=t_k$ for some (random) $k\geq 1$. 
Then again, we can essentially go through the same proof as above and consider an extra random point $Y_n$ sampled on $B_{v_{n,k}}$ and a point $\Upsilon_\beta$ sampled on $\cB_{t_{k}}$ in such a way that those points are close together.  

\end{proof}

\section{Self-similarity properties and alternative constructions}\label{sec:self-similarity}
The goal of this section is to decompose the decorated stable tree along a discrete tree, using a framework similar to that developed in Section~2.1, with the difference that the gluing will occur along the whole Ulam tree instead of a finite tree. 
In particular this decomposition, which follows directly from a similar description of the $\alpha$-stable tree itself, will allow us to describe the decorated stable tree as obtained by a sort of line-breaking construction, and also identify its law as the fixed point of some transformation. 
We rely on a similar construction that holds jointly for the $\alpha$-stable tree and its associated looptree described in \cite{senizergues_growing_2022}.

\textbf{In this section, we will always assume that $\beta>\alpha$ and that $\Ec{\nu(\cB)}$ and $\Ec{\diam(\cB)}$ are finite.}

\subsection{Introducing the spine and decorated spine distributions}
\begin{figure}\label{fig:spine and decorated spine}
	\centering
	\begin{tabular}{ccc}
		\includegraphics[height=6cm,page=1]{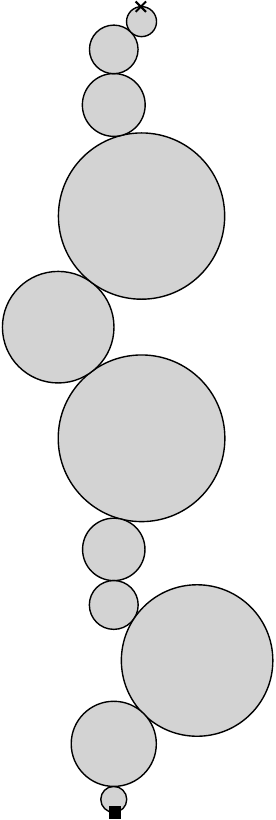} & & \includegraphics[height=6cm,page=2]{spineanddecoratedspine}
	\end{tabular}
	\caption{On the left, the space $\cS^{\dec}$. On the right, the corresponding space $\cS$ which is just a segment with atoms along it.}
\end{figure}
We construct two related random metric spaces, $\cS$ and $\cS^{\dec}$, which will be the building blocks used to construct $\cT_\alpha$ and $\cT^{\dec}_\alpha$ respectively. This construction will depend on parameters $\alpha\in(1,2)$ and $\gamma\in(\alpha-1,1]$ and $\beta>\alpha$.
First, let   
\begin{itemize}
	\item $(Y_k)_{k\geq 1}$ be a sequence of i.i.d. uniform random variables in $\intervalleff{0}{1}$,
	\item $(P_k)\sim \GEM(\alpha-1,\alpha-1)$, and $L$ its $(\alpha-1)$-diversity.
	\item $(\cB_k,d_k,\rho_k,\mu_k,\nu_k)$ i.i.d. random metric spaces with the same law as $(\cB,d,\rho,\mu,\nu)$,
	\item for every $k\geq 1$, a random point $Z_k$ taken on $\cB_k$ under $\mu_k$. 
\end{itemize} 
We refer for example to \cite[Appendix A.2]{senizergues_growing_2022} for the definitions of the distributions in the second bulletpoint. 
We first define the random spine \[\cS=(S,d_S,\rho_S,\mu_S)\] as the segment $S=\intervalleff{0}{L}$, rooted at $\rho_S=0$, 
endowed with the probability measure $\mu_S=\sum_{k\geq1}P_k \delta_{L\cdot Y_k}$ 
In order to construct $\cS^{\dec}$ we are going to informally replace every atom of $\mu_S$ with a metric space scaled by an amount that corresponds to the mass of that atom.
To do so, we consider the disjoint union
\begin{equation}
	\bigsqcup_{i=1}^\infty \cB_i,
\end{equation}
which we endow with the distance $d_{S^{\dec}}$ defined as
\begin{align*}
	d_{S^{\dec}}(x,y) &= \begin{cases}
		P_i^\gamma d_i(x,y) & \text{if} \quad x,y\in \cB_i,\\
		P_i^\gamma d_i(x,Z_i) + \sum_{k: Y_i<Y_k<Y_j}P_i^\gamma d_k(\rho_k,Z_k)+P_j^\gamma d_j(\rho_j,y) & \text{if} \quad x\in \cB_i,\ y\in \cB_j,\ Y_i<Y_j.
	\end{cases}
\end{align*}
Then $\cS^{\dec}$ is defined as the completion of $\bigsqcup_{i=1}^\infty \cB_i$ equipped with this distance. Its root $\rho$ can be obtained as a limit $\rho=\lim_{i\rightarrow\infty}\rho_{\sigma_i}$ for any sequence $(\sigma_i)_{i\geq 1}$ for which $Y_{\sigma_i}\rightarrow 0$. 
Essentially, $\cS^{\dec}$ looks like a skewer of decorations that are arranged along a line, in uniform random order. 
We can furthermore endow $\cS^{\dec}$ with a probability measure $\mu_{S^{\dec}}=\sum_{k=1}^{\infty}P_k \mu_k$.
If $\beta>\alpha$ and $\Ec{\nu(\cB)}$ is finite, we also define the measure $\nu_{S^{\dec}}=\sum_{k=1}^{\infty}P_k^\beta \nu_k$.
In the end, we have defined
\begin{align*}
	\cS^{\dec}=(S^{\dec},d_{S^{\dec}},\rho_{S^{\dec}},\mu_{S^{\dec}},\nu_{S^{\dec}}).
\end{align*}
It is important to note that conditionally on $\cS$, constructing $\cS^{\dec}$ consists in replacing every atom of $\mu_S$ by an independent copy of a random metric space appropriately rescaled. 
This exactly corresponds to what we are trying to do with the entire tree $\cT_\alpha$.

Up to now, it is not clear whether the last display is well defined though, because it is not clear whether the function $d_{S^{\dec}}$ only takes finite values.
The next lemma ensures that it is the case as long as $\diam(\cB)$ has a finite first moment.
Introduce the quantity $R=\sum_{i\geq 1} P_i^\gamma \diam(\cB_i)$. 
\begin{lemma}\label{lem:moment diam block implis moment diam spine}
	If $\Ec{\diam(\cB)^p}<\infty$ for some $p\geq 1$ then $\Ec{R^p}$ is finite so $\cS^{\dec}$ is almost surely a compact metric space with $\Ec{\diam(\cS^{\dec})^p}<\infty$. 
\end{lemma}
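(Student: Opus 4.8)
The plan is to prove the moment bound $\Ec{R^p} < \infty$ for $R = \sum_{i \geq 1} P_i^\gamma \diam(\cB_i)$, from which everything else follows easily: since $d_{S^{\dec}}(x,y) \leq 2R$ for all $x, y$ by the triangle-inequality structure of the definition, $R < \infty$ almost surely gives that $d_{S^{\dec}}$ is a genuine (finite) metric, $\diam(\cS^{\dec}) \leq 2R$ gives the diameter moment bound, and precompactness follows because $\cS^{\dec}$ is covered by the decorations $\cB_i$ (each compact) together with the fact that $\sum_{i > M} P_i^\gamma \diam(\cB_i) \to 0$ a.s.\ as $M \to \infty$, so finitely many of them suffice up to any $\varepsilon$; combined with completeness this yields compactness.

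The core estimate is $\Ec{R^p} < \infty$. First I would reduce to a statement purely about the sequence $(P_k) \sim \GEM(\alpha-1,\alpha-1)$: conditionally on $(P_k)$, the diameters $\diam(\cB_i)$ are i.i.d.\ copies of $\diam(\cB)$, independent of $(P_k)$. Writing $D_i$ for these, I want $\Ec{\big(\sum_i P_i^\gamma D_i\big)^p} < \infty$ where $D := \Ec{D_i^p} < \infty$ by hypothesis. I would split into the cases $p = 1$ and $p > 1$. For $p = 1$ it is immediate: $\Ec{R} = \Ec{D_1}\cdot \Ec{\sum_i P_i^\gamma}$, and $\Ec{\sum_i P_i^\gamma} = \sum_i \Ec{P_i^\gamma} < \infty$ because the $\GEM(\alpha-1,\alpha-1)$ weights in decreasing order are known to decay fast enough that $\sum_i P_i^\gamma$ has finite expectation precisely when $\gamma > \alpha - 1$ — this is where the standing assumption $\gamma \in (\alpha-1,1]$ enters, and one can cite the relevant moment computation for Poisson--Dirichlet / GEM weights (e.g.\ via the representation of $(P_k)$ in terms of a $(\alpha-1)$-stable subordinator, or directly from known formulas, as in \cite{senizergues_growing_2022}).

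For $p > 1$, I would use the following standard decomposition. Fix a conditional realization of $(P_k)$ and let $s = \sum_i P_i^\gamma$ (finite a.s., and in fact $\Ec{s^p} < \infty$ by the subordinator representation and the condition $p\gamma > \alpha$, or at least $s \in L^1$ which already suffices with a bit more care). Writing $R = \sum_i (P_i^\gamma) D_i$ as a sum of conditionally independent nonnegative terms with weights summing to $s$, I would apply a weighted Rosenthal- or Marcinkiewicz--Zygmund-type inequality, or more simply bound $\Ec{R^p \mid (P_k)} \leq C_p\big( s^p \,\Ec{D}^p + s^{p/?}\cdots\big)$; the cleanest route is: by convexity $R^p = s^p \big(\sum_i \frac{P_i^\gamma}{s} D_i\big)^p \leq s^p \sum_i \frac{P_i^\gamma}{s} D_i^p = s^{p-1}\sum_i P_i^\gamma D_i^p$, so $\Ec{R^p \mid (P_k)} \leq s^{p-1}\, \Ec{D_1^p}\, \sum_i P_i^\gamma = s^p\, \Ec{D_1^p}$, and hence $\Ec{R^p} \leq \Ec{D_1^p}\,\Ec{s^p}$. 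It therefore remains only to check $\Ec{s^p} = \Ec{\big(\sum_i P_i^\gamma\big)^p} < \infty$, which again follows from the $\GEM(\alpha-1,\alpha-1)$ structure: via the stick-breaking / subordinator description the law of $\sum_i P_i^\gamma$ has all moments finite once $\gamma > \alpha - 1$ (indeed it has a finite moment generating function in a neighborhood of $0$). I would cite the precise statement from \cite{senizergues_growing_2022} or \cite{senizergues_gluing_2019} rather than rederive it.

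The main obstacle is the last point — establishing that $\sum_i P_i^\gamma$ has a finite $p$-th moment for the $\GEM(\alpha-1,\alpha-1)$ weights. The convexity trick above is what makes the rest routine, but it shifts all the work onto this moment bound for the ranked weights, which is a genuine (though known) fact about Poisson--Dirichlet distributions and requires the hypothesis $\gamma > \alpha - 1$ in an essential way; the clean way to handle it is the representation $P_i = \Delta_i / \Sigma$ where $(\Delta_i)$ are the ranked jumps of an $(\alpha-1)$-stable subordinator on $[0, L]$ and $\Sigma$ its total, reducing the claim to $\Ec{(\sum_i \Delta_i^\gamma)^p} < \infty$, which holds because $\sum_i \Delta_i^\gamma$ is itself (up to the diversity normalization) a well-behaved functional of the subordinator with exponential moments when $\gamma > \alpha-1$.
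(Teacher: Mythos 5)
Your proposal is correct and follows essentially the same route as the paper: the paper's proof of the auxiliary lemma uses exactly your convexity bound $\left(\sum_i P_i^\gamma Z_i\right)^p \leq \left(\sum_i P_i^\gamma\right)^{p-1}\sum_i P_i^\gamma Z_i^p$, then reduces (by expanding $\left(\sum_i P_i^\gamma\right)^{\lceil p-1\rceil}$ into a multiple sum and using independence) to the finiteness of the integer moments of $\sum_i P_i^\gamma$ for $\GEM(\alpha-1,\alpha-1)$ weights with $\gamma>\alpha-1$, which it cites from Lemma~5.4 of \cite{goldschmidt_stable_2018} — the same known fact you defer to. The only cosmetic difference is that you condition on $(P_k)$ and invoke $\Ec{\left(\sum_i P_i^\gamma\right)^p}<\infty$ directly, while the paper handles the non-integer exponent by the bound $s^{p-1}\le s^{\lceil p-1\rceil}$ (valid since $s\ge 1$ when $\gamma\le 1$).
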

In order to get Lemma~\ref{lem:moment diam block implis moment diam spine} we will apply the following lemma, with $\xi=\theta=\alpha-1$ and $Z_i=\diam(\cB_i)$. 
\begin{lemma}
	Let $(Z_i)_{i\geq 1}$ be i.i.d. positive random variables with moment of order $p\geq 1$, and $(P_i)_{i\geq 1}$ an independent random variable with $\GEM(\xi,\theta)$ distribution, and $\gamma>\xi$. Then the random variable
	\begin{align*}
		\sum_{i=1}^{\infty}P_i^\gamma Z_i,
	\end{align*} 
	admits moments of order $p$.
\end{lemma}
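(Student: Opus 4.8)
The plan is to move the $L^p$ norm inside the series by Minkowski's inequality, use the independence of $(Z_i)_{i\ge1}$ and $(P_i)_{i\ge1}$ to factor out the $Z_i$'s, and thereby reduce the whole statement to a tail estimate for the moments of the size-biased $\GEM$ weights alone. Writing $\lVert X\rVert_p=\Ec{|X|^p}^{1/p}$ and $S_N=\sum_{i=1}^N P_i^\gamma Z_i$ (a nondecreasing sequence of nonnegative random variables), Minkowski's inequality together with independence gives
\[
\lVert S_N\rVert_p \;\le\; \sum_{i=1}^N \lVert P_i^\gamma Z_i\rVert_p \;=\; \sum_{i=1}^N \Ec{P_i^{\gamma p}}^{1/p}\,\Ec{Z_i^p}^{1/p} \;=\; \lVert Z_1\rVert_p \sum_{i=1}^N \Ec{P_i^{\gamma p}}^{1/p}.
\]
By monotone convergence $\Ec{\bigl(\sum_{i\ge1}P_i^\gamma Z_i\bigr)^p}=\lim_N\lVert S_N\rVert_p^p$, so it suffices to prove $\sum_{i\ge1}\Ec{P_i^{\gamma p}}^{1/p}<\infty$. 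Setting $q:=\gamma p$, note that $q\ge\gamma>\xi$ since $p\ge1$; the claim now involves only the law of $(P_i)_{i\ge1}$. We may assume $\xi\in(0,1)$, which is the relevant case (when $\xi=0$ the $\GEM$ weights decay geometrically and the series is trivially finite).

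Next I would use the stick-breaking representation of $\GEM(\xi,\theta)$: there are independent $W_j\sim\mathrm{Beta}(1-\xi,\theta+j\xi)$ with $P_1=W_1$ and $P_i=W_i\prod_{j=1}^{i-1}(1-W_j)$. Combining independence with the Beta moment identities $\Ec{W^q}=\Gamma(a+q)\Gamma(a+b)/(\Gamma(a)\Gamma(a+b+q))$ and $\Ec{(1-W)^q}=\Gamma(b+q)\Gamma(a+b)/(\Gamma(b)\Gamma(a+b+q))$ for $W\sim\mathrm{Beta}(a,b)$ yields the explicit formula
\[
\Ec{P_i^q} = \frac{\Gamma(1-\xi+q)}{\Gamma(1-\xi)}\cdot\frac{\Gamma(\theta+i\xi+1-\xi)}{\Gamma(\theta+i\xi+1-\xi+q)}\cdot\prod_{j=1}^{i-1}\frac{\Gamma(\theta+j\xi+q)\,\Gamma(\theta+j\xi+1-\xi)}{\Gamma(\theta+j\xi)\,\Gamma(\theta+j\xi+1-\xi+q)}.
\]
By log-convexity of $\Gamma$ one has $\Gamma(x)/\Gamma(x+q)\le x^{-q}$, so the prefactor is at most $C_1 i^{-q}$ for large $i$; the work is in the infinite product.

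The heart of the matter is the asymptotics of that product, and this is exactly where the hypothesis $\gamma>\xi$ is used. Writing $c_j=\theta+j\xi$ and $g(x)=\log\Gamma(x+q)-\log\Gamma(x)$, the $j$-th factor equals $\exp\bigl(g(c_j)-g(c_j+1-\xi)\bigr)$; the function $g$ is concave (as $g''=\psi'(\,\cdot+q)-\psi'<0$, with $\psi$ the digamma function) and satisfies $g'(x)=\psi(x+q)-\psi(x)\ge q/(x+q)$ (from $\psi'(t)\ge1/t$ and $\log(1+u)\ge u/(1+u)$), whence $g(c_j)-g(c_j+1-\xi)\le -(1-\xi)\,q/(c_j+1-\xi+q)$. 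Summing over $j\le i-1$ and comparing with an integral gives $\prod_{j=1}^{i-1}(\cdots)\le C_2\, i^{-q(1-\xi)/\xi}$, hence $\Ec{P_i^q}\le C\,i^{-q(1+(1-\xi)/\xi)}=C\,i^{-q/\xi}$ (the expected power-law decay $P_i\asymp i^{-1/\xi}$ of $\GEM(\xi,\theta)$). Therefore $\Ec{P_i^{\gamma p}}^{1/p}\le C^{1/p}\,i^{-\gamma/\xi}$, and $\sum_i i^{-\gamma/\xi}<\infty$ precisely because $\gamma/\xi>1$; combined with the first paragraph this proves $\Ec{\bigl(\sum_i P_i^\gamma Z_i\bigr)^p}<\infty$. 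I expect the only genuine obstacle to be this Gamma-function bookkeeping for the product: it is routine asymptotic analysis, but the constants must be tracked carefully enough to make the exponent $1/\xi$ appear and to confirm that $\gamma>\xi$ is exactly what makes the final series summable. Everything else — Minkowski's inequality, the factorization by independence, and monotone convergence — is standard.
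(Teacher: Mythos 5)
Your proof is correct, but it takes a genuinely different route from the paper's. The paper first uses a convexity (Jensen-type) step to bound $\left(\sum_i P_i^\gamma Z_i\right)^p$ by $\left(\sum_i P_i^\gamma\right)^{p-1}\left(\sum_i P_i^\gamma Z_i^p\right)$, then expands the resulting product of sums multinomially, factors out $\Ec{Z^p}$ by independence, and invokes an external result (Lemma~5.4 of Goldschmidt--Haas--S\'enizergues) for the finiteness of the joint moments $\sum \Ec{P_{i_1}^\gamma\cdots P_{i_n}^\gamma}$ when $\gamma>\xi$; it also splits into the cases $\gamma\le 1$ and $\gamma>1$. You instead apply Minkowski's inequality in $L^p$ to reduce everything to the single-index series $\sum_i \Ec{P_i^{\gamma p}}^{1/p}$, and then compute $\Ec{P_i^q}$ explicitly from the stick-breaking representation and Beta moments, extracting the decay $\Ec{P_i^q}\le C\,i^{-q/\xi}$; summability then follows exactly from $\gamma/\xi>1$. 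Your route is self-contained (no appeal to the external joint-moment lemma), handles all $\gamma>\xi$ uniformly without case-splitting, and yields the quantitative power-law decay of the $\GEM$ weights as a by-product; the paper's route avoids the Gamma-function asymptotics entirely by outsourcing them. The technical steps you flag as the only potential obstacles (the bound $\Gamma(x)/\Gamma(x+q)\le x^{-q}$, the concavity of $x\mapsto\log\Gamma(x+q)-\log\Gamma(x)$, and the lower bound $\psi(x+q)-\psi(x)\ge q/(x+q)$) are all valid, so the argument goes through.
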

Note that this lemma also guarantees that the measure $\nu_{S^{\dec}}=\sum_{k=1}^{\infty}P_k^\beta \nu_k$ is almost surely a finite measure, as soon as $\Ec{\nu(B)}$ is finite.

	\begin{proof}
		Suppose that $\xi<\gamma\leq 1$ and write 
		\begin{align*}
			\left(\sum_{i=1}^{\infty}P_i^\gamma Z_i\right)^p
			&=\left(\sum_{i=1}^{\infty}P_i^\gamma\right)^p \left(\sum_{i=1}^{\infty}\left(\frac{P_i^\gamma}{\sum_{i=1}^{\infty}P_i^\gamma}\right) Z_i\right)^p\\
			&\underset{\text{convexity}}{\leq} \left(\sum_{i=1}^{\infty}P_i^\gamma\right)^p \left(\sum_{i=1}^{\infty}\left(\frac{P_i^\gamma}{\sum_{i=1}^{\infty}P_i^\gamma}\right) Z_i^p\right)\\
			&\leq \left(\sum_{i=1}^{\infty}P_i^\gamma\right)^{p-1}\left(\sum_{i=1}^{\infty}P_i^\gamma Z_i^p\right)
		\end{align*}
		In the case $\gamma>1$, since all the $P_i$ are smaller than $1$, we can write
		\begin{align*}
			\left(\sum_{i=1}^{\infty}P_i^\gamma Z_i\right)^{p}&\leq \left(\sum_{i=1}^{\infty}P_i Z_i\right)^{p}\leq \left(\sum_{i=1}^{\infty}P_i Z_i^{p}\right),
		\end{align*}
		where the second inequality comes from convexity.
		Now, for any $\xi <\gamma\leq 1$, for $n=\lceil p-1 \rceil$ we have
		\begin{align*}
			\Ec{\left(\sum_{i=1}^{\infty}P_i^\gamma\right)^{p-1}\left(\sum_{i=1}^{\infty}P_i^\gamma Z_i^p\right)}
			&\leq \Ec{\left(\sum_{i=1}^{\infty}P_i^\gamma Z_i^p\right) \cdot \left(\sum_{i=1}^{\infty}P_i^\gamma\right)^{n}}\\
			&=\Ec{\sum_{(i_0,i_1,i_2,\dots,i_n)\in(\mathbb N^*)^{n+1}}P_{i_0}^\gamma\cdot  Z_{i_0}^p \cdot  P_{i_1}^\gamma\dots P_{i_n}^\gamma}\\
			&\leq	\sum_{(i_0,i_1,i_2,\dots,i_n)\in(\mathbb N^*)^{n+1}}\Ec{P_{i_1}^\gamma\dots P_{i_n}^\gamma} \cdot  \Ec{Z_{i_0}^p}\\
			&<\infty
		\end{align*}
		The last term is finite provided that $\gamma>\xi$, using Lemma~5.4 in \cite{goldschmidt_stable_2018}.
\end{proof}

\subsection{Description of an object as a decorated Ulam tree}\label{subsec:description of the objet as a gluing of spines}
Now that we have defined the random object $\cS^\dec$, we define some decorations of the Ulam tree that are constructed in such a way that up to some scaling factor, all the decorations are i.i.d.\@ with the same law as $\cS^\dec$. Gluing all those decorations along the structure of the tree, as defined in the introduction, will give us a random object $\tilde{\cT}^\dec_\alpha$ that has the same law as $\cT^\dec_\alpha$ (the proof of that fact will come later in the section).

In fact, in what follows, we provide two equivalent descriptions of the same object $\tilde{\cT}^\dec_\alpha$:
\begin{itemize}
	\item One of them is a description as a random self-similar metric space: this construction will ensure that the object that we construct is compact under the weakest possible moment assumption on the diameter of the decoration and also give us the Hausdorff dimension of the object.
	\item The other one is through an iterative gluing construction. It is that one that we use to identify the law of $\tilde{\cT}^\dec_\alpha$ with that of $\cT^\dec_\alpha$.
\end{itemize}
The fact that the two constructions yield the same object is a result from \cite{senizergues_growing_2022}. 
\subsubsection{Decorations on the Ulam tree}
We extend the framework introduced in the first section for finite trees to the entire Ulam tree, following \cite{senizergues_growing_2022}.
We work with families of decorations on the Ulam tree $\mathcal D=(\mathcal{D}(v), v\in \mathbb U)$ of measured metric spaces indexed by the vertices of $\mathbb U$.
Each of the decorations $\mathcal{D}(v)$ indexed by some vertex $v\in \mathbb U$ is rooted at some $\rho_v$ and carries a sequence of gluing points $(\ell_v(i))_{i\geq 1}$. The way to construct the associated decorated Ulam tree is to consider as before
\begin{align*}
	\bigsqcup_{v\in\mathbb U}\mathcal{D}(v)
\end{align*} 
and consider the metric gluing of those blocks obtained by the relations $\ell_v(i)\sim \rho_{vi}$ for all $v\in\mathbb U$ and $i\ge 1$. 
For topological considerations, we actually consider the metric completion of the obtained object. The final result is denoted $\sG(\cD)$. 

\paragraph{The completed Ulam tree.}
Recall the definition of the Ulam tree $\bU=\bigcup_{n\geq 0} \mathbb N^n$ with $\mathbb N=\{1,2,\dots\}$. 
Introduce the set $\partial\bU=\mathbb{N}^\mathbb{N}$, to which we refer as the \emph{leaves} of the Ulam tree, which we see as the infinite rays joining the root to infinity and set $\overline{\bU}:=\bU\cup \partial\bU$. 
On this set, we have a natural genealogical order $\preceq$ defined in such a way that $v\preceq w$ if and only if $v$ is a prefix of $w$. 
From this order relation we can define for any $v\in\bU$ the \emph{subtree descending from $v$} as the set $T(v):=\enstq{w\in \overline{\bU}}{v\preceq w}$. The collection of sets $\{T(v),\ v\in\bU\} \sqcup \{\{v\},\ v\in\bU\}$ generates a topology over $\overline{\bU}$, which can also be generated using an appropriate ultrametric distance.
Endowed with this distance, the set $\overline{\bU}$ is then a separable and complete metric space. 

\paragraph{Identification of the leaves.}
Suppose that $\cD$ is such that $\sG(\cD)$ is compact. 
We can define a map 
\begin{equation}\label{growing:eq:def de iota}
	\iota_\cD:\partial \bU\rightarrow \sG(\cD),
\end{equation}
that maps every leaf of the Ulam-Harris tree to a point of $\sG(\cD)$.
For any $\mathbf{i}=i_1i_2\dots \in\partial \bU$, the map is defined as 
\begin{align*}
	\iota_{\cD}(\mathbf{i}):= \lim_{n\rightarrow\infty} \rho_{i_1i_2\dots i_n} \in \sG(\cD),
\end{align*}
The fact that this map is well-defined and continuous is proved in \cite{senizergues_growing_2022}. 

\paragraph{Scaling of decorations.} In the rest of this section, for $\mathcal{X}=(X,d,\rho,\mu,\nu)$ a pointed metric space endowed with finite measures (as well as some extra structure like a sequence of points for example) we will use the notation
\begin{align*}
	\mathrm{Scale}(a,b,c;\mathcal{X}) = (X,a\cdot d,\rho,b \cdot \mu, c\cdot \nu),
\end{align*}
where the resulting object still carries any extra structure that $\mathcal{X}$ originally carried.
\subsubsection{A self-similar decoration}
We introduce the following random variables:
\begin{itemize}
	\item For every $v\in \mathbb{U}$, sample independently $(Q_{vi})_{i\geq 1}\sim \GEM(\frac{1}{\alpha},1-\frac{1}{\alpha})$ and denote $D_v$ the $\frac{1}{\alpha}$-diversity of $(Q_{vi})_{i\geq 1}$.
	\item This defines for every $v\in \mathbb{U}$,
	\begin{align*}
		\overline{Q_v}=\prod_{w\preceq v}Q_v \qquad \text{and} \qquad w_v=(\overline{Q_v})^\frac{1}{\alpha}\cdot D_v 
	\end{align*}
	This in fact also defines a probability measure $\eta$ on $\partial \mathbb U$ the frontier of the tree which is characterized by $\eta(T(v))=\overline{Q_v}$, for all $v\in \mathbb U$. 
	\item For every $v\in \mathbb{U}$ we sample independently of the rest $\cS^{\dec}_v$ that has the same law as $\cS^{\dec}=(S^{\dec},d_{S^{\dec}},\rho_{S^{\dec}},\mu_{S^{\dec}},\nu_{S^{\dec}})$ and sample a sequence of points from its measure $\mu_{S^{\dec}}$. 
	We call those points $(X_{vi})_{i\geq 1}$ and define $\ell_v:i\mapsto X_{vi}\in\cS^{\dec}_v $.  
	Then we can consider the following decorations on the Ulam tree as, for $v\in\mathbb{U}$,
	\begin{align}\label{eq:definition Tdec from GEM}
		\mathcal{D}(v):= \mathrm{Scale}\left(w_v^\gamma,w_v,w_v^\beta; \cS^{\dec}_v\right).
	\end{align}
\end{itemize}
From these decorations on the Ulam tree, we can define the corresponding decorated tree (and consider its metric completion) that we write as
\begin{align*}
	\tilde{\cT}^\dec_\alpha:= \sG(\cD).
\end{align*}
Note that the object defined above is not necessarily compact. 
Whenever the underlying block (and hence also the decorated spine thanks to Lemma~\ref{lem:moment diam block implis moment diam spine}) has a moment of order $p>\frac{\alpha}{\gamma}$, a result of \cite[Section~4.2]{senizergues_growing_2022} ensures that the obtained metric space $\tilde{\cT}^\dec_\alpha$ is almost surely compact. 

\paragraph{The uniform measure.}
Assume that the underlying block has a moment of order $p>\frac{\alpha}{\gamma}$ so that $\tilde{\cT}^\dec$ is almost surely compact.
Then the maps $\iota_\cD:\partial \bU \rightarrow \sG(\cD)$ is almost surely well-defined and continuous so we can consider the measure $(\iota_\cD)_*\eta$ on $\sG(\cD)$, the push-forward of the measure $\eta$. 
\paragraph{The block measure.}
If $\beta>\alpha$ and $\Ec{\nu(\cB)}<\infty$ then we can check that the total mass of the $\nu$ measures has finite expectation so it's almost surely finite. Hence we can endow $\tilde{\cT}^\dec_\alpha$ with the measure $\sum_{u}\nu_u$.

\subsubsection{The iterative gluing construction for $\tilde{\cT}^\dec_\alpha$.}
Before diving into the construction of $\tilde{\cT}^\dec_\alpha$, we define a family of time-inhomogeneous Markov chains called Mittag-Leffler Markov chains, first introduced by Goldschmidt and Haas \cite{goldschmidt_line_2015}, see also \cite{senizergues_growing_2022}. 
\paragraph{Mittag-Leffler Markov chains.}
Let $0<\eta<1$ and $\theta>-\eta$. The generalized Mittag-Leffler $\mathrm{ML}(\eta, \theta)$ distribution has $p$th moment
\begin{align}\label{growing:eq:moments mittag-leffler}
\frac{\Gamma(\theta) \Gamma(\theta/\eta + p)}{\Gamma(\theta/\eta) \Gamma(\theta + p \eta)}=\frac{\Gamma(\theta+1) \Gamma(\theta/\eta + p+1)}{\Gamma(\theta/\eta+1) \Gamma(\theta + p \eta+1)}
\end{align}
and the collection of $p$-th moments for $p \in \N$ uniquely characterizes this distribution. 

A Markov chain $(\mathsf M_n)_{n\geq 1}$ has the distribution $\MLMC(\eta,\theta)$ if for all $n\geq 1$,
\begin{equation*}
\mathsf M_n\sim \mathrm{ML}\left(\eta,\theta+n-1\right),
\end{equation*}
and its transition probabilities are characterized by the following equality in law
\begin{equation*}
\left(\mathsf M_n,\mathsf M_{n+1}\right)=\left(\beta_n\cdot \mathsf M_{n+1},\mathsf M_{n+1}\right),
\end{equation*}
where $\beta_n\sim \mathrm{Beta}\left(\frac{\theta+k-1}{\eta}+1,\frac{1}{\eta}-1\right)$ is independent of $\mathsf M_{n+1}$.

\paragraph{Construction of $\tilde{\cT}^\dec_\alpha$.}
We can now express our second construction of $\tilde{\cT}^\dec_\alpha$. 
We start with a sequence $(\mathsf{M}_k)_{k\geq 1}\sim \mathrm{MLMC}\left(\frac{1}{\alpha},1-\frac{1}{\alpha}\right)$.
Then we defined the sequence $(\mathsf{m}_k)_{k\geq 1}=(\mathsf{M}_k-\mathsf{M}_{k-1})_{k\geq 1}$ of increments of that chain, where we assume by convention that $\mathsf{M}_0=0$. 
From there, conditionally on $(\mathsf{M}_k)_{k\geq 1}$ we define an independent sequence $\mathcal{Y}_k$ of metric spaces such that
\begin{align*}
	\mathcal{Y}_k \overset{(d)}{=} \mathrm{Scale}\left(\mathsf{m}_k^\gamma, \mathsf{m}_k,\mathsf{m}_k^\beta; \cS^\dec\right)
\end{align*}
Then, we define a sequence of increasing subtrees of the Ulam tree as follows: we let $\mathtt{T}_1$ be the tree containing only one vertex $v_1=\emptyset$. 
Then if $\mathtt{T}_n$ is constructed, we sample a random number $K_{n+1}$ in $\{1,\dots,n\}$ such that conditionally on all the rest $\Pp{K_n=k}\propto\mathsf{m}_k$. Then, we add the next vertex $v_{n+1}$ to the tree as the right-most child of $v_{K_{n+1}}$.  
The sequence $(\mathtt{T}_n)_{n\geq 1}$ is said to have the distribution of a weighted recursive tree with sequence of weights $(\mathsf{m}_k)_{k\geq 1}$. 
A property of this sequence of trees is the fact that the $\enstq{v_k}{k\geq1}=\mathbb U$. Hence for any $v\in \mathbb U$ we denote $k_v$ the unique $k$ such that $v_k=v$. 
Then we consider the decorations on the Ulam tree defined by
\begin{align}\label{eq:definition Tdec from WRT}
	\widehat{\mathcal{D}}(v)=\mathcal{Y}_{k_v}.
\end{align}
In this setting, we can again define a measure $\widetilde{\eta}$ on the leaves $\partial\mathbb U$ of the Ulam tree by taking the weak limit of the uniform measure on $\{v_1,v_2,\dots v_n\}$ as $n\rightarrow\infty$, the almost sure existence of the limit being guaranteed by \cite[Theorem~1.7, Proposition~2.4]{senizergues_geometry_2021}.

Then, from  \cite[Proposition~3.2]{senizergues_growing_2022} we have 
\begin{align*}
	\left(\left(\mathsf{m}_{k_v}\right)_{v\in \mathbb U}, \widetilde{\eta}\right) \overset{\mathrm{(d)}}{=} \left(\left(w_{v}\right)_{v\in \mathbb U}, \eta\right).
\end{align*}
From that equality in distribution and the respective definitions \eqref{eq:definition Tdec from GEM} of $\mathcal{D}$ and \eqref{eq:definition Tdec from WRT} of $\widetilde{\mathcal{D}}$, it is clear that those two families of decorations have the same distribution.

\subsubsection{Strategy}
The rest of this section is about proving that the random decorated tree $\tilde{\cT}^\dec_\alpha$ that we constructed above has the same distribution as $\cT^\dec_\alpha$. 
For that we are going to characterize their ``finite-dimensional marginals'' and show that they are the same for the two processes:
Using the second description of $\tilde{\cT}^\dec_\alpha$, we can consider for any $k\geq 1$ the subset $\tilde{\cT}^\dec_k$ that corresponds to keeping only the blocks corresponding to $v_1,\dots, v_k$.
We compare this to $\cT^\dec_k$ which is the subset of $\cT^\dec_\alpha$ spanned by $k$ uniform leaves.

\subsection{Finite-dimensional marginals of $\cT_\alpha$ and $\cT^\dec_\alpha$}

\subsubsection{Approximating the decorated tree by finite dimensional marginals}
For any tuple of points $x_1,x_2,\dots x_k$ in $\intervalleff{0}{1}$, we can consider
\begin{align*}
	\mathrm{Span}(X^{\mathrm{exc},(\alpha)};x_1,x_2,\dots,x_k)=\enstq{x\in \intervalleff{0}{1}}{x\preceq x_i, \text{ for some } i\in\{1,2,\dots k\}},
\end{align*}
using the definition of $\preceq$ derived from $X^{\mathrm{exc},(\alpha)}$. 
We further define
\begin{align*}
	\mathrm{Span}(X^{\mathrm{exc},(\alpha)},(\cB_t)_{t\in \intervalleff{0}{1}};x_1,x_2,\dots,x_k)=\bigsqcup_{t\in \mathrm{Span}(X^{\mathrm{exc},(\alpha)};x_1,x_2,\dots,x_k)} \cB_t.
\end{align*}
Using independent uniform random points $(U_i)_{i\geq 1}$ on $[0,1]$, we define
\begin{align*}
	\cT_k=\mathbf{p} \left(\mathrm{Span}(X^{\mathrm{exc},(\alpha)};U_1,U_2,\dots,U_k)\right)
\end{align*}
 and 
\begin{align*}
	 \cT^\dec_k=\mathbf{p}^\dec\left( \mathrm{Span}(X^{\mathrm{exc},(\alpha)},(\cB_t)_{t\in \intervalleff{0}{1}};U_1,U_2,\dots,U_k) \right),
\end{align*}
where $\mathbf{p}:\intervalleff{0}{1} \rightarrow \cT_\alpha$ and $\mathbf{p}^\dec:\sqcup_{t\in \intervalleff{0}{1}} \cB_t \rightarrow \cT_\alpha^\dec$ are the respective quotient maps. 
We say that $\cT_k$ (resp. $\cT^\dec_k$) is the discrete random finite-dimensional marginal of $\cT_\alpha$ (resp. $\cT^\dec_\alpha$), following the standard definition from trees, introduced by Aldous.
\begin{remark}
	Note that for $\cT^\dec_k$ to be almost surely well-defined, we don't need the whole decorated tree $\cT^\dec_\alpha$ to be well-defined as a compact measured metric space. 
	In fact, it is easy to check that if the tail of $\diam(\cB)$ is such that $\Pp{\diam(\cB)\geq x}\sim x^{-p}$ with $1 < p <\frac{\alpha }{\gamma}$, then $\sup_{v \in \mathbb B} \Delta_v^\gamma \cdot \diam(\cB_v)=\infty$, even though the distance of a random point to the root in $\cT^\dec_\alpha$ is almost surely finite. 
\end{remark}

\begin{lemma}
	Identifying $\cB_v$ for any $v\in \mathbb B$ as a subset of $\cT^\dec_\alpha$ we almost surely have 
	\begin{align*}
		\bigcup_{v\in \mathbb B}\cB_v \subset \overline{\bigcup_{n\geq 0}\cT^\dec_n}.
	\end{align*}
\end{lemma}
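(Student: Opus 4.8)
The plan is to prove the slightly stronger inclusion $\bigcup_{v\in\mathbb B}\cB_v\subset \bigcup_{n\geq 0}\cT^\dec_n$, which trivially implies the stated one (and note that $\cT^\dec_n$ is nondecreasing in $n$, since adding sample points only enlarges the span). The key point is that the decorated span $\mathrm{Span}(X^{\mathrm{exc},(\alpha)},(\cB_t)_{t};U_1,\dots,U_n)=\bigsqcup_{t\in\mathrm{Span}(X^{\mathrm{exc},(\alpha)};U_1,\dots,U_n)}\cB_t$ contains the \emph{whole} block $\cB_v$ as soon as the branchpoint time $v$ lies in $\mathrm{Span}(X^{\mathrm{exc},(\alpha)};U_1,\dots,U_n)$, i.e.\ as soon as $v\preceq U_i$ for some $i\le n$; applying $\mathbf{p}^\dec$ then yields $\cB_v\subset\cT^\dec_n$ as subsets of $\cT^\dec_\alpha$. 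So it suffices to show that, almost surely, for every $v\in\mathbb B$ there is some $i\ge 1$ with $v\preceq U_i$.

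First I would observe that, by the definition \eqref{eq:def genealogical order continuous tree} of $\preceq$, the set $\{t\in[0,1]:v\preceq t\}$ is a nondegenerate interval with left endpoint $v$: it is the set of $t\ge v$ such that $X^{\mathrm{exc},(\alpha)}$ stays $\ge X^{\mathrm{exc},(\alpha)}_{v^-}$ on $[v,t]$, and since $X^{\mathrm{exc},(\alpha)}$ has no negative jumps it takes a positive amount of time after the jump at $v$ (of size $\Delta_v>0$) to come back down to the level $X^{\mathrm{exc},(\alpha)}_{v^-}$, so this interval has positive length almost surely. (This is the usual statement that every branchpoint of $\cT_\alpha$ has a set of descendants of positive mass.) Since the $U_i$ are i.i.d.\ uniform on $[0,1]$ they are almost surely dense in $[0,1]$, hence for each fixed $v\in\mathbb B$ some $U_i$ falls in the nonempty open interior of $\{t:v\preceq t\}$, giving $v\prec U_i$ and therefore $v\in\mathrm{Span}(X^{\mathrm{exc},(\alpha)};U_1,\dots,U_i)$.

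Finally, since $\mathbb B$ is countable, the intersection over $v\in\mathbb B$ of these almost sure events is again almost sure; on that event $\cB_v\subset\bigcup_{n\geq0}\cT^\dec_n$ for every $v\in\mathbb B$, and taking the union over $v$ concludes. I do not anticipate a genuine obstacle here. The one point that deserves a line of justification is that ``$\cB_v$ as a subset of $\cT^\dec_\alpha$'' is well defined, i.e.\ that $\mathbf{p}^\dec$ is injective on a single block $\cB_v$: this holds because the pseudo-distance $d$ restricts on $\cB_v\times\cB_v$ to $\Delta_v^\gamma d_v$, which separates points since $(\cB,d)$ is a genuine metric space; in particular the inclusion of $\cB_v$ into $\cT^{\ast,\dec}_\alpha$ descends to an isometric embedding into $\cT^\dec_\alpha$, compatible with the analogous identification of $\cT^\dec_n$.
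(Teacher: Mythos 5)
Your proof is correct and follows essentially the same route as the paper's: both reduce to the fact that for each branchpoint time $v\in\mathbb B$ the set $\enstq{s}{v\preceq s}$ has positive Lebesgue measure (an interval of positive length, by the absence of negative jumps), so that almost surely some $U_k$ satisfies $v\preceq U_k$, and then one concludes by countability of $\mathbb B$. Your additional remarks (monotonicity of $\cT^\dec_n$, injectivity of $\mathbf p^\dec$ on a single block) are fine but not needed beyond what the paper's one-line proof already uses.
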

\begin{proof}
	By properties of the stable excursion, for any $t\in \mathbb B$ the set $\enstq{s\succeq t}{s\in \intervalleff{0}{1}}$ has a positive Lebesgue measure. Hence almost surely there is some $k$ such that $t\preceq U_k$.
\end{proof}
\begin{corollary}\label{cor:Tdec is compact iff union Tndec is}
	The space $\overline{\bigcup_{n\geq 0}\cT^\dec_n}$ is compact if and only if $\cT^\dec_\alpha$ is well-defined as a compact metric space. 
\end{corollary}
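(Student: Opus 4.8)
The plan is to prove the two implications separately. The implication ``$\cT^\dec_\alpha$ well-defined and compact $\Rightarrow$ $\overline{\bigcup_{n\ge 0}\cT^\dec_n}$ compact'' is immediate: each $\cT^\dec_n$ is, by construction, the $\mathbf{p}^\dec$-image of a subset of $\cT^{\ast,\dec}_\alpha$, hence a subspace of $\cT^\dec_\alpha$, and a closed subset of a compact space is compact. For the converse I would first record that $\cK:=\overline{\bigcup_{n\ge 0}\cT^\dec_n}$ is in any case a well-defined complete metric space: the marginals are nested, $\rho:=\mathbf{p}^\dec(0)$ lies in each of them, and each $\cT^\dec_n$ has a.s.\ finite diameter, because for any $v$ in the span of $U_1,\dots,U_n$ and any $x\in\cB_v$ one has $d(\rho,x)\le\sum_{w\preceq U_i,\,w\in\Bb}\Delta_w^\gamma\diam(\cB_w)$ for the relevant $i\le n$, which is a.s.\ finite (this is the content of the remark preceding the previous lemma; finiteness follows from $\Ec{\diam(\cB)}<\infty$ together with Lemma~\ref{lem:moment diam block implis moment diam spine}). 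So the statement to prove becomes: $\cK$ compact $\iff$ $\cT^\dec_\alpha$ is a well-defined compact metric space.

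Assume $\cK$ is compact, with $D:=\diam(\cK)<\infty$. \emph{Step~1: the pseudo-distance $d$ on $\cT^{\ast,\dec}_\alpha$ takes only finite values.} Since each decoration is compact and $d(a,b)\le d(\rho,a)+d(\rho,b)+\Delta_{a\wedge b}^\gamma\diam(\cB_{a\wedge b})$, it suffices to bound $d(\rho,x)$ for $x\in\cB_s$. If $s\in\Bb$, the previous lemma gives $\cB_s\subseteq\cK$, hence $d(\rho,x)\le D$. If $s\notin\Bb$, so that $x$ is the point $\mathbf{p}^\dec(s)$ with $d(\rho,x)=\sum_{v\prec s,\,v\in\Bb}\Delta_v^\gamma d_v(\rho_v,Z_v^s)$, I would argue by contradiction: suppose this sum is infinite. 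As branchpoints are dense in $\cT_\alpha$, pick an increasing sequence $v_1\prec v_2\prec\cdots$ of branchpoints on the ancestral line of $s$ with $v_j\to s$. For each $j$ the set $\{r\in[0,1]:r\succeq v_j\}$ has positive Lebesgue measure, so a.s.\ it contains some $U_{k_j}$, and then $\mathbf{p}^\dec(v_j)\in\cT^\dec_{k_j}\subseteq\cK$ (reading $v_j$ as the internal root of its decoration). Using the consistency $Z_v^{v_j}=Z_v^s$ for $v\preceq v_j\preceq s$ (which comes from $I_v^{v_j}=I_v^s$ along an ancestral line), one gets $d(\rho,\mathbf{p}^\dec(v_j))=\sum_{v\prec v_j,\,v\in\Bb}\Delta_v^\gamma d_v(\rho_v,Z_v^s)$, a nondecreasing sequence converging to $d(\rho,x)=\infty$, contradicting $d(\rho,\mathbf{p}^\dec(v_j))\le D$. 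Hence $d$ is finite and $\cT^\dec_\alpha$ is a genuine metric space.

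\emph{Step~2: $\cT^\dec_\alpha=\cK$, so $\cT^\dec_\alpha$ is compact.} The inclusion $\cK\subseteq\cT^\dec_\alpha$ is clear. Conversely, any point of $\cT^\dec_\alpha$ is $\mathbf{p}^\dec(x)$ with $x\in\cB_s$: if $s\in\Bb$ it lies in $\cK$ by the previous lemma; if $s\notin\Bb$ then, with $v_j\to s$ as above, $d(\mathbf{p}^\dec(v_j),\mathbf{p}^\dec(s))=\sum_{v_j\preceq v\prec s,\,v\in\Bb}\Delta_v^\gamma d_v(\rho_v,Z_v^s)\to 0$, being a tail of the series made finite in Step~1, so $\mathbf{p}^\dec(s)=\lim_j\mathbf{p}^\dec(v_j)\in\cK$. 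Thus $\cT^\dec_\alpha=\cK$ is compact, which completes the argument. The step I expect to be the main obstacle is Step~1, namely excluding the pathology flagged in the remark preceding the previous lemma, where $\sup_{v\in\Bb}\Delta_v^\gamma\diam(\cB_v)=\infty$ and $\cT^\dec_\alpha$ fails to be a metric space at all while every finite-dimensional marginal is still fine; the mechanism that resolves it is that along the ancestral line of any would-be infinite-distance point one already meets branchpoints whose decorations sit inside the marginals, so that unboundedness of $d$ is already visible in $\cK$. The remaining ingredients — density of branchpoints in $\cT_\alpha$, positivity of the Lebesgue mass of the subtree above a branchpoint, and consistency of the gluing points $Z_v^{\,\cdot}$ along ancestral lines — are standard features of the construction, already used in the proof of the previous lemma.
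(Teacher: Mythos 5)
Your argument is correct and follows exactly the route the paper intends: the corollary is stated without proof as an immediate consequence of the preceding lemma ($\bigcup_{v\in\Bb}\cB_v\subset\overline{\bigcup_n\cT^\dec_n}$), and your Steps 1–2 simply make explicit the details the authors leave implicit — in particular, bounding $d(\rho,\mathbf{p}^\dec(s))$ for $s\notin\Bb$ by exhausting the branchpoints on the ancestral line of $s$ with points $\rho_{v_j}$ already known to lie in the compact closure. The only point worth a half-sentence in a write-up is which ambient space the closure $\overline{\bigcup_n\cT^\dec_n}$ is taken in before one knows $d$ is everywhere finite, but your Step 1 resolves exactly that issue, so the proof stands.
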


\subsubsection{Description of $\cT_k$ and $\cT_k^\dec$}
First, for any $k\geq 1$, we let $L_k$ be the distance $\mathrm{d}(U_k,\cT_{k-1})$ computed in the tree $\cT_\alpha$. 
Then we consider the set of branch-points $\mathbb B \cap (\cT_k\setminus\cT_{k-1})$ and define the decreasing sequence $(N_k(\ell))_{\ell \geq 1}$ which is the decreasing reordering of the sequence $\left(\Delta_t\right)_{t\in \mathbb B \cap (\cT_k\setminus\cT_{k-1})}$. Denote $N_k= \sum_{\ell \geq 1} N_k(\ell)$.
Note that the jumps of the excursion process are all distinct almost surely. 
We denote $(t_k(\ell))_{\ell\ge 1}$ the corresponding sequence of jump times.
We also consider the sequence $(L_k(\ell))_{\ell\ge 1}$ defined as $\mathrm{d}(t_k(\ell), \cT_{k-1})$. 
Let us also write $N_k^{r}(\ell)$ for the quantity $x_{t_k(\ell)}^{U_k}$, defined in \eqref{eq:definition xts}.

Let us check that these random variables are enough to reconstruct $\cT_k^\dec\setminus \cT_{k-1}^\dec$. 
We have 
\begin{align*}
	\overline{\cT_k^\dec\setminus \cT_{k-1}^\dec}= \overline{\bigcup_{t\in \cT_k\setminus \cT_{k-1}} \cB_t}
\end{align*}
seen as a subset of $\cT^\dec_\alpha$.
Now, from the form of the distance on $\cT^\dec_\alpha$, the induced distance between two point in $\cT_k^\dec\setminus \cT_{k-1}^\dec$ only depends on:
\begin{itemize}
	\item the ordering of the jump times $(t_k(\ell))_{\ell \geq 1}$ by the relation $\preceq$ (they are all comparable because by definition we have $t_k(\ell)\preceq U_k$ for all $\ell \geq 1$),
	\item the sizes of the jumps $(\Delta_{t_k(\ell)})$ and the corresponding blocks $\cB_{t_k(\ell)}$,
	\item the position of the gluing points $Z_{t_k(\ell)}^{U_k}$ on the corresponding block  $\cB_{t_k(\ell)}$. 
\end{itemize}
We can also check that the topological closure $\partial(\cT_k^\dec\setminus \cT_{k-1}^\dec)$ contains a single point, we call this point $Z_k$. When considering the compact object $\overline{\cT_k^\dec\setminus \cT_{k-1}^\dec}$, we see it as rooted at $Z_k$.

Now, if we want to reconstruct the entire $\cT_k^\dec$ from $(\cT_k^\dec\setminus \cT_{k-1}^\dec)$ and $\cT_{k-1}^\dec$ we also need some additional information. 
For that we consider the finite sequence $U_1 \wedge U_k, U_2\wedge U_k, \dots , U_{k-1}\wedge U_k$ which are all $\preceq U_k$ by definition. Because they are all comparable, we can consider $V_k$ the maximal element of this sequence. 
We let $I_k$ be the unique $i\leq k-1$ such that $V_k\in (\cT_i^\dec\setminus \cT_{i-1}^\dec)$. 
Additionally, we let $W_k=u^{U_k}_{V_k}$. 
We can check that the corresponding point $Z_k=Y_{V_k,W_k}$ is such that, conditionally on $\cT_{k-1}^\dec$ and $V_k$, distributed as a random point under the measure $\nu_{V_k}$ carried by the block $\cB_{V_k}$, by definition. 

Now, we use the results of \cite[Proposition~2.2]{goldschmidt_stable_2018} that identify the joint law of these quantities as scaling limit of analogous quantities defined in a discrete setting on trees constructed using the so-called Marchal algorithm.
Those quantities have been studied with a slightly different approach in \cite{senizergues_growing_2022}, which provides an explicit description of those random variables, jointly in $k$ and $\ell$. 
The following can be read from \cite[Lemma~5.5, Proposition~5.7]{senizergues_growing_2022}:
\begin{itemize}
	\item the sequence $(N_k)_{k\geq 1}$ has the distribution of the sequence $(\mathsf{m}_k)_{k\geq 1}=(\mathsf{M}_k-\mathsf{M}_{k-1})_{k\geq 1}$ where $(\mathsf{M}_k)_{k\geq 1}\sim \mathrm{MLMC}\left(\frac{1}{\alpha},1-\frac{1}{\alpha}\right)$.
	\item the sequences $\left(\frac{N_k(\ell)}{N_k}\right)_{\ell \geq 1}$ are i.i.d. with law $\mathrm{PD}(\alpha-1,\alpha -1)$ and $L_k= \alpha \cdot N_k^{\alpha -1} \cdot S_k$ where $S_k$ is the $(\alpha-1)$ diversity of $\left(\frac{N_k(\ell)}{N_k}\right)_{\ell \geq 1}$. 
	\item the random variables $\frac{L_k(\ell)}{L_k}$ are i.i.d. uniform on $\intervalleff{0}{1}$,
	\item the random variables $\frac{N_k^{r}(\ell)}{N_k(\ell)}$ are i.i.d. uniform on $\intervalleff{0}{1}$.
\end{itemize} 
In particular, from the above description, we can check that conditionally on the sequence $(N_k)_{k\geq 1}$, the random variables $(\overline{\cT_k^\dec\setminus \cT_{k-1}^\dec })_{k\geq 1}$ are independent with distribution given by 
\begin{align*}
	\overline{\cT_k^\dec\setminus \cT_{k-1}^\dec} = \mathrm{Scale}(N_k^\gamma, N_k, N_k^\beta; \cS^\dec).
\end{align*} 
We have identified the laws of the two sequences $(\cT_k^\dec\setminus \cT_{k-1}^\dec)_{k\geq 1}$ and $(\tilde{\cT}_k^\dec\setminus \tilde{\cT}_{k-1}^\dec)_{k\geq 1}$. 
In order to identify the law of the sequences $(\cT_k^\dec)_{k\geq 1}$ and  $(\tilde{\cT_k}^\dec)_{k\geq 1}$, we just have to check that the attachment procedure is the same.

Recall the definition of the random variables $(I_k)_{k\geq 1}$, $(V_k)_{k\geq 1}$ and $(W_k)_{k\geq 1}$.
Still from \cite[Lemma~5.5, Proposition~5.7]{senizergues_growing_2022}, conditionally on all the quantities whose distributions were identified above, the $I_k$'s are independent and $\Pp{I_k=i}=\frac{N_i}{N_1+\dots + N_{k-1}}$ for all $i\leq k-1$. 
From those random variables we can construct an increasing sequence of trees $(\mathtt{S}_n)_{n\geq 1}$ in such a way that the parent of the vertex with label $k$ is the vertex with label $I_k$. 
From the observation above, the law of $(\mathtt{S}_n)_{n\geq 1}$ conditionally on everything else mentioned before only depends on $(N_k)_{k\geq 1}$.
This law is the same as that of $(\mathtt{T}_n)_{n\geq 1}$ conditionally on $(\mathsf m_k)_{k\geq 1}$ (and everything else). 

Now we just need to check the law of the gluing points: recall how the point $Z_k$ is given as $Z_k=Z_{V_k}^{U_k}=Y_{V_k,W_k}$. 
We just need to argue that this point, conditionally on $I_k=i$ the point $Z_k$ is taken under a normalized version of the measure $\sum_{\ell=1}^{\infty} \Delta_{t_i(\ell)} \mu_{t_i(\ell)}$.
In fact, stil from \cite{senizergues_growing_2022}, conditionally on $I_k=i$ and everything else mentioned before we have  $\Pp{V_k=t_i(\ell)}=\frac{N_i(\ell)}{N_i}=\frac{ \Delta_{t_i(\ell)}}{\sum_{\ell=1}^{\infty} \Delta_{t_i(\ell)}}$ for $\ell\geq 1$ and $W_k$ is independent uniform on $\intervalleff{0}{1}$. 
Since by definition, $W_k\in \cA_{V_k}$, then $Y_{V_k,W_k}$ is distributed as $\mu_{V_k}$ by construction.
Now, since the uniform distribution has no atom, it is almost surely the case that $W_k \notin \enstq{u_{V_k}^{U_r}}{r\leq k-1, V_k\preceq U_r}$ so that the point $Y_{V_k,W_k}$ has no been used in the construction up to time $k-1$, so the sampling of $Z_k$ is indeed independent of the rest. 

\subsection{Properties of the construction.}
We introduce the set of leaves $\cL= \cT^\dec_\alpha\setminus \cup_{n\geq 1}\cT^\dec_n$. 
Then still from \cite{senizergues_growing_2022} we have the following
\begin{theorem} \label{thm:compact_hausdorff}
	If $\Ec{\diam(\cB)^p}<\infty$ for some $p>\frac{\alpha}{\gamma}$, then $\cT^\dec_\alpha$ is almost surely a compact metric space and $\Ec{\diam(\cT^\dec_\alpha)^p}<\infty$. 
	Under the assumption that the measure on $\cB$ is not concentrated on its root almost surely, the Hausdorff dimension of the set of leaves $\mathcal L$ is given by
	\begin{align*}
		\dim_H(\cL)=\frac{\alpha}{\gamma}
	\end{align*}
	almost surely. 
\end{theorem}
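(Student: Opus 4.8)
The plan is to transport the statement to the self-similar model $\tilde{\cT}^\dec_\alpha=\sG(\cD)$ of the previous subsections, which has the same distribution as $\cT^\dec_\alpha$ by the identification of finite-dimensional marginals carried out above; under this coupling the leaf set $\cL$ is identified with $\iota_\cD(\partial\bU)$, which carries the measure $m:=(\iota_\cD)_\ast\eta$ determined by $\eta(T(v))=\overline{Q_v}$. With this in hand the first two assertions are almost immediate: Lemma~\ref{lem:moment diam block implis moment diam spine} upgrades $\Ec{\diam(\cB)^p}<\infty$ to $\Ec{\diam(\cS^{\dec})^p}<\infty$, and since $\cD$ is obtained by gluing independent rescaled copies of $\cS^{\dec}$ with scaling exponents $w_v^\gamma$ built from a $\GEM(\tfrac1\alpha,1-\tfrac1\alpha)$ fragmentation, the compactness criterion of \cite[Section~4.2]{senizergues_growing_2022} applies for $p>\alpha/\gamma$ and yields both a.s.\ compactness of $\tilde{\cT}^\dec_\alpha$ and $\Ec{\diam(\tilde{\cT}^\dec_\alpha)^p}<\infty$; transferring back gives the claim for $\cT^\dec_\alpha$. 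It then remains to compute $\dim_H(\cL)$, the guiding heuristic being that along an $\eta$-typical ray $v_1\prec v_2\prec\cdots$ one has $\overline{Q_{v_n}}\asymp w_{v_n}^\alpha$ up to subexponential corrections, so a ``ball of radius $\asymp w_{v_n}^\gamma$'' around the corresponding leaf should carry $m$-mass $\asymp\overline{Q_{v_n}}\asymp r^{\alpha/\gamma}$.

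For the upper bound $\dim_H(\cL)\le\alpha/\gamma$ I would use a cut-set covering. For $\epsilon\in(0,1)$ let $\Pi_\epsilon=\{v\in\bU:\overline{Q_v}\le\epsilon<\overline{Q_{v^-}}\}$, an a.s.\ finite antichain with $\sum_{v\in\Pi_\epsilon}\overline{Q_v}=1$, and a first-moment estimate (using that the $\GEM$ weights have negative moments of all orders for the leading coordinate) controls $\#\Pi_\epsilon$. By self-similarity the closure of the part of $\tilde{\cT}^\dec_\alpha$ descending from $v\in\Pi_\epsilon$ is $w_v^\gamma$ times an independent copy of $\tilde{\cT}^\dec_\alpha$, so it has diameter at most $\epsilon^{\gamma/\alpha}\Xi_v$ with $\Xi_v$ i.i.d.-in-$v$ and $\Ec{\Xi_v^{p}}<\infty$; these pieces cover $\cL$. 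Fixing $s>\alpha/\gamma$ and picking $p'\in(\alpha/\gamma,\min(s,p))$, a union bound over $v\in\Pi_{2^{-k}}$ via Markov's inequality shows that a.s.\ for all large $k$ every such $\Xi_v$ stays below a slowly growing threshold, whence $\sum_{v\in\Pi_{2^{-k}}}\diam(\cdot)^s\le\#\Pi_{2^{-k}}\cdot 2^{-ks\gamma/\alpha}\cdot(\text{slowly growing})\to 0$ and $\mathcal H^s(\cL)=0$.

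For the lower bound $\dim_H(\cL)\ge\alpha/\gamma$ I would apply the mass-distribution principle to $m$, and this is exactly where the hypothesis that $\mu$ is not a.s.\ concentrated on $\rho$ enters: it guarantees $\epsilon_0,p_0>0$ such that two independent $\mu_{S^{\dec}}$-points of an independent copy of $\cS^{\dec}$ lie at distance $\ge\epsilon_0$ with probability $\ge p_0$, so that branchpoints genuinely separate leaves. Using this together with the explicit laws of the spine and of the marginals from \cite{senizergues_growing_2022,senizergues_geometry_2021}, one shows that for every $s<\alpha/\gamma$ there is an a.s.\ finite random constant $C(x)$ with $m(B(x,r))\le C(x)\,r^{s}$ for $m$-a.e.\ $x$ and all small $r$. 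Indeed, writing $x=\iota_\cD(\bi)$ and $v_n=i_1\cdots i_n$, for $r\asymp w_{v_n}^\gamma$ the ball $B(x,r)$ captures, apart from leaves agreeing with $\bi$ up to level $n$ (total $\eta$-mass $\overline{Q_{v_n}}$), only leaves that branched off at some earlier level $n'<n$ but whose gluing point into the competing subtree fell within a shrinking ball around the gluing point of $\bi$ inside $\cS^{\dec}_{v_{n'}}$; the non-degeneracy of $\mu$ and the independence of the separation events down the ray keep the total contribution of the latter of the same order $\overline{Q_{v_n}}$, and the law of large numbers for the fragmentation ($-\tfrac1n\log\overline{Q_{v_n}}\to c$ and $-\tfrac1n\log w_{v_n}\to c/\alpha$ a.s.\ along $\eta$-typical rays) converts $\overline{Q_{v_n}}\asymp r^{\alpha/\gamma}$ into the stated bound with any $s<\alpha/\gamma$. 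Letting $s\uparrow\alpha/\gamma$ yields $\dim_H(\cL)\ge\alpha/\gamma$.

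The main obstacle is the lower bound, and more precisely the bookkeeping that shows a ball of radius $\asymp w_{v_n}^\gamma$ does not accidentally accumulate non-negligible $m$-mass from leaves that branched off high up the ray yet stayed clustered inside the decorations there; making this rigorous requires the fine description of how leaves distribute inside a decorated spine, which is the content of \cite{senizergues_growing_2022}, and it is here that both standing hypotheses are essential --- $p>\alpha/\gamma$ so that large decorations neither blow up the diameter nor spoil the uniform smallness of the cut-set pieces, and $\mu(\{\rho\})<1$ so that branchpoints do not collapse distances between leaves (in the degenerate case $\mu=\delta_\rho$ one would instead recover a multiple of $\cT_\alpha$).
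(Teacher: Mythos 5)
Your proposal follows essentially the same route as the paper: after identifying $\cT^\dec_\alpha$ in law with the self-similar decorated Ulam tree $\sG(\cD)$ (the content of the preceding subsections), the paper simply invokes the compactness and Hausdorff-dimension results of \cite{senizergues_growing_2022} for such iterated-gluing constructions, exactly as you do for compactness via Lemma~\ref{lem:moment diam block implis moment diam spine} and \cite[Section~4.2]{senizergues_growing_2022}. Your cut-set covering and mass-distribution sketch for $\dim_H(\cL)=\alpha/\gamma$ is the standard argument that the paper's citation encapsulates, so nothing in your outline departs from the paper's approach.
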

We can now provide a proof of Lemma~\ref{l:smalldec} which can just be seen as a corollary of the above theorem. 
	\begin{proof}[Proof of Lemma~\ref{l:smalldec}]
		Introduce the random block $\widehat{\cB}$ defined on the same probability space as $\cB$ as the interval $\intervalleff{0}{\diam \cB}$ rooted at $0$ and whose sampling measure is a Dirac point mass at $\diam \cB$. 
		We also consider the corresponding object $\widehat{\cT}^\dec$. 
		Because $\widehat{\cB}$ also satisfies the assumptions of the theorem above, $\widehat{\cT}^\dec$ is almost surely compact and by Corollary~\ref{cor:Tdec is compact iff union Tndec is} we have $\mathrm{d}_H(\widehat{\cT}^\dec, \widehat{\cT}^\dec_n)\rightarrow 0$ almost surely as $n\rightarrow\infty$.
		Then we can check that, for uniform random variables $U_1,U_2,\dots,U_n$ that were used to define $\widehat{\cT}^\dec_n$, we have
		\begin{align*}
			\sup_{s\in \intervalleff{0}{1}} \sum_{t\prec s} \diam(\cB_t)\ind{\Delta_t<\delta} \leq\sup_{s\in \{U_1, U_2,\dots, U_n\}} \sum_{t\prec s} \diam(\cB_t)\ind{\Delta_t<\delta}  + \mathrm{d}_H(\widehat{\cT}^\dec, \widehat{\cT}^\dec_n)
		\end{align*}
		Thanks to the above theorem, $\widehat{\cT}^\dec$ is compact and the second term tends to $0$ as $n\rightarrow \infty$. 
		Then, for a large fixed $n$, the first term tends to $0$ as $\delta\rightarrow0$. 
	\end{proof}

\section{Applications} \label{s:applications}

We present several applications of the invariance principle in Theorem~\ref{th:invariance} to block-weighted models of random discrete structures. The limits of these objects are stable trees decorated by other stable trees, stable looptrees, or even Brownian discs.

\subsection{Marked trees and iterated stable trees} \label{s:markedtrees}

Define a class of combinatorial objects $\cM$ consisting of all marked rooted plane trees where the root and each leaf receives a mark 
and internal vertices may or may not receive a mark. The size of an object $M$ in $\cM$ is its number of leaves and is denoted by $|M|$. For a given $n$ there are infinitely many trees with size $n$ so for simplicity we assume that there are no vertices of out-degree 1, rendering the number of trees of size $n$ finite.
\begin{figure} [!h]
	\centerline{\scalebox{0.7}{\includegraphics{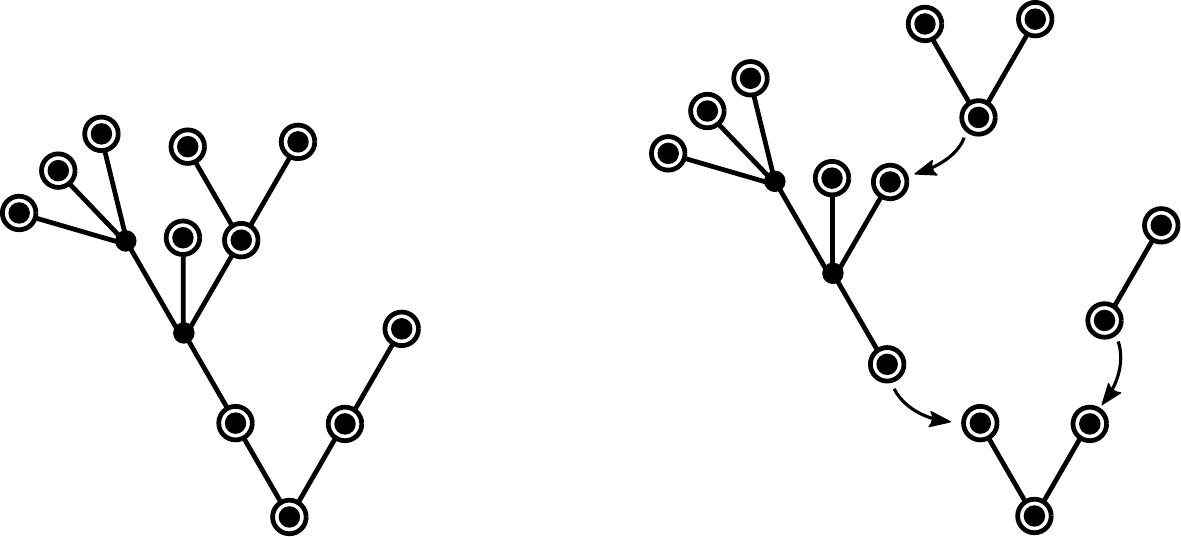}}} 
	\caption{On the left is a tree from $\cM$ and on the right it is shown how it is decomposed into its tree blocks. The root is at the bottom and marked vertices are denoted by black and white circles.} \label{f:treeblocks}
\end{figure} 
A subclass of $\cM$, which we denote by $\cL$, consists of trees where no internal vertex, except the root, receives a mark, and so an $n$-sized object from $\cL$ is a rooted plane tree with $n$ leaves.  The ordinary generating series of the classes $\cM$ and $\cL$ satisfy the equation
\begin {align}\label{eq:iso1}
\cM(z) = z + \cL(\cM(z))-\cM(z).
\end {align}
The interpretation is that either an object from $\cM$ is a single vertex (the root of the tree) or a tree in $\cL$, different from a single root, in which each marked vertex is identified with the root of an object from $\cM$.

We will call a subtree of an element $M$ from $\cM$ a \emph{tree block} if it has more than one vertex, all of its leaves are marked, its root is marked and no other vertices are marked. We further require that it is a maximal subtree with this property. The tree blocks may also be understood as the subtrees obtained by cutting the marked tree at each marked internal vertex, see Fig.~\ref{f:treeblocks}.

\begin {remark}
\label{re:iteriter}
One may introduce marks with $k$ different colors and define 'color blocks' and assign weights to them. This model is a candidate for a discrete version of further iterated trees $\mathcal{T}_{\alpha_1,\alpha_2,\ldots,\alpha_{k+1}}$. Denote the set of such structures with marks of $k$ colours by $\cM_k$, with $\cL = \cM_0$ and $\cM = \cM_1$. One then has an equation of generating series
\begin {equation}\label{eq:iso2}
\cM_k(z) = z + \cM_{k-1}(\cM_k(z))-\cM_k(z).
\end {equation}
\end {remark}

\begin {remark}
It is a standard result (and easy to check) that
\begin {equation*}
\cL(z) = \frac{1+z-\sqrt{z^2-6z+1}}{4} = z + z^2 + 3z^3 + 11 z^4 + 45 z^5 + 197 z^6 + \ldots
\end {equation*}
and from this and Equation~\eqref{eq:iso1} one may deduce that
\begin {equation*}
\cM(z) = \frac{1+7z-\sqrt{z^2-10z+1}}{12} = z + z^2 + 5z^3+31z^4 + 215z^5 +1597 z^6 + \ldots
\end {equation*}
This sequence of coefficients is not in the OEIS. Going further one finds that 
\begin {equation*}
\cM_2(z) = \frac{1+17z-\sqrt{z^2-14z+1}}{24} = z + z^2 + 7z^3+61z^4 + 595z^5 +6217 z^6 + \ldots
\end {equation*}
In general, by  induction
\begin {align*}
\cM_k(z) &=  \frac{1+(2k^2+4k+1)z-\sqrt{z^2-(4k+6)z+1}}{2 (k+1)(k+2)} \\
&= z + z^2 + (2(k+1)+1)z^3 + (5(k+1)^2+5(k+1)+1)z^4 \\
& + (14(k+1)^3+21(k+1)^2+9(k+1)+1)z^5 \ldots
\end {align*}
Note that $[z^n (k+1)^m]\cM_k(z)$ is the number of rooted plane trees with no vertex of outdegree 1 which have $n$ leaves and $m$ internal vertices (not counting the root). In particular it holds that
\begin {equation*}
\sum_{m=0}^{n-2} [z^n (k+1)^m]\cM_k(z) = [z^n] \cL(z).
\end {equation*}

\end {remark}

To each element $L$ of $\cL$ we assign a weight $\gamma(L)$ and denote the class of such weighted structures by $\cL^\gamma$. We assume the weight of the marked tree consisting of a single vertex is equal to $1$.
Then, we assign a weighting $\omega$ to elements $M$ from $\cM$ by 
\begin {equation*}
\omega(M) = \prod_{L} \gamma(L)
\end {equation*}
where the index $L$ ranges over the tree blocks in $M$. Denote the corresponding class of weighted structures by $\cM^\omega$. The weighed ordinary generating series satisfy a similar equation as before
\begin {align*}
\cM^\omega(z) = z + \cL^\gamma(\cM^\omega(z)) - \cM^\omega(z).
\end {align*} 
Define a random element $\mM_n^\omega$ from the set of $n$-sized elements of $\cM^\omega$ which is selected with a probability proportional to its weight. Let $(\iota_n)_{n\geq 0}$ and $(\zeta_n)_{n\geq 0}$ be sequences of non-negative weights, with $\iota_0=\zeta_1=1$ and $\iota_1 = \zeta_1 = 0$. We will be interested in weights $\gamma$ of the form
\begin {equation*}
\gamma(L) = \zeta_{|L|} W(L)
\end {equation*}
with
\begin{figure} [b!]
	\centerline{\scalebox{0.45}{\includegraphics{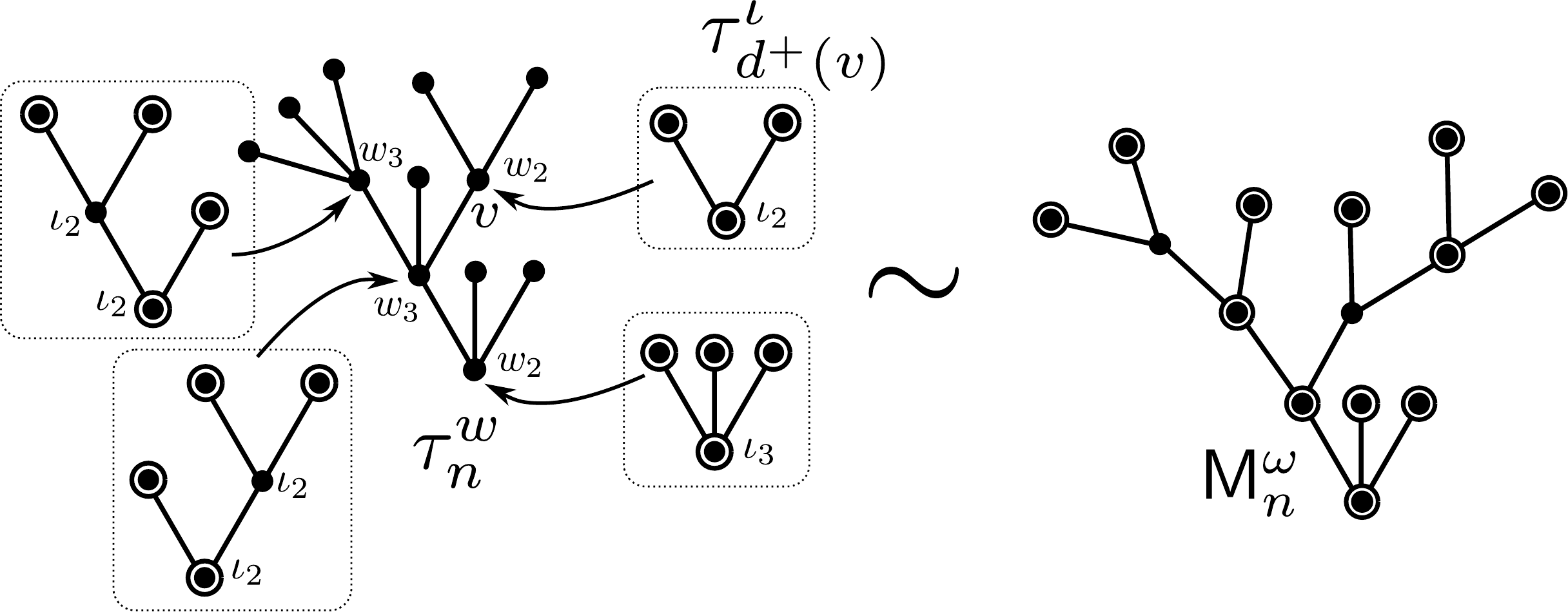}}} 
	\caption{The coupling of $\mM_n^\omega$ with simply generated trees with leaves as atoms.} \label{f:iso}
\end{figure} 

\begin {equation*}
W(L) = \prod_{v \text{ internal vertex in } L} \iota_{d^+(v)}.
\end {equation*}

Let $(w_n)_{n\geq 0}$ be a sequence of non-negative numbers such that $w_0=1$ and $w_1=0$. For $n\geq 2$ we will write the weights $\zeta_n$ as follows
\begin {align*}
\zeta_n = w_n Z_n^{-1}
\end {align*}
where
\begin {equation*}
Z_n = \sum_{\substack{L\in\cL, |L| = n}} W(L)
\end {equation*}
is the \emph{partition function} of elements from $\cL^\gamma$ of size $n$. In particular, when we choose $\zeta_n = 1$ (i.e.~$w_n = Z_n$) for all $n\geq 2$, we say that $\mM_n^\omega$ is an $n$-leaf \emph{simply generated marked tree} with branching weights $(\iota_k)_k$.

\begin {proposition}
\label{pro:samplingmnw}
The random element $\mM_n^\omega$ may be sampled as follows:
\begin {enumerate}
\item Sample an $n$-leaf simply generated tree $\tau_n = \tau_n^\omega$ with branching weights $[z^{k}]\cL^{\gamma}(z) = w_k$ assigned to each internal vertex of out-degree $k>1$ and branching weight $w_0=1$ assigned to each leaf.

\item For each vertex $v$ of $\tau_n$ sample independently a $d^+(v)$-leaf simply generated tree $\delta(v)$ with branching weights $\iota_k$ assigned to each vertex of outdegree $k\geq 0$.  Glue together according to the tree structure of $\tau_n$ (see Fig.~\ref{f:iso}).
\end {enumerate}
\end {proposition}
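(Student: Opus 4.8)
The plan is to prove Proposition~\ref{pro:samplingmnw} by the substitution argument for simply generated trees, organized around the canonical decomposition of an element of $\cM$ into its tree blocks.

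\emph{Block decomposition.} First I would establish that cutting an element $M\in\cM$ with $|M|=n$ at each of its marked internal vertices is a bijection
\[
M \longleftrightarrow \bigl(\tau(M),\,(L_v)_{v}\bigr),
\]
where $\tau(M)$ is a plane tree with $n$ leaves (the \emph{block tree}) whose internal vertices are the tree blocks of $M$ and whose leaves are the leaves of $M$, the product ranges over the internal vertices $v$ of $\tau(M)$, and $L_v\in\cL$ is the corresponding tree block, which satisfies $|L_v|=d^+_{\tau(M)}(v)\ge 2$ (the left-to-right order of the leaves of $L_v$ fixes the plane order of the children of $v$, and each leaf of $L_v$ is attached either to exactly one child block or to a leaf of $M$). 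The inverse map glues each $L_v$ onto the leaves of the block at the parent of $v$, and the single-vertex element of $\cM$ corresponds to the one-leaf tree carrying no block. Under this bijection the weight factorizes, $\omega(M)=\prod_{v}\gamma(L_v)$.

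\emph{Law of the output of the procedure.} I would then compute the law of the random marked tree $M$ produced by Steps~1--2 of the statement. Step~1 samples the block tree: $\tau_n$ is a plane tree with $n$ leaves and $\Pp{\tau_n=\tau}\propto\prod_{v}w_{d^+(v)}$ (product over internal $v$, using $w_0=1$ for the leaves). Conditionally on $\tau_n=\tau$, Step~2 samples for each internal $v$ a $d^+(v)$-leaf simply generated tree $\delta(v)$ with branching weights $(\iota_k)$, so that for $L\in\cL$ with $|L|=d^+(v)$ one has $\Pp{\delta(v)=L}=W(L)/Z_{d^+(v)}$, where $Z_k=\sum_{|L|=k}W(L)$ is precisely the partition function entering $\zeta_k=w_k Z_k^{-1}$; for a leaf $v$ of $\tau$ the decoration $\delta(v)$ is the trivial one-vertex tree. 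By the block decomposition, gluing these decorations along $\tau$ yields every $M'\in\cM$ with $|M'|=n$ exactly once, and writing $\tau=\tau(M')$ and $L_v$ for its blocks,
\[
\Pp{M=M'} \;=\; \frac{1}{\mathcal{Z}_n}\prod_{v} w_{d^+(v)}\cdot\frac{W(L_v)}{Z_{d^+(v)}} \;=\; \frac{1}{\mathcal{Z}_n}\prod_{v}\zeta_{|L_v|}W(L_v) \;=\; \frac{\omega(M')}{\mathcal{Z}_n},
\]
where $\mathcal{Z}_n$ is the normalizing constant of Step~1 and I used $w_k=\zeta_k Z_k$ together with $\omega(M')=\prod_v\gamma(L_v)$. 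Hence the output is a probability measure on the finite set $\{M':|M'|=n\}$ proportional to $\omega$, i.e.\ it has the law of $\mM_n^\omega$; in particular $\mathcal{Z}_n=\sum_{|M'|=n}\omega(M')=[z^n]\cM^\omega(z)$, consistently with the functional equation $\cM^\omega(z)=z+\cL^\gamma(\cM^\omega(z))-\cM^\omega(z)$, which is exactly the equation satisfied by the leaf-counting series of simply generated trees with weight series $1-z+\cL^\gamma(z)$ (that is $w_0=1$, $w_1=0$, $w_k=[z^k]\cL^\gamma(z)$ for $k\ge2$, using $\zeta_1=1$).

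\emph{Main difficulty.} The probability computation is routine bookkeeping once the weights are matched. The step requiring genuine care is the block decomposition: verifying that cutting at marked internal vertices really is a bijection onto decorated plane trees subject to the out-degree constraint $|L_v|=d^+_{\tau(M)}(v)$, that it respects the plane-tree structure on both sides, and that $\omega$ factorizes accordingly. I would also have to keep track of the degenerate conventions ($w_0=1$, $w_1=0$, $\zeta_1=1$, the one-vertex element of $\cM$, and reading a ``$0$-leaf simply generated tree'' as the trivial decoration) and of the mild nondegeneracy making the partition functions $\mathcal{Z}_n$ and the relevant $Z_k$ positive and finite, so that each sampling step is well posed.
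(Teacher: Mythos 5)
Your proposal is correct and is exactly the ``straight-forward calculation'' that the paper invokes without writing out (the paper's alternative being to cite a general decorated-tree sampling lemma from \cite{zbMATH07235577}): the block decomposition of $M\in\cM$ into its block tree and tree blocks, followed by the weight matching $w_k/Z_k=\zeta_k$, is precisely the intended argument, and your bookkeeping of the degenerate conventions is consistent with the paper's setup.
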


We refer to the surveys~\cite{MR2908619,zbMATH07235577} for details on simply generated models of trees with fixed numbers of vertices or leaves. The proof of Proposition~\ref{pro:samplingmnw} is by a straight-forward calculation, or alternatively by applying a general result on sampling trees with decorations~\cite[Lem. 6.7]{zbMATH07235577}.
We will only be interested in the case where  $\cM^\omega(z)$ has positive radius of convergence. In this case one may rescale the weights $w_n$ and $\iota_n$ such that they are probabilities without affecting the distribution of $\mM_n^\omega$, and we will assume in the following that this has been done. We denote by $\xi$ a random variable with distribution $w_n$ and $\chi$ a random variable with distribution $\iota_n$. The tree $\tau_n$ may then be viewed as a Bienaymé--Galton--Watson tree with offspring distribution $\xi$ conditioned on having $n$ leaves and $\delta(v)$ may be viewed as a Bienaymé--Galton--Watson tree with offspring distribution $\chi$ conditioned on having $d^+(v)$ leaves.

We let $d_{\mM_n^\omega}$ denote the graph-distance on $\mM_n^\omega$ and $\nu_{\mM_n}$ the counting measure on its set of non-root vertices. Proposition~\ref{pro:samplingmnw} ensures that $(\mM_n^\omega, d_{\mM_n^\omega}, \nu_{\mM_n})$ falls into the framework of discrete decorated trees described in Section~\ref{s:disc_construction}, with the random tree given by $\tau_n$ and decorations according to a sequence $(\tilde B_k,\tilde \rho_k,\tilde d_k,\tilde \ell_k,\tilde \nu_k)_{k\geq 0}$ given as follows. The space $\tilde{B}_k$ is a $\chi$-BGW-tree conditioned on having $k$ leaves, $\tilde{\rho}_k$ is its root vertex, and $\tilde{d}_k$ is the graph distance on that space. The labeling function $\tilde{\ell}_k$ is chosen to be some bijection between $\{1, \ldots, k\}$ and the leaves of $\tilde{B}_k$. 
Thus $\mu_k$ is the uniform measure on the $k$ leaves of $\tilde{B}_k$.
The measure $\tilde{\nu}_k$ is the counting measure on the non-root vertices of $\tilde{B}_k$. 

Choose $(w_n)_{n\geq 0}$ and $(\iota_n)_{n\geq 0}$ such that $\Ex{\xi}= \Ex{\chi} = 1$, and such that $\xi, \chi$ follow asymptotic power laws so that $\xi$ is in the domain of attraction of a stable law with index $\alpha_2 \in ]1,2[$ and $\chi$ lies in the domain of attraction of a stable law with index $\alpha_1 \in ]1,2[$.  By~\cite[Thm. 6.1, Rem. 5.4]{MR2946438} there is a sequence 
\begin{align*}
	b_n = \left( \frac{|\Gamma(1-\alpha_2)|}{\Pr{\xi=0}} \right)^{1/\alpha_2}\inf \{x \ge 0 \mid \Pr{\xi > x} \le 1/n \} \sim c_1 n^{1/\alpha_2}
\end{align*}
for some constant $c_1>0$ such that
\begin{align*}
	(b_n^{-1} W_{\lfloor|\tau_n|t\rfloor}(\tau_n))_{0 \le t \le 1} \convdis  X^{\text{exc}, (\alpha_2)}.
\end{align*}
The number of vertices of $\tilde{B_k}$ concentrates at $k / \Pr{\chi=0}$, and by~\cite[Thm. 5.8]{MR2946438} it follows that for some constant $c_2>0$
\begin{align*}
	\left(\tilde B_k,\tilde\rho_k, c_2 k^{-1-1/\alpha_1} \tilde d_k, \mu_k,(k/\Pr{\chi=0})^{-1}\tilde \nu_k\right) \to (\cT_{\alpha_1},\rho_{\cT_{\alpha_1}}, d_{\cT_{\alpha_1}},\nu_{\cT_{\alpha_1}},\nu_{\cT_{\alpha_1}})
\end{align*}
with $(\cT_{\alpha_1}, \rho_{\cT_{\alpha_1}},  d_{\cT_{\alpha_1}}, \nu_{\cT_{\alpha_1}})$ denoting the $\alpha_1$-stable tree with root vertex $\rho_{\cT_{\alpha_1}}$. For   $\alpha_1 > \frac{1}{2 - \alpha_2}$ we may  use the construction from Section~\ref{s:cts_construction} to form a decorated stable tree $(\cT_{\alpha_1,\alpha_2}, d_{\cT_{\alpha_1,\alpha_2}}, \nu_{\cT_{\alpha_1,\alpha_2}})$ with  distance exponent $\gamma_1 := 1 - 1 / \alpha_1$, obtained by blowing up the branchpoints of the $\alpha_2$-stable tree with rescaled independent copies of the $\alpha_1$-stable tree. The measure $\nu_{\cT_{\alpha_1,\alpha_2}}$ is taken to be the push-forward of the Lebesgue measure, corresponding to the case $\beta=1$. Theorem~\ref{th:invariance} yields the following scaling limit:

\begin {theorem}\label{thm:mainconv}
Choose $(w_n)_{n\geq 0}$ and $(\iota_n)_{n\geq 0}$ such that $\Ex{\xi}= \Ex{\chi} = 1$, and such that $\xi, \chi$ follow asymptotic power laws so that $\xi$ is in the domain of attraction of a stable law with index $\alpha_2 \in ]1,2[$ and $\chi$ lies in the domain of attraction of a stable law with index $\alpha_1 \in ]1,2[$.  Suppose that $\alpha_1 > \frac{1}{2 - \alpha_2}$. Then
\begin {equation*}
\left(\mM_n^\omega, c_1^{-1+1/\alpha_1} c_2  n^{-\frac{\alpha_1-1}{\alpha_2 \alpha_1}} d_{\mM_n^\omega}, \frac{1}{|\mM_n^\omega|}\nu_{\mM_n^\omega}\right)  \convdis (\cT_{\alpha_1,\alpha_2}, d_{\cT_{\alpha_1,\alpha_2}}, \nu_{\cT_{\alpha_1,\alpha_2}})
\end {equation*}
in the Gromov--Hausdorff--Prokhorov topology.
\end {theorem}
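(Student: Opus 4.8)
The plan is to recognise the random structure $\mM_n^\omega$ as an instance of the discrete decorated-tree construction of Section~\ref{s:disc_construction}, to verify the hypotheses T, D, B of Section~\ref{ss:conditions}, and then to read off the statement from Theorem~\ref{th:invariance}, keeping track of the constants $c_1,c_2$ and of the leaf-versus-vertex normalisations. By Proposition~\ref{pro:samplingmnw}, $\mM_n^\omega$ is exactly the decorated tree with underlying tree $\tau_n$ (a $\xi$-BGW tree conditioned on $n$ leaves) and decorations $\tilde B_k$ given by $\chi$-BGW trees conditioned on $k$ leaves, rooted at $\tilde\rho_k$, with graph metric $\tilde d_k$, with $\tilde\ell_k$ a bijection onto the $k$ leaves, and with $\tilde\nu_k$ the counting measure on non-root vertices; in particular $d_n^\dec=d_{\mM_n^\omega}$ and $\nu_n^\dec=\nu_{\mM_n^\omega}$. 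To match the normalisations appearing in the limit I will run the framework with the rescaled decorations $(\tilde B_k,\tilde\rho_k,c_2\tilde d_k,\tilde\ell_k,\Pr{\chi=0}\tilde\nu_k)$, whose relevant exponents are $\gamma:=\gamma_1=1-1/\alpha_1$ and $\beta:=1$; this only multiplies $d_n^\dec$ by $c_2$ and $\nu_n^\dec$ by $\Pr{\chi=0}$.

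Condition T\ref{c:permute} holds because BGW trees conditioned on their number of leaves are invariant under permutations of siblings, and T\ref{c:invariance} is precisely the invariance principle $(b_n^{-1}W_{\lfloor|\tau_n|t\rfloor}(\tau_n))_t\convdis X^{\mathrm{exc},(\alpha_2)}$ with $b_n\sim c_1 n^{1/\alpha_2}$ recalled before the statement from \cite[Thm.~6.1, Rem.~5.4]{MR2946438}. Condition D\ref{c:GHPlimit} is the convergence $(\tilde B_k,\tilde\rho_k,c_2 k^{-\gamma_1}\tilde d_k,\mu_k,(k/\Pr{\chi=0})^{-1}\tilde\nu_k)\to(\cT_{\alpha_1},\rho_{\cT_{\alpha_1}},d_{\cT_{\alpha_1}},\nu_{\cT_{\alpha_1}},\nu_{\cT_{\alpha_1}})$ of \cite[Thm.~5.8]{MR2946438}, with exponents $\gamma_1$, $1$ and limiting probability measure $\mu=\nu_{\cT_{\alpha_1}}$ (the factor $\Pr{\chi=0}^{-1}$ being absorbed into $\nu$ by the rescaling above). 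Condition D\ref{c:moment_diam_limit} holds since $\Ec{\diam(\cT_{\alpha_1})^p}<\infty$ for every $p>0$, in particular for some $p>\alpha_2/\gamma_1$; and D\ref{c:mass_limit} follows from the concentration of $|\tilde B_k|$ at $k/\Pr{\chi=0}$ together with a uniform integrability bound.

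For the conditions B I use Section~\ref{ss:BGW} together with Remark~\ref{re:remarkleaves}, which transfers those BGW statements from conditioning on the number of vertices to conditioning on the number of leaves. Proposition~\ref{prop:small blobs don't contribute} gives B\ref{cond:small decorations dont contribute to distances} provided one has a uniform moment bound $\sup_k\Ec{(\diam(\tilde B_k)/k^{\gamma_1})^m}<\infty$ for some $m>\frac{4\alpha_2}{2\gamma_1+1-\alpha_2}$ (the denominator being positive because $\gamma_1>\alpha_2-1$); this is a standard tail estimate for conditioned stable BGW trees, and is the main technical point of the proof. Since $\beta=1<\alpha_2$, Lemma~\ref{lem:moment measure discrete block implies B2 or B3} then yields B\ref{cond:measure is spread out} with a sequence $a_n$ that is asymptotically a positive constant multiple of $n$, using only the elementary bound $\sup_k\Ec{|\tilde B_k|/k}<\infty$. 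The sole numerical input is the hypothesis $\alpha_1>\frac1{2-\alpha_2}$, which is exactly equivalent to $\gamma_1>\alpha_2-1$, ensuring both that $\cT_{\alpha_1,\alpha_2}$ is a well-defined compact metric space (Theorem~\ref{thm:compact_hausdorff}, via D\ref{c:moment_diam_limit}) and that Theorem~\ref{th:invariance} applies.

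With all hypotheses verified, Theorem~\ref{th:invariance} gives $(T_n^\dec, b_n^{-\gamma_1}(c_2 d_{\mM_n^\omega}), a_n^{-1}(\Pr{\chi=0}\nu_{\mM_n^\omega}))\convdis(\cT_{\alpha_1,\alpha_2}, d_{\cT_{\alpha_1,\alpha_2}}, \upsilon_1^\dec)$ in the Gromov--Hausdorff--Prokhorov topology, and since $\beta=1\le\alpha_2$ the limit measure $\upsilon_1^\dec$ is the push-forward of Lebesgue measure, i.e.\ the measure $\nu_{\cT_{\alpha_1,\alpha_2}}$ of the statement. Finally I rewrite the scalings: from $b_n\sim c_1 n^{1/\alpha_2}$ one has $c_2 b_n^{-\gamma_1}\sim c_1^{-1+1/\alpha_1}c_2\, n^{-\frac{\alpha_1-1}{\alpha_2\alpha_1}}$, the distance normalisation in the statement; and since $\upsilon_1^\dec$ is a probability measure, convergence of the total masses forces $a_n^{-1}\Pr{\chi=0}\,\nu_{\mM_n^\omega}(\mM_n^\omega)\to1$, so that $|\mM_n^\omega|=\nu_{\mM_n^\omega}(\mM_n^\omega)+1\sim a_n/\Pr{\chi=0}$, and Slutsky's lemma lets us replace $a_n^{-1}\Pr{\chi=0}\nu_{\mM_n^\omega}$ by $|\mM_n^\omega|^{-1}\nu_{\mM_n^\omega}$ without changing the limit. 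This yields the claimed convergence.
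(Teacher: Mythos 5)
Your proposal is correct and follows essentially the same route as the paper: Proposition~\ref{pro:samplingmnw} identifies $\mM_n^\omega$ as a discrete decorated tree, the conditions of Section~\ref{ss:conditions} are checked via the cited BGW results (with Remark~\ref{re:remarkleaves} transferring from vertex- to leaf-conditioning and Proposition~\ref{prop:small blobs don't contribute} plus sub-exponential tail bounds for conditioned stable BGW trees giving B1), and Theorem~\ref{th:invariance} is then applied. Your bookkeeping of the constants $c_1^{-1+1/\alpha_1}c_2$ and the Slutsky argument replacing $a_n^{-1}$ by $|\mM_n^\omega|^{-1}$ are accurate and in fact slightly more explicit than the paper's own exposition.
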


By Theorem~\ref{thm:compact_hausdorff} and standard arguments we also obtain the Hausdorff-dimension of the iterated stable tree:

\begin {theorem}
For every $\alpha_2 \in (1,2)$ and $\alpha_1 \in (\frac{1}{2-\alpha_2},2)$, almost surely the random tree $\mathcal{T}_{\alpha_1,\alpha_2}$ has Hausdorff dimension $\displaystyle\frac{\alpha_2 \alpha_1}{\alpha_1-1}$.
\end {theorem}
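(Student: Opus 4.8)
The plan is to read the dimension off Theorem~\ref{thm:compact_hausdorff} together with the decomposition of the decorated stable tree recorded in Section~\ref{sec:self-similarity}, using only monotonicity and countable stability of Hausdorff dimension. Recall that $\cT_{\alpha_1,\alpha_2}$ is the decorated $\alpha_2$-stable tree with decoration law that of the $\alpha_1$-stable tree $\cT_{\alpha_1}$ (with its mass measure) and distance exponent $\gamma_1 = 1-1/\alpha_1 = (\alpha_1-1)/\alpha_1$, and that the hypothesis $\alpha_1 > \tfrac{1}{2-\alpha_2}$ is exactly $\gamma_1 > \alpha_2-1$, so the constructions of Sections~\ref{s:cts_construction} and~\ref{sec:self-similarity} apply. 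For the dimension we only need metric facts, so the choice of measure ($\beta=1$ here) is immaterial. I will prove $\dim_H(\cT_{\alpha_1,\alpha_2}) \ge \tfrac{\alpha_1\alpha_2}{\alpha_1-1}$ from the dimension of the leaf set, and the matching upper bound by covering the complement of the leaves.

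For the lower bound, apply Theorem~\ref{thm:compact_hausdorff} with $\alpha=\alpha_2$, $\gamma=\gamma_1$ and $\cB=\cT_{\alpha_1}$. The moment hypothesis $\Ec{\diam(\cB)^p}<\infty$ for some $p>\alpha/\gamma=\tfrac{\alpha_1\alpha_2}{\alpha_1-1}$ holds because $\diam(\cT_{\alpha_1})$ has finite moments of all orders (classical tail estimates for the height process of stable trees; this is in any case already needed for $\cT_{\alpha_1,\alpha_2}$ to make sense as a compact space). Moreover the limiting measure on $\cB$ is the mass measure of $\cT_{\alpha_1}$, which is carried by the leaves of $\cT_{\alpha_1}$ and in particular is not concentrated at the root. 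Hence Theorem~\ref{thm:compact_hausdorff} gives, almost surely, $\dim_H(\cL)=\alpha_2/\gamma_1=\tfrac{\alpha_1\alpha_2}{\alpha_1-1}$, where $\cL=\cT_{\alpha_1,\alpha_2}\setminus\bigcup_{n\ge1}\cT^{\dec}_n$; monotonicity then yields $\dim_H(\cT_{\alpha_1,\alpha_2})\ge\tfrac{\alpha_1\alpha_2}{\alpha_1-1}$.

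For the upper bound, use the identity of sets $\cT_{\alpha_1,\alpha_2}=\cL\cup\bigcup_{n\ge1}\cT^{\dec}_n$ (immediate from the definition of $\cL$; compare Corollary~\ref{cor:Tdec is compact iff union Tndec is}). By countable stability, $\dim_H(\cT_{\alpha_1,\alpha_2})=\max\!\bigl(\dim_H(\cL),\ \sup_{n\ge1}\dim_H(\cT^{\dec}_n)\bigr)$, so it remains to bound $\dim_H(\cT^{\dec}_n)$. From Section~\ref{sec:self-similarity}, each $\overline{\cT^{\dec}_k\setminus\cT^{\dec}_{k-1}}$ is isometric to $\cS^{\dec}$ with its metric multiplied by $N_k^{\gamma_1}$, so $\overline{\cT^{\dec}_n}$ is a finite union of rescaled copies of $\cS^{\dec}$ and $\dim_H(\cT^{\dec}_n)\le\dim_H(\cS^{\dec})$. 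Finally, $\cS^{\dec}$ is the completion of $\bigsqcup_{i\ge1}\cB_i$, where each $\cB_i$ (with induced metric $P_i^{\gamma_1}d_i$) is a scaled copy of $\cT_{\alpha_1}$, of Hausdorff dimension $\tfrac{\alpha_1}{\alpha_1-1}$ by Duquesne--Le Gall; the points added by the completion are parametrised by the positions of the atoms along the spine and embed isometrically into $\mathbb{R}$ via the nondecreasing cumulative-length function, hence form a set of Hausdorff dimension at most $1$. Since $1<\tfrac{\alpha_1}{\alpha_1-1}\le\tfrac{\alpha_1\alpha_2}{\alpha_1-1}$ (using $\alpha_2>1$), countable stability gives $\dim_H(\cS^{\dec})=\tfrac{\alpha_1}{\alpha_1-1}$, and combining the two bounds yields $\dim_H(\cT_{\alpha_1,\alpha_2})=\tfrac{\alpha_1\alpha_2}{\alpha_1-1}$ almost surely.

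The genuinely substantive ingredient is Theorem~\ref{thm:compact_hausdorff} (imported from \cite{senizergues_growing_2022}), which is where the exponent $\alpha/\gamma$ for the leaves actually comes from; the rest is bookkeeping. The one point requiring a little care is the upper bound on the ``skeleton'' of the decorated tree, the part not inside any single decoration, which I handle by absorbing it into the countable union of rescaled decorated spines and noting that the completion residual of $\cS^{\dec}$ is one-dimensional, hence dominated by $\dim_H(\cL)$. Besides this one also uses the standard facts that $\cT_{\alpha_1}$ has Hausdorff dimension $\tfrac{\alpha_1}{\alpha_1-1}$ and diameter with moments of all orders.
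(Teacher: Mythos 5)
Your proof is correct and follows the same route the paper intends: the paper derives this theorem from Theorem~\ref{thm:compact_hausdorff} via the observation that the dimension is $\max\{d_{\mathcal B},\alpha/\gamma\}$, leaving the details as ``standard arguments''. You have simply filled in those details — in particular the bound on the skeleton via the decomposition into rescaled copies of $\cS^{\dec}$ and the one-dimensionality of the completion residual — so nothing further is needed.
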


The construction may be iterated: With $\gamma_2 := \gamma_1 / \alpha_2$ we may choose any $\alpha_3 \in ]1, 1+\gamma_2[$ and build the iterated stable tree $\cT_{\alpha_1, \alpha_2, \alpha_3}$ as in Section~\ref{s:cts_construction} with distance exponent~$\gamma_2$, by blowing up the branchpoints of an $\alpha_3$-stable tree by rescaled independent copies of the iterated stable tree $\cT_{\alpha_1, \alpha_2}$. By Remark~\ref{re:iteriter} and analogous arguments as for Theorem~\ref{thm:mainconv}, the tree $\cT_{\alpha_1, \alpha_2, \alpha_3}$ arises as scaling limits of random finite marked trees with diameter having order $n^{\gamma_3}$ for $\gamma_3 := \gamma_2/ \alpha_3$. This construction may be iterated indefinitely often, by choosing $\alpha_4 \in ]1, 1 + \gamma_3[$ and setting $\gamma_4 = \gamma_3 / \alpha_4$, and so on. This yields a sequence $(\alpha_i)_{i \ge 1}$ so that $\cT_{\alpha_1, \ldots,  \alpha_k}$ is well-defined for all $k \ge 2$. We pose the following question:

\begin{question}
	Is there a non-trivial scaling limit for $\cT_{\alpha_1, \ldots,  \alpha_k}$ as $k \to \infty$?
\end{question}

Note that the associated sequence $(\gamma_i)_{i \ge 1}$ is strictly decreasing and satisfies $\gamma_{i+1} < \gamma_i/ (\gamma_i +1)$, which yields $\lim_{i \to \infty} \gamma_i= 0$ and hence $\lim_{i \to \infty} \alpha_i= 1$. The intuition for the question is that if $\alpha$ is close to $1$, then the vertex with maximal width in $\cT_\alpha$ should dominate and hence blowing up all branchpoints by some decoration should yield something close to a single rescaled version of that decoration. Hence $\cT_{\alpha_1, \ldots,  \alpha_{k+1}}$ \emph{should} be close to a constant multiple of  $\cT_{\alpha_1, \ldots,  \alpha_{k}}$ for $k$ large enough.
Since $\cT_{\alpha_1, \ldots,  \alpha_{k}}$ has Hausdorff dimension $\alpha_1 \cdots \alpha_k/(\alpha_1-1)$, we expect that $(\alpha_i)_{i \ge 1}$ needs to be chosen so that $\prod_{i=1}^\infty \alpha_i$ converges in order to get a scaling limit.

\subsection{Weighted outerplanar maps}

\label{sec:outer}

Planar maps may roughly be described as drawings of connected graphs on the $2$-sphere, such that edges are represented by arcs that may only intersect at their endpoints.  The connected components that are created when removing the map from the plane are called the faces of the map. The number of edges on the boundary of a face is its degree. In order avoid symmetries, usually a root edge is distinguished and oriented. The origin of the root-edge is called the root vertex. The face to the right of the root edge is called the outer face, the face to the left the root face. 

An outerplanar map is a planar map where all vertices lie on the boundary of the outer face.  The geometric shape of outerplanar maps has received some attention in the literature~\cite{zbMATH06673644,zbMATH06729837,zbMATH07138334,zbMATH07235577}. Throughout this section we fix  two sequences   $\iota = (\iota_k)_{k \ge 3}$ and $\kappa = (\kappa_k)_{k \ge 2}$ of non-negative weights. We are interested in random outerplanar maps that are generated according to $\kappa$-weights on their blocks and $\iota$-weights on their faces.  Our goal in this section is to describe phases in which we obtain decorated stable trees as scaling limits. Specifically, we will obtain stable trees decorated with looptrees and Brownian trees. This is motivated by the mentioned work~\cite{zbMATH07138334}, which described a phase transition of  random face-weighted outerplanar maps from a deterministic circle to the Brownian tree via loop trees. By utilizing the second weight sequence $\kappa$  we hence obtain a completely different phase diagram.

We will only consider outerplanar maps without multi edges or loop edges. Recall that a block of a graph is a connected subgraph $D$ that is maximal with the property that removing any of its vertices does not disconnect $D$.
 Blocks of outerplanar maps are precisely dissections of polygons. We define the weight of a dissection $D$ by
\begin{align}
	\gamma(D) = \kappa_{|D|} \prod_F \iota_{\mathrm{deg}(F)},
\end{align}
with $|D|$ denoting the number of vertices of $D$, the index $F$ ranging over the faces of $D$, and $\mathrm{deg}(F)$ denoting the degree of the face $F$. This includes the case where $D$ consists of two vertices joined by a single edge. The weight of an outerplanar map $O$ is then defined by
\begin{align}
	\label{eq:omegao}
	\omega(O) = \prod_{D} \gamma(D),
\end{align}
with the index $D$ ranging over the blocks of $O$. Given $n \ge 1$ such that at least one $n$-sized outerplanar map has positive $\omega$-weight, this allows us to define a random outerplanar map $O_n$ that is selected with probability proportional to its $\omega$-weight among the finitely many outerplanar graphs with $n$ vertices. We will only consider the case where infinitely many integers $n$ with this property exist. Likewise, we define the random dissection $D_n$ that is sampled with probability proportional to its weight among all dissections with $n$ vertices.

The random outerplanar map $O_n$ fits into the framework considered in the present work. We will make this formal. For each $k \ge 2$ let us set 
\begin{align}
	\label{eq:defd}
	d_k =   \sum_{D : |D| =k} \gamma(D),
\end{align}
with the sum index $D$ ranging over all dissections of $k$-gons. We will only consider the case where the power series $D(z) := \sum_{k \ge 2} d_k z^k$ has positive radius of convergence $\rho_D > 0$. For any $t>0$ with $D(t) < t$  we define the probability weight sequence 
\begin{align}
	(p_i(t))_{i \ge 0} = (1 - D(t)/t, 0, d_2t , d_3t^2, d_4 t^3, \ldots).
\end{align}
Its first moment is given by $\sum_{k \ge 2} k d_k t^{k-1}$, and we set
\begin{align}
	m_O = \lim_{t \nearrow \rho_D} \sum_{k \ge 2} k d_k t^{k-1} \in ]0, \infty].
\end{align}
If $m_O \ge 1$, then there is a unique $0 < t_O \le \rho_D$ for which $ \sum_{k \ge 2} k d_k t_O^{k-1} = 1$. If $m_O < 1$ we set $t_O = \rho_D$. Furthermore, we let $\xi$ denote a random non-negative integer with probabilities
\begin{align}
	\label{eq:defxi}
	\Pr{\xi= i} = p_i(t_O), \qquad i \ge 0.
\end{align}

Let $T_n$ denote a $\xi$-BGW tree conditioned on having $n$ leaves. Note that  $T_n$ has no vertex with outdegree $1$. Let $g_n>0$ be a positive real number. For each $k \ge 2$ let  $(\tilde B_k,\tilde d_k,\tilde\rho_k,\tilde\ell_k,\tilde \nu_{n,k})$ denote the decoration with $\tilde{B}_k = D_k$, $\tilde d_k $ the graph distance on $D_k$ multiplied by $g_n$, $\tilde{\rho}_k$ the root vertex of $D_k$, $\tilde{\ell}_k: \{1, \ldots, k\} \to D_k$ any fixed enumeration of the $k$ vertices of $D_k$, and $\tilde{\nu}_k$ the counting measure on the non-root vertices (that is, all vertices except for the origin of the root edge) of $D_k$. We let $\tilde{B}_0$ denote a trivial one point space with no mass.

\begin{figure}[t]
	\centering
	\begin{minipage}{\textwidth}
		\centering
		\includegraphics[width=0.7\linewidth]{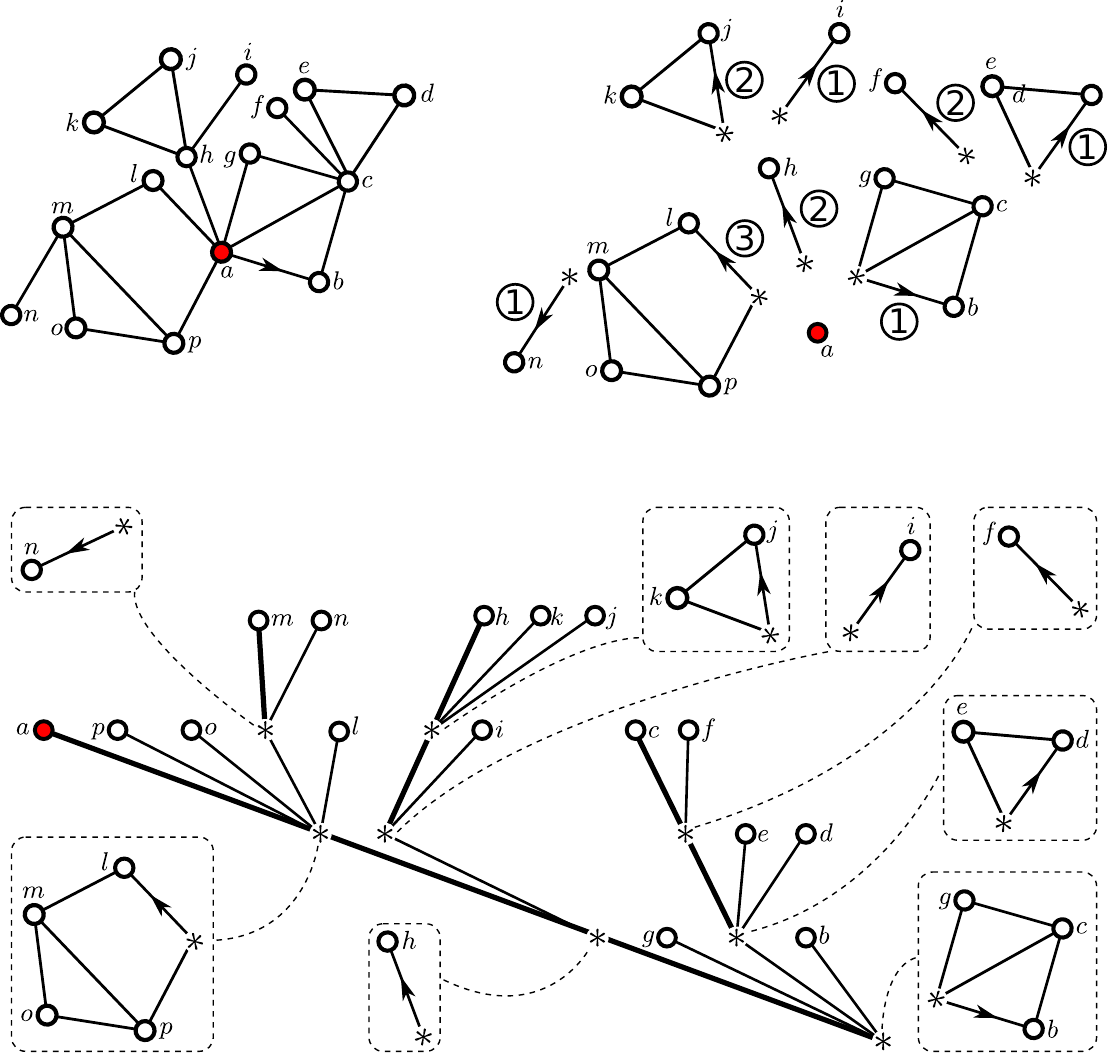}
		\caption[]%
		{Correspondence of an outerplanar map (top left corner) to a tree decorated by dissections of polygons (bottom half).\footnote{Source of image: \cite{zbMATH07138334}}}
		\label{f:outerdec}
	\end{minipage}
\end{figure}

\begin{lemma}[{\cite[Lem. 6.7, Sec. 6.1.4]{zbMATH07235577}}]
	\label{le:outerdeco}
The decorated tree $(T_n^\dec,d_n^\dec,\nu_n^\dec)$ is distributed like the random weighted outerplanar map $O_n$ equipped with the graph distance multiplied by $g_n$ and the counting measure on the non-root vertices.
\end{lemma}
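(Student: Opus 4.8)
The plan is to obtain Lemma~\ref{le:outerdeco} by feeding the classical block decomposition of outerplanar maps into the general ``gluing of decorations'' correspondence of \cite[Lem.~6.7, Sec.~6.1.4]{zbMATH07235577}, and then checking that the weight sequences fixed above match the weights produced by this decomposition. Concretely, given an outerplanar map $O$ rooted at an oriented edge, one decomposes it into its $2$-connected blocks, regarding a bridge as a degenerate block on two vertices; since $O$ is outerplanar every block is a dissection of a polygon with at least two vertices. The blocks and cut vertices of $O$ form its block--cut tree, and the root edge together with the planar embedding (equivalently the cyclic order around the outer face) single out a canonical rooted plane tree $T$ from it: a vertex $v$ of $T$ of outdegree $k=d^+(v)$ carries the block $D_v$ that is a dissection of a $k$-gon, with internal root the vertex of $D_v$ closest to the root of $O$ and external roots the remaining vertices listed in the cyclic order inherited from the embedding, whereas the leaves of $T$ (outdegree $0$, trivial decoration) correspond to vertices of $O$ carrying no further block. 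From the block identity $|V(O)| = 1 + \sum_{\text{blocks }D}(|D|-1)$ and the tree identity $\#\{\text{leaves of }T\} = 1 + \sum_{v}(d^+(v)-1)$ one gets a bijection between the vertices of $O$ and the leaves of $T$. By inspection this realizes the decorated-tree construction of Section~\ref{s:disc_construction}, so as a bare metric-measure space $T^{\dec}$ equipped with the glued metric (scaled by $g_n$) and the counting measure on non-root vertices \emph{is} $O$; see Figure~\ref{f:outerdec}.

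It remains to check that, under this bijection, the $\omega$-weighted random map $O_n$ conditioned on $n$ vertices corresponds exactly to the $T_n$ and decorations $(\tilde B_k)_{k\ge 0}$ fixed above. The weight \eqref{eq:omegao} factorizes over blocks, $\omega(O)=\prod_D \gamma(D)$ with $\gamma(D)=\kappa_{|D|}\prod_F \iota_{\mathrm{deg}(F)}$, which under the decomposition becomes $\prod_{v\in T}\gamma(D_v)$. Hence, conditionally on the plane tree, the decorations are independent, the one at an outdegree-$k$ vertex being a dissection of a $k$-gon drawn with probability proportional to its $\gamma$-weight, i.e.\ distributed as $D_k$; and summing $\gamma$ over all dissections of a $k$-gon produces the factor $d_k$ of \eqref{eq:defd}, so the shape $T_n$ is a simply generated plane tree with branching weight $d_k$ at outdegree $k\ge2$ and weight $1$ at leaves, conditioned on $n$ leaves. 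Performing the Boltzmann tilt at $t_O$ (with the usual extremal convention $t_O=\rho_D$ when $m_O<1$), which preserves the conditioned law and turns the branching weights into the probability sequence $(p_i(t_O))_{i\ge0}$, the standard equivalence between simply generated trees conditioned on their number of leaves and Bienaymé--Galton--Watson trees conditioned on their number of leaves (\cite{MR2946438}; see also the surveys~\cite{MR2908619, zbMATH07235577}) identifies $T_n$ with a $\xi$-BGW tree conditioned on $n$ leaves, $\xi$ as in \eqref{eq:defxi}. This is exactly \cite[Lem.~6.7, Sec.~6.1.4]{zbMATH07235577}, to which we defer for the details.

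Given the bijection, compatibility of the metric and measure is then essentially automatic: a geodesic of $O_n$ between two vertices in blocks $D_v$ and $D_w$ crosses exactly once each cut vertex on the block--cut path from $v$ to $w$, which is precisely the recursive decomposition built into $d_n^{\dec}$ in \eqref{eq:definition distance dec n}, and $\nu_n^{\dec}=\sum_v\nu_v$ weights each non-root vertex of $O_n$ by $1$, counting a cut vertex once as a gluing point. I expect the genuinely delicate point to be the one that makes citing \cite{zbMATH07235577} worthwhile rather than redoing the argument: pinning down the canonical \emph{plane} rooting of the block--cut tree, i.e.\ fixing conventions that order the external roots of each block consistently with the embedding of $O_n$ (so the abstract gluing reproduces the true incidences), that handle the case where the root vertex of $O_n$ is itself a cut vertex (appearing as the shared root of $B_\varnothing$ and some $B_{\varnothing j}$), and that make the leaf-to-vertex bijection canonical --- all carried out in \cite[Sec.~6.1.4]{zbMATH07235577}.
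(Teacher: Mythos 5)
Your argument is correct and is essentially the paper's own: the paper gives no proof beyond citing \cite[Lem.~6.7, Sec.~6.1.4]{zbMATH07235577} and pointing to Figure~\ref{f:outerdec}, and your block decomposition, weight factorization over blocks, Boltzmann tilt at $t_O$, and additivity of the graph metric along the block--cut path reconstruct exactly what that citation contains. One small slip to fix: since a vertex of outdegree $k=|D_v|$ needs $k$ external roots, these must be \emph{all} $k$ vertices of the dissection (including the internal root, as the paper's $\tilde{\ell}_k$ specifies), not just the ``remaining'' $k-1$ --- this is precisely what produces the chain of blocks sharing a cut vertex and the terminating leaf that makes your leaf-to-vertex bijection and the identity $\#\{\text{leaves}\}=1+\sum_v(d^+(v)-1)=1+\sum_D(|D|-1)=|V(O)|$ work.
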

\noindent See Figure~\ref{f:outerdec}  for an illustration. We refer to the cited sources for detailed justifications.

Similarly as for parameter $m_O$, we let $\rho_\iota$ denote the radius of convergence of the series $\sum_{k \ge 2} \iota_{k+1} z^{k}$ and set
\begin{align*}
	m_D := \lim_{t \nearrow \rho_\iota} \sum_{k \ge 2} k \iota_{k+1} t^{k-1} \in ]0, \infty].
\end{align*}
If $m_D\ge 1$, there is a unique constant $0 < t_D \le \rho_\iota$ such that $\sum_{k \ge 2} k \iota_{k+1} t_D^{k-1} = 1$. For $m_D < 1$ we set $t_D = \rho_\iota$. Furthermore, we let $\zeta$ denote a random non-negative integer with distribution given by the probability weight sequence
\begin{align*}
	(q_i)_{i \ge 0} := (1 - \sum_{k \ge 2} \iota_{k+1} t_D^{k-1}, 0, \iota_{3} t_D, \iota_4 t_D^2, \ldots).
\end{align*}

We list three known non-trivial scaling limits for the random face-weighted dissection~$D_n$. If $m_D>1$, then by~\cite{MR3382675} $D_n$ lies in the universality class of the Brownian tree $\mathcal{T}_2$, with a diameter of order $\sqrt{n}$. (That is, $\mathcal{T}_2$ is given by $\sqrt{2}$ times the random real tree obtained by identifying points of a Brownian excursion of duration $1$.) 
 Specifically, 
\begin{align}
	\label{eq:d1}
	\frac{\sqrt{2}}{\sqrt{ \Va{\zeta} q_0}} \frac{1}{4} \left( \Va{\zeta} + \frac{q_0\Pr{\zeta \in 2 \ndN_0}}{2\Pr{\zeta \in 2 \ndN_0} - q_0 } \right) \frac{1}{\sqrt{n}} D_n \convdis \mathcal{T}_2.
\end{align}

If $m_D = 1$ and $\Pr{\zeta \ge n} \sim c_D n^{-\eta}$ for $1 < \eta < 2$ and $c_D>0$, then by~\cite{MR3286462} $D_n$ lies in the universality class of the $\eta$-stable looptree $\mathcal{L}_\eta$:
\begin{align}
	\label{eq:d2}
 (c_D q_0 |\eta(1- \eta)|)^{1/\eta} 	\frac{1}{n^{1/\eta}} D_n \convdis \mathcal{L}_\eta
\end{align}
in the Gromov--Hausdorff--Prokhorov sense. Here and in the following we use a shortened notation, where the product of a real number and a random finite graph refers to the vertex set space with the corresponding rescaled graph metric and the uniform measure on the set of vertices. 

If $0<m_D < 1$ and $\Pr{\zeta=n} = f(n) n^{-\theta}$ for some constant $\theta > 2$ and a slowly varying function $f$, then $D_n$ lies in the universality class of a deterministic loop. That is,
\begin{align}
	\label{eq:d3}
	 \frac{q_0}{n(1 - \Ex{\zeta})} D_n \convdis S^1
\end{align}
with $S^1$ denoting the $1$-sphere. See for example~\cite{zbMATH07138334} for a detailed justification.

Knowing the asymptotic behaviour of $D_n$ allows us to describe the asymptotic shape of the random weighted outerplanar map $O_n$:

\begin{itemize}

\item If $m_O>1$, then by~\cite[Thm. 6.60]{zbMATH07235577} there is a constant $c_\omega > 0$ (defined in a complicated manner using expected distances in bi-pointed Boltzmann  dissections) such that
\begin{align}
	c_\omega \frac{1}{\sqrt{n}} O_n \convdis 	 \mathcal{T}_2.
\end{align}
In this case the asymptotics of weighted dissections only influence the constant $c_\omega$, but not the universality class.

\item If $m_O=1$ and $\Pr{\xi \ge  n} \sim c_O n^{-\alpha}$ for $1<\alpha<2$, then $T_n$ lies in the universality class of an $\alpha$-stable tree. That is, by~\cite[Thm. 6.1, Rem. 5.4]{MR2946438} there is a sequence 
\begin{align*}
	b_n = \left( \frac{|\Gamma(1-\alpha)|}{\Pr{\xi=0}} \right)^{1/\alpha}\inf \{x \ge 0 \mid \Pr{\xi > x} \le 1/n \}
\end{align*}
of order $n^{1/\alpha}$ for which
\begin{align*}
	(b_n^{-1} W_{\lfloor|T_n|t\rfloor}(T_n))_{0 \le t \le 1} \convdis  X^{\text{exc}, (\alpha)}.
\end{align*}

Now, the outcome of applying Theorem~\ref{th:invariance} depends on the decoration. Suppose that $\gamma > \alpha-1$. Then in each of the three discussed cases~\eqref{eq:d1},~\eqref{eq:d2}, and~\eqref{eq:d3} we obtain a scaling limit of the form
\begin{align}
	\frac{1}{b_n^\gamma} O_n \convdis \mathcal{T}_\alpha^\dec.
\end{align}
Specifically:
\subitem a)  If $m_D>1$ then $\gamma = 1/2$ and   $\mathcal{T}_\alpha^\dec$ is the $\alpha$-stable tree decorated according to a constant multiple of the Brownian tree $\mathcal{T}_2$ (with the constant given by the inverse of the scaling factor in~\eqref{eq:d1}). 
\subitem b) If $m_D = 1$ and $\Pr{\zeta \ge n} \sim c_D n^{-\eta}$ for $1 < \eta < 2$ and $c_D>0$, then $\gamma = 1/\eta$ and $\mathcal{T}_\alpha^\dec$ is the $\alpha$-stable tree decorated according to the stretched $\eta$-stable looptree  $(c_D q_0 |\eta(1- \eta)|)^{-1/\eta}  \mathcal{L}_\eta$.
\subitem c) If $0<m_D<1$ and $\Pr{\zeta=n} = f(n) n^{-\theta}$ for some constant $\theta > 2$ and a slowly varying function $f$, then $\gamma = 1$ and  $\mathcal{T}_\alpha^\dec$ is the $\alpha$-stable tree decorated according to the stretched circles $\frac{n(1 - \Ex{\zeta})}{q_0} S^1$. In other words, $\mathcal{T}_\alpha^\dec$ is distributed like  the stretched $\alpha$-stable loop tree $\frac{n(1 - \Ex{\zeta})}{q_0} \mathcal{L}_\alpha$. 

Here condition B1 (which is necessary for applying Theorem~\ref{th:invariance}) may be verified using Proposition~\ref{prop:small blobs don't contribute} and Remark~\ref{re:remarkleaves}. The necessary sufficiently high uniform integrability of the diameter of the decorations is trivial in case c), and follows from~\cite[Lem. 6.61, Sec. 6.1.3]{zbMATH07235577} in case a), and analogously from tail-bounds of conditioned BGW trees~\cite{kortchemski_sub_2017} in case b).

\item Finally, if $0<m_O < 1$ and $\Pr{\xi=n} = f_O(n) n^{-\theta_O}$ for some constant $\theta_O > 2$ and a slowly varying function $f$, then $O_n$ has giant $2$-connected component of size about $n (1- \Ex{\xi}) / \Pr{\xi=0}$.  Hence, if $D_n$ admits a scaling limit like in the three discussed cases, then the scaling limit for $M_n$ is, up to a constant multiplicative factor, the same as for $D_n$. See for example~\cite{zbMATH07138334} for details on such approximations. 

\end{itemize}

\subsection{Weighted planar maps with a boundary}

A planar map with a boundary refers to a planar map where the outer face typically plays a special role, and the perimeter (number of half-edges on the boundary of the outer face) serves as a size parameter. The reason for counting half-edges instead of edges is that both sides of an edge may be adjacent to the outer face, and in this case such an edge is counted twice. We say the boundary of a planar map is simple, if it is a cycle.  Here we explicitly allow the case of degenerate $2$-cycles and $1$-cycles. That is, a map consisting of two vertices joined by single edge has a simple boundary. A map consisting of a loop with additional structures on the inside has a simple boundary, and so has a map consisting of two vertices joined by two edges and additional structures on the inside.

Planar maps with a boundary fit into the framework of decorated trees by identical arguments as for outerplanar maps. That is, we may decompose a planar map into its components with a simple boundary in the same way as an outerplanar map may be decomposed into dissections of polygons. They may be bijectively encoded as trees decorated by maps with a simple boundary in the same way as illustrated in Figure~\ref{f:outerdec}. Since we allow multi-edges and loops, the leaves of the encoding tree canonically and bijectively correspond to the half-edges on the boundary of the map.

There are infinitely many planar maps with a fixed positive perimeter, hence it makes sense to assign summable weights. Say, for each planar map $D$ with a simple boundary we are given a weight $\gamma(D) \ge 0$. Like in Equation~\eqref{eq:omegao}, we then define the weight $\omega(M)$ of a planar map $M$ by 
\begin{align}
	\label{eq:omegam}
	\omega(M) = \prod_{D} \gamma(D),
\end{align}
with the index $D$ ranging over the components of $M$ with a simple boundary.
Thus, for any positive integer $n$ for which the sum of all $\omega$-weights of $n$-perimeter maps is finite and non-zero we may define the random $n$-perimeter map $M_n$ that is drawn with probability proportional to its weight. Note that this formally encompasses the random outerplanar map $O_n$, for which $\gamma(D) = 0$ whenever $D$ is not a dissection of a polygon of perimeter at least $3$.

Using the same definitions~\eqref{eq:defd}--\eqref{eq:defxi} for $m_O$ and $\xi$, it follows that the tree $T_n$ corresponding to the random map $M_n$ is distributed like the result of conditioning a $\xi$-BGW tree $T_n$ on having $n$ leaves.  Furthermore, employing analogous definitions for the decorations $(\tilde B_k,\tilde d_k,\tilde\rho_k,\tilde\ell_k,\tilde \nu_k)_{k \ge 0}$, it follows like in Lemma~\ref{le:outerdeco} that $M_n$ with distances rescaled by some scaling sequence $a_n>0$ is a discrete decorated tree:
\begin{corollary}
	The decorated tree $(T_n^\dec,d_n^\dec,\nu_n^\dec)$ is distributed like the random weighted  map $M_n$ with perimeter $n$, equipped with the graph distance multiplied by $a_n$ and the counting measure on the non-root vertices.
\end{corollary}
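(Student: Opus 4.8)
The plan is to prove this exactly as Lemma~\ref{le:outerdeco} was proved, replacing the polygon--dissection decomposition of outerplanar maps by the block decomposition of a general planar map with a boundary. First I would make the bijection explicit: following the boundary of the outer face of $M$ starting from the root edge, one records the maximal components $D$ with a simple boundary and their incidences at the shared boundary vertices. This produces a rooted plane tree $T$ together with, for each vertex $v$ of outdegree $k$, a planar map $D(v)$ with simple boundary of perimeter $k$ whose boundary half-edges are canonically cyclically ordered, the root vertex of $D(v)$ being the boundary vertex identified with the $v$-th boundary half-edge of $D(\mathrm{parent}(v))$. Conversely, gluing the $D(v)$ along $T$ by identifying the $i$-th boundary half-edge of $D(v)$ with the root of $D(vi)$ recovers $M$, and the $n$ leaves of $T$ are in canonical bijection with the $n$ boundary half-edges of $M$. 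This is precisely the correspondence illustrated in Figure~\ref{f:outerdec} (with dissections of polygons replaced by arbitrary simple-boundary maps); I would cite \cite{zbMATH07235577,zbMATH07138334} for the routine details, in particular for the fact that degenerate components ($1$-cycles and $2$-cycles) are handled correctly and that both sides of a boundary edge adjacent to the outer face contribute a leaf.

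Second, I would check compatibility of the weighting with this bijection. Since $\omega(M) = \prod_D \gamma(D)$ factorizes over the simple-boundary components, the pushforward of $\omega$ onto pairs $(T,(D(v))_v)$ splits as a branching weight on $T$ times conditionally independent Boltzmann weights on the decorations. By the same computation as in Proposition~\ref{pro:samplingmnw} (equivalently \cite[Lem.~6.7]{zbMATH07235577}), conditioning $M_n$ on having perimeter $n$, i.e. on $T$ having $n$ leaves, yields: a $\xi$-BGW tree $T_n$ conditioned on $n$ leaves, decorated — conditionally independently given $T_n$ — according to the family $(\tilde B_k,\tilde d_k,\tilde\rho_k,\tilde\ell_k,\tilde\nu_k)_{k\ge 0}$ with $\tilde B_k$ a Boltzmann-distributed simple-boundary map of perimeter $k$. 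This is exactly the discrete decorated tree of Section~\ref{s:disc_construction}.

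Third, I would verify that the metric and the measure agree. By construction of $d_n^\dec$ (the patch metric of Section~\ref{s:disc_construction}, see Figure~\ref{f:metric}), distances inside a single block $D(v)$ are the $a_n$-rescaled graph distances of $D(v)$, while any path between vertices in distinct blocks must pass through the boundary vertices separating them, each of which is exactly one gluing point $\ell_v(i)\sim\rho_{vi}$. Since every cut vertex of $M_n$ arises this way, a graph geodesic of $M_n$ between any two vertices decomposes as a concatenation of geodesics inside the blocks along the unique block-tree path joining them; hence $d_n^\dec$ equals $a_n$ times the graph distance of $M_n$. Likewise $\nu_n^\dec=\sum_v\nu_v$ is the counting measure on non-root vertices, each cut vertex counted once since its equivalence class is a single point in the quotient, and rooting $T_n^\dec$ at $\rho_\varnothing$ matches the root vertex of $M_n$.

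The step needing the most care is the geometric one: that graph geodesics in $M_n$ respect the block-tree structure, i.e. that $d_n^\dec$ coincides with the rescaled graph distance rather than merely dominating or being dominated by it. This rests on the elementary but slightly fiddly fact that a shortest path cannot profitably ``leave and re-enter'' a block through a separating vertex, combined with the bookkeeping for the degenerate simple-boundary components and the half-edge convention, so that conditioning $M_n$ on perimeter $n$ genuinely coincides with conditioning $T_n$ on $n$ leaves. All of this is routine once the bijection of Figure~\ref{f:outerdec} is set up, and I would defer the missing details to \cite{zbMATH07235577}.
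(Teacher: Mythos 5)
Your proposal is correct and follows essentially the same route as the paper, which itself gives no separate proof of this corollary beyond asserting that planar maps with a boundary decompose into simple-boundary components ``by identical arguments as for outerplanar maps'' and deferring, as in Lemma~\ref{le:outerdeco}, to \cite[Lem. 6.7]{zbMATH07235577}. Your write-up simply makes explicit the steps (bijection, weight factorization, cut-vertex argument for the metric, half-edge/leaf correspondence) that the paper leaves implicit.
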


The boundary of $M_n$ also fits into this framework  but its scaling limits have already been determined in pioneering works~\cite{Richier:2017,zbMATH07253709}, with the parameter $m_O$ and the tail of $\xi$ ultimately determining whether its a deterministic loop, a random loop tree, or the Brownian tree.  

In order to apply Theorem~\ref{th:invariance} to $M_n$ (as opposed to its boundary) we need to be able to look inside of the components, that is, we need a description of the asymptotic geometric shape of Boltzmann planar maps with a large simple boundary. However no such results appear to be known.

What is known by~\cite{Richier:2017} is that  so-called non-generic critical face weight sequences with parameter $\alpha' = ]1, 3/2[$ lie in a stable regime with $m_O = 1$ and $T_n^\dec$ in the universality class of an $\alpha$-stable tree for $\alpha = 1 / (\alpha' - 1/2)$. The boundary of $M_n$ lies in the universality class of an $\alpha$-stable looptree by~\cite[Thm. 1.1]{Richier:2017}. It is natural to wonder whether additional knowledge of Boltzmann planar maps with a simple boundary in this regime enables the application of Theorem~\ref{th:invariance}. This  motivates the following question:

\begin{question}
	Do decorated stable trees constructed in the present work arise as scaling limits of face-weighted Boltzmann planar maps with a boundary?
\end{question}

We note that there is a connection to the topic of stable planar maps arising as scaling limits (along subsequences) of Boltzmann planar maps without a boundary~\cite{zbMATH06469338,zbMATH06932734}. Roughly speaking,  Boltzmann planar maps with a large boundary are thought to describe the asymptotic geometric behaviour of macroscopic faces of Boltzmann planar maps without a boundary.

There are results for related  models of triangulations and quadrangulations with a simple boundary with an additional weight on the vertices~\cite{zbMATH07039779, zbMATH07343343}. Scaling limits for models with a non-simple boundary have  been determined for the special case of uniform quadrangulations with a boundary~\cite{MR3335010,zbMATH07212165} that are condition on both the number of vertices and the boundary length.

For Boltzmann triangulations  with the vertex weights as in~\cite{zbMATH07343343} we would expect to be in the regime  $m_O<1$ where the shape of the map is dominated by a unique macroscopic component with a simple boundary. However, we could introduce block-weights as before  in order to force the model into a stable regime. At least in principle, Theorem~\ref{th:invariance} then yields convergence towards a decorate stable tree obtained by blowing up the branchpoints of a stable tree by rescaled Brownian discs. Checking the requirements of  Theorem~\ref{th:invariance} (such as verifying convergence of the moments of the rescaled diameter of Boltzmann triangulations with a simple boundary) does not appear involve any mayor obstacles, but we did not go through the details. To do so, we would need to recall extensive combinatorial background, and this appears to be beyond the scope of the present work.



\appendix
\section{Appendix}
This appendix contains the proofs of two technical statements that are used in the paper, Proposition~\ref{prop:small blobs don't contribute} and Lemma~\ref{lem:moment measure discrete block implies B2 or B3}, which ensure that the main assumptions under which the rest of the paper is stated are satisfied for reasonable models of decorated BGW trees. 
Section~\ref{subsec:regularly varying functions and doa} recalls some general results about regularly varying functions and domain of attraction of stable random variables. Section~\ref{subsec:estimates for BGW} presents some estimates related to critical BGW trees whose reproduction law is in the domain of attraction of a stable law that are useful later on.
In Section~\ref{subsec:spine decomposition with marks}, we introduce the notion of trees with marks on the vertices and prove a spine decomposition result for those marked trees.
Finally in Section~\ref{subsec:small blobs do not contribute} we prove Proposition~\ref{prop:small blobs don't contribute}, which is the main technical result of this Appendix, using all the results and estimates derived before.
At the end, in Section~\ref{subsec:proof of lemma about measure}, which is independent of what comes before, we prove Lemma~\ref{lem:moment measure discrete block implies B2 or B3}. 

\subsection{Regularly varying functions and domains of attraction}\label{subsec:regularly varying functions and doa}
\subsubsection{Compositional inverse of regularly varying functions}
Following standard terminology, we say that a function $f$ defined on a neighbourhood of infinity is regularly varying (at infinity) with exponent $\alpha\in \R$ if for every $\lambda\in \R\setminus \{0\}$ we have
\begin{align*}
	\frac{f(\lambda x)}{f(x)}\rightarrow \lambda^\alpha.
\end{align*} 
We consider those functions up to the equivalence relation of having a ratio tending to $1$ at infinity. When the index of regularity is positive $\alpha >0$, a regularly varying function $f$ with exponent $\alpha$ tends to infinity and we can define (at least at a neighbourhood of infinity):
\begin{align*}
	f^{[-1]}(x):=\inf\enstq{y\in\R_+}{f(y)\geq x},
\end{align*}
and the equivalence class of $f^{[-1]}$ only depends on the equivalence class of $f$. Then $f^{[-1]}$ is a regularly varying function with index $\alpha^{-1}$ and satisfies:
\begin{align*}
	f\circ f^{[-1]}(x)\underset{x \rightarrow\infty}{\sim} f^{[-1]}\circ f (x)\underset{x \rightarrow\infty}{\sim} x.
\end{align*}

\subsubsection{Asymmetric stable random variable}
For $\alpha\in\intervalleoo{0}{1}\cup \intervalleoo{1}{2}$, we let $Y_\alpha$ be a random variable with so-called asymmetric stable law of index $\alpha$, with distribution characterized by its Laplace transform, for all $\lambda>0$,
\begin{align*}
	\Ec{\exp\left(-\lambda Y_\alpha \right)}&= \exp\left(-\lambda ^\alpha \right) \quad \text{if}\quad 0<\alpha <1,\\
	&=\exp\left(\lambda ^\alpha \right) \quad \text{if}\quad 1<\alpha <2.
\end{align*}
It is ensured by \cite[Eq.(I20)]{zolotarev_one_1986} that those distributions have a density $d_\alpha$ and that this density is continuous and bounded. 

\paragraph{Domain of attraction.}
Let $X$ be a random variable such that $\Pp{\abs{X}>x}$ is regularly varying with index $-\alpha$, centred if $\alpha\in\intervalleoo{1}{2}$. 
Consider a sequence $X_1, X_2,\dots$ of i.i.d random variables with the law of $X$. Suppose that 
\begin{align*}
	\frac{\Pp{X>x}}{\Pp{\abs{X}>x}}\rightarrow 1.
\end{align*}
Then, let
\begin{align}\label{eq:regularly varying function associated to X}
	B_X(x):=\abs{\frac{1-\alpha}{\Gam{2-\alpha}}}^{-\frac{1}{\alpha}} \left(\frac{1}{\Pp{\abs{X}\geq x}}\right)^{[-1]}.
\end{align}
We also denote this function by $B_\nu$ if $\nu$ is the law of $X$.
Since $x\mapsto \frac{1}{\Pp{\abs{X}\geq x}} $ is $\alpha$-regularly varying, $B_X$ is $\alpha^{-1}$-regularly varying. Then, by \cite[Theorem~4.5.1]{whitt_stochastic_2002}, we have:
\begin{align*}
	\frac{1}{B_X(n)}\cdot \sum_{i=1}^{n}X_i \tend{n\rightarrow\infty}{(d)} Y_\alpha,
\end{align*}
where $Y_\alpha$ has the asymmetric $\alpha$-stable law, which we recall has a density $d_\alpha$.
In fact, if the random variable $X$ is integer-valued (and not supported on a non-trivial arithmetic progression), a more precise version of the above convergence known as \emph{local limit theorem}, is true, see \cite[Theorem~4.2.1]{MR0322926}. 
Let $S_n=\sum_{i=1}^n X_i$, then we have
\begin{align}\label{eq:stable local limit theorem}
	 \sup_{k\in \Z} \Big| B_X(n)\Pp{S_n=k} - d_\alpha\left(\frac{k}{B_X(n)}\right) \Big| \underset{n\rightarrow\infty}{\rightarrow} 0.
\end{align}
Using the fact that the asymmetric $\alpha$-stable density $d_\alpha$ is bounded, note that the above convergence ensures in particular that there exists a constant $C$ so that for all $n\geq 1$ and all $k\in \Z$ we have
\begin{align}\label{eq:uniform bound for the probability Sn=k}
	\Pp{S_n=k} \leq \frac{C}{B_X(n)}.
\end{align}

\subsubsection{The Potter bounds.}
From \cite[Theorem~1.5.6(iii)]{bingham_regular_1989}: for $f$ a regularly varying function of index $\rho$, then for every $A>1$ and $\epsilon$, there exists $B$ such that for all $x,y\geq B$ we have 
\begin{equation}
\frac{f(y)}{f(x)}\leq A \cdot  \max \left\lbrace\left(\frac{y}{x}\right)^{\rho-\epsilon},\left(\frac{y}{x}\right)^{\rho+\epsilon}\right\rbrace.
\end{equation}
When $f$ is defined on the whole interval $\intervalleoo{0}{\infty}$ and bounded below by a positive constant, we can increase the constant $A$ so that the last display holds for any $x,y\geq 1$. 

Let us apply this to $B_X$, the slowly varying function associated to some positive random variable $X$ in the domain of attraction of an $\theta$-stable law.
For all $\epsilon>0$ there exists a constant $C$ such that for all $n\geq 1$ for all $\frac{1}{B_X(n)}\leq \lambda \leq 1$,
\begin{align}\label{eq:potter bounds applied to B-1(lambda B_n)}
C^{-1}\cdot n\cdot \lambda^{\theta+\epsilon}\leq B_X^{[-1]}(\lambda  B_X(n)) \leq C\cdot n\cdot \lambda^{\theta-\epsilon},
\end{align}  
and so that in particular, possibly for another constant $C$,
\begin{align}
C^{-1}\cdot \frac{1}{n}\cdot \lambda^{-\theta+\epsilon}\leq \Pp{X\geq \lambda  B_X(n)} \leq C\cdot \frac{1}{n}\cdot \lambda^{-\theta-\epsilon},
\end{align}  

\subsection{Estimates for Bienaymé--Galton--Watson trees with $\alpha$-stable tails}\label{subsec:estimates for BGW}\label{subsec:the case of BGW}
Let $\mu$ be a critical reproduction law in the domain of attraction of an $\alpha$-stable distribution, with $\alpha\in \intervalleoo{1}{2}$.
Recall that $d_\alpha$ is the density function of the random variable $Y_\alpha$.
The following lemma contains all the results that we need in the subsequent sections of the appendix. 
\begin{lemma}\label{lem:application of the potter bounds}
Let $D$ be a random variable with distribution $\mu^*$ the size-biaised version of $\mu$ and $(\tau_i)_{i\geq 1}$ be independent BGW trees with reproduction law $\mu$. 
	Then, for any $\eta>0$, there exists a constant $C$ such that for all $n\geq 1$ and for $\lambda\in \intervalleff{\frac{1}{B_n}}{1}$, we have
	\begin{enumerate}[(i)]
		\item \label{it:probability of degree being large} $\displaystyle \Pp{D\geq \lambda B_n}\leq C \cdot \lambda^{1-\alpha-\eta} \cdot \frac{B_n}{n},$
		\item \label{it:probability total size of Bn GW is k} For all $k\in \mathbb Z$,\  \[\displaystyle\Pp{\sum_{i=1}^{\lambda B_n}\abs{\tau_i}=k} \leq C\cdot \frac{ \lambda^{-\alpha-\eta}}{n}.\]
		\item \label{it:expectation inverse total size of GW} $\displaystyle\Ec{\frac{1}{\sum_{i=1}^{\lambda B_n}\abs{\tau_i}}} \leq C\cdot \frac{ \lambda^{-\alpha-\eta}}{n},$
	\end{enumerate}
\end{lemma}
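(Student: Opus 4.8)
The plan is to deduce all three estimates from the Potter-type bound \eqref{eq:potter bounds applied to B-1(lambda B_n)}, applied throughout with $X$ of law $\mu$ and $\theta=\alpha$, writing $B_n=B_\mu(n)$ for the scaling sequence attached to $\mu$; the slack $\eta$ is exactly what absorbs the slowly varying factors hidden in $B_\mu$. First I would record two preliminaries. By Kemperman's formula, with $S_k=\sum_{l=1}^{k}(\xi_l-1)$ and $(\xi_l)$ i.i.d.\ of law $\mu$, one has $\Pp{\sum_{i=1}^{j}\abs{\tau_i}=k}=\frac{j}{k}\Pp{S_k=-j}$ for $k\ge j\ge 1$, and in particular $\Pp{\abs{\tau}=k}=\frac1k\Pp{S_k=-1}$. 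Since $\xi-1$ has the same tail as $\mu$ and the limiting spectrally positive $\alpha$-stable density is continuous and positive at $0$ (see \cite{zolotarev_one_1986}), the local limit theorem \eqref{eq:stable local limit theorem} gives $\Pp{S_k=-1}\asymp 1/B_\mu(k)$, hence $\Pp{\abs{\tau}=k}\asymp 1/(kB_\mu(k))$ and, by Karamata's theorem, $\Pp{\abs{\tau}\ge m}\asymp 1/B_\mu(m)$. Thus $\abs{\tau}$ is in the domain of attraction of a $(1/\alpha)$-stable law, with normalising sequence $\beta_j:=B_{\abs{\tau}}(j)\asymp B_\mu^{[-1]}(j)$ by \eqref{eq:regularly varying function associated to X} (if $\abs{\tau}$ is carried by a proper sublattice one argues on that sublattice, which only affects constants).

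For part (i), set $m=\lceil\lambda B_n\rceil$ and use $\mu^*(\{k\})=k\mu(\{k\})$ together with Abel summation and Karamata's theorem (the tail $\mu([\,\cdot\,,\infty))$ being regularly varying of index $-\alpha<-1$) to get $\Pp{D\ge m}=\sum_{k\ge m}k\mu(\{k\})\le C\,m\,\mu([m,\infty))\le 2C\,\lambda B_n\,\mu([\lambda B_n,\infty))$, using $1\le m\le 2\lambda B_n$ and monotonicity of the tail; then \eqref{eq:potter bounds applied to B-1(lambda B_n)} bounds $\mu([\lambda B_n,\infty))$ by $Cn^{-1}\lambda^{-\alpha-\eta}$, which gives the claim. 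For part (ii), applying the uniform local limit bound \eqref{eq:uniform bound for the probability Sn=k} to the i.i.d.\ sequence $(\abs{\tau_i})_{i\ge1}$ yields $\Pp{\sum_{i=1}^{j}\abs{\tau_i}=k}\le C/\beta_j$ uniformly in $k$ and $j\ge1$; taking $j=\lceil\lambda B_n\rceil$, using that $B_\mu^{[-1]}$ is non-decreasing and \eqref{eq:potter bounds applied to B-1(lambda B_n)}, one has $\beta_j\ge c\,B_\mu^{[-1]}(\lambda B_n)\ge c'\,n\lambda^{\alpha+\eta}$, which gives the claim.

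For part (iii) the plan is to use generating functions. Let $\phi(x)=\Ec{x^{\abs{\tau}}}$, which solves $\phi(x)=x\,g_\mu(\phi(x))$ with $g_\mu$ the offspring generating function, so that $\Ec{x^{\sum_{i=1}^{j}\abs{\tau_i}}}=\phi(x)^j$ and, by Tonelli, $\Ec{\big(\sum_{i=1}^{j}\abs{\tau_i}\big)^{-1}}=\int_0^1 x^{-1}\phi(x)^j\,dx$. Since $\phi$ is non-decreasing with $\phi(x)\le x$, the contribution of $x\in[0,1-\delta_0]$ is at most $C\bar{\rho}^{\,j}$ for a suitable $\bar{\rho}=\bar{\rho}(\delta_0)<1$. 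For $x\in[1-\delta_0,1]$ I would use a Tauberian estimate that, from $\Pp{\abs{\tau}\ge m}\asymp 1/B_\mu(m)$, gives $1-\phi(x)\ge c/B_\mu\!\big(1/(1-x)\big)$ near $1$; then $\phi(x)^j\le\exp\!\big(-cj/B_\mu(1/(1-x))\big)$, and after the substitution $v=1-x$ the contribution is at most $\int_0^{\delta_0}\exp\!\big(-cj/B_\mu(1/v)\big)\,dv$, a Laplace-type integral whose phase is regularly varying of index $1/\alpha>0$; a change of variables $v=w/\beta_j$ together with \eqref{eq:potter bounds applied to B-1(lambda B_n)} (which shows $B_\mu(\beta_j/w)\asymp j\,w^{-1/\alpha}$ on the relevant range) bounds it by $C/\beta_j$. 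Hence $\Ec{\big(\sum_{i=1}^{j}\abs{\tau_i}\big)^{-1}}\le C\bar{\rho}^{\,j}+C/\beta_j\le C'/\beta_j$, and one concludes as in part (ii) with $j=\lceil\lambda B_n\rceil$.

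The step I expect to be the main obstacle is part (iii). The tempting shortcut—combining part (ii) for small $k$ with Kemperman's formula and the local limit theorem for $S$ for large $k$—produces a spurious factor $\log n$, because when $k$ ranges between $j$ and $\beta_j$ the target $-j$ is a (moderate) large deviation for $S_k$, where the local limit bound $\Pp{S_k=-j}\le C/B_\mu(k)$ is far too lossy. The generating-function computation above avoids this, and the Potter bound \eqref{eq:potter bounds applied to B-1(lambda B_n)} is precisely what allows the $\eta$'s to absorb the slowly varying corrections in $B_\mu$.
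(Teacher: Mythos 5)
Your parts (i) and (ii) follow the paper's proof essentially verbatim: the paper also reduces (i) to $\Pp{D\geq m}\leq C\,m\,\mu(\intervallefo{m}{\infty})$ (quoting \cite[Eq.(45)]{kortchemski_sub_2017}, which is exactly your Abel-summation/Karamata step) and then applies the Potter bounds, and for (ii) it likewise establishes $\Pp{\abs{\tau}\geq m}\asymp 1/B_\mu(m)$, deduces $B_{\abs{\tau}}\asymp B_\mu^{[-1]}$ via \eqref{eq:regularly varying function associated to X}, and invokes the uniform local limit bound \eqref{eq:uniform bound for the probability Sn=k} followed by \eqref{eq:potter bounds applied to B-1(lambda B_n)}. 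Your aside about a possible sublattice for $\abs{\tau}$ is a point the paper glosses over, so no loss there.

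Part (iii) is where you genuinely diverge, and your route is correct but heavier than necessary. The paper isolates an elementary auxiliary statement (Lemma~\ref{lem:expectation of inverse of sum of asymmetric heavy tailed rv}): for i.i.d.\ $X_i\geq 1$ in the domain of attraction of a $\theta$-stable law, $\theta\in(0,1)$, one bounds $\sum_{i=1}^N X_i\geq \max_i X_i$, writes $\Ec{B_X(N)/\max_i X_i}=\int_0^\infty \Pp{\max_i X_i< x^{-1}B_X(N)}\,\mathrm{d}x$, and uses $\Pp{\max_i X_i< x^{-1}B_X(N)}\leq \exp(-N\Pp{X\geq x^{-1}B_X(N)})\leq \exp(-Cx^{\theta/2})$ via the Potter bounds, which is integrable; this gives $\Ec{(\sum X_i)^{-1}}\leq C/B_X(N)$ using only the tail of $X$. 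Your generating-function argument reaches the same bound $C/\beta_j$ but needs the fixed-point equation $\phi=x\,g_\mu(\phi)$, a Karamata Tauberian step to convert the tail $\Pp{\abs{\tau}\geq m}\asymp 1/B_\mu(m)$ into $1-\phi(x)\gtrsim 1/B_\mu(1/(1-x))$, and a uniform Laplace-type estimate; each step is sound (the Potter bounds do make the substitution $v=w/\beta_j$ uniform over the relevant range), but the max-based bound buys the same conclusion with far less machinery and in greater generality. Your closing diagnosis of why the naive Kemperman-plus-local-limit shortcut loses a logarithm is accurate, and both your argument and the paper's are designed precisely to sidestep that moderate-deviation regime.
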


\begin{proof}[Proof of Lemma~\ref{lem:application of the potter bounds}]
	We first prove (\ref{it:probability of degree being large}).
	We have using \cite[Eq.(45)]{kortchemski_sub_2017}
	\begin{align*}
		\Pp{D\geq k} = \mu^*(\intervallefo{k}{\infty}) \underset{k\rightarrow\infty}{\sim} \frac{\alpha}{\alpha-1 }\cdot k \cdot \mu(\intervallefo{k}{\infty}),
	\end{align*}
	so for $C$ a constant chosen large enough we get that for all $k\geq 1$,
	\begin{align*}
		\Pp{D\geq k} \leq C\cdot k \cdot  \mu(\intervallefo{k}{\infty}).
	\end{align*}
	Applying this to $k=\lceil\lambda B_n\rceil$ and using the Potter bounds yields, for a value of $C$ that may change from line to line,
	\begin{align*}
		\Pp{D\geq \lambda B_n} \leq C\cdot \lceil\lambda B_n\rceil\cdot  \mu(\intervallefo{\lambda B_n}{\infty})
		&\leq C\cdot \lceil\lambda B_n\rceil\cdot  \frac{1}{n}\cdot \lambda^{-\alpha-\eta}\\
		&\leq C\cdot\frac{B_n}{n}\cdot \lambda^{1-\alpha-\eta}.
	\end{align*}
	So (\ref{it:probability of degree being large}) is proved. Now let us turn to (\ref{it:probability total size of Bn GW is k}).
	Using, for example, \cite[Eq.(26)]{kortchemski_sub_2017}, we have
	\begin{align}\label{eq:probability that a BGW has n vertices}
	\Pp{\abs{\tau}=n}\underset{n\rightarrow\infty}{\sim} \frac{d_\alpha(0)}{n B_\mu(n)}
\quad \text{  and  so } \quad
	\Pp{\abs{\tau}\geq n}\underset{n\rightarrow\infty}{\sim} \frac{\alpha d_\alpha(0)}{B_\mu(n)}.
	\end{align}
	This ensures that the random variable $\abs{\tau}$ is in the domain of attraction of the asymmetric $1/\alpha$-stable random variable.
	We can also check from the last display and the definition \eqref{eq:regularly varying function associated to X} that 
	\begin{align}\label{eq:relation Babstau to Bmu}
	B_{\abs{\tau}}(n)\sim C\cdot  B_\mu^{[-1]}(n),
	\end{align}
	for some constant $C$. Using \eqref{eq:uniform bound for the probability Sn=k} we then get 
	\begin{align*}
		\Pp{\sum_{i=1}^{\lambda B_n}\abs{\tau_i}=k} \leq \frac{C}{B_\mu^{[-1]}(\lambda B_n)} \leq C\cdot \frac{ \lambda^{-\alpha-\eta}}{n},
	\end{align*}
	where the last inequality follows from \ref{eq:potter bounds applied to B-1(lambda B_n)}. This finishes the proof of \ref{it:probability total size of Bn GW is k}.
The last point (\ref{it:expectation inverse total size of GW}) follows from an application of Lemma~\ref{lem:expectation of inverse of sum of asymmetric heavy tailed rv} stated below to the distribution of $\abs{\tau}$ using \eqref{eq:relation Babstau to Bmu} and an application of the Potter bounds to conclude. 
\end{proof}

\begin{lemma}\label{lem:expectation of inverse of sum of asymmetric heavy tailed rv}
	Let $X$ be a distribution on $\intervallefo{1}{\infty}$, in the domain of attraction of an asymmetric $\theta$-stable law, with $\theta\in\intervalleoo{0}{1}$, and $X_1, X_2,\dots$ i.i.d random variables with the law of $X$. Then, there exists a constant $C$ such that for all $N\geq 1$,
	\begin{align*}
		\Ec{\frac{1}{\sum_{i=1}^{N}X_i}} \leq \frac{C}{B_X(N)}.
	\end{align*}
\end{lemma}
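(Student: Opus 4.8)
The plan is to exploit the fact that $1/s = \int_0^\infty e^{-s\lambda}\,d\lambda$ for $s>0$, so that
\[
\Ec{\frac{1}{\sum_{i=1}^N X_i}} = \int_0^\infty \Ec{\exp\Big(-\lambda \sum_{i=1}^N X_i\Big)}\,d\lambda = \int_0^\infty \big(\varphi(\lambda)\big)^N\,d\lambda,
\]
where $\varphi(\lambda)=\Ec{e^{-\lambda X}}$ is the Laplace transform of $X$. The key point is that since $X$ is in the domain of attraction of an asymmetric $\theta$-stable law with $\theta\in(0,1)$, classical Tauberian-type estimates (see e.g.\ \cite{bingham_regular_1989}) give $1-\varphi(\lambda) \sim c\,\lambda^\theta L(1/\lambda)$ as $\lambda\to 0^+$, for a slowly varying $L$ with $1/\Pp{X\geq x}$ regularly varying of index $\theta$; more precisely $1-\varphi(\lambda)$ is comparable to $1/B_X^{[-1]}(1/\lambda)$ up to constants. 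Hence there is $c_0>0$ and $\lambda_0>0$ with $\varphi(\lambda)\leq \exp(-c_0(1-\varphi(\lambda)))\leq \exp(-c_0/B_X^{[-1]}(1/\lambda))$ for $\lambda\in(0,\lambda_0)$, using $1-u\leq e^{-u}$ and monotonicity.

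Next I would split the integral at $\lambda_1 := 1/B_X(N)$. On the range $\lambda \geq \lambda_1$ (intersected with $(0,\lambda_0)$ and with a trivial tail bound for $\lambda\geq \lambda_0$, where $\varphi(\lambda)\leq \varphi(\lambda_0)<1$ so the integrand is exponentially small in $N$), one uses that $B_X^{[-1]}(1/\lambda)\leq B_X^{[-1]}(B_X(N)) \sim N$ is \emph{not} what we want — rather, for $\lambda\geq \lambda_1$ we have $1/\lambda\leq B_X(N)$, so $B_X^{[-1]}(1/\lambda)$ is at most of order $N$, giving $(\varphi(\lambda))^N\leq \exp(-c_0 N/B_X^{[-1]}(1/\lambda))$, and after the change of variables $x = B_X^{[-1]}(1/\lambda)$ (so $\lambda = 1/B_X(x)$, $d\lambda = -B_X'(x)/B_X(x)^2\,dx$, and $B_X$ is regularly varying of index $1/\theta$) this integral is $\asymp \int_1^{CN} e^{-c_0 N/x}\,\frac{dx}{x B_X(x)}$; bounding $e^{-c_0N/x}$ by $1$ and using regular variation of $B_X$ together with Potter's bounds shows this is $O(1/B_X(N))$. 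On the complementary range $0<\lambda<\lambda_1$ we simply bound $(\varphi(\lambda))^N\leq 1$, so that part contributes at most $\lambda_1 = 1/B_X(N)$, which is exactly of the desired order.

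The main obstacle is making the constants in the estimate $\varphi(\lambda)\leq \exp(-c_0/B_X^{[-1]}(1/\lambda))$ uniform and correctly relating $1-\varphi(\lambda)$ to the regularly varying sequence $B_X$: this is a standard but slightly delicate Karamata/de Haan Tauberian argument, and care is needed because $X$ is only assumed to be in the \emph{domain of attraction} (so slowly varying corrections are present) rather than exactly stable. Once that comparison and the accompanying Potter bounds are in place, the change of variables and the integral estimate are routine. An alternative, perhaps cleaner, route that avoids Laplace transforms is to write $\Ec{1/S_N} = \int_0^\infty \Pp{S_N \leq 1/t}\,dt$ and use the concentration estimate $\Pp{S_N\leq x}\leq \Pp{S_N = O(x)} \lesssim x/B_X^{[-1]}(N)$ coming from the local limit theorem \eqref{eq:stable local limit theorem} for $S_N$ (valid in the integer-valued case, which is the one needed in Lemma~\ref{lem:application of the potter bounds}) together with a crude bound $\Pp{S_N \leq x}\leq \Pp{X_1 \leq x}^N$ that decays exponentially once $x \ll B_X^{[-1]}(N)$; integrating then again yields the $O(1/B_X(N))$ bound after recalling $B_X^{[-1]}(N)\asymp B_{|\tau|}(N)$-type identities. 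I would present the Laplace-transform argument as the main proof since it does not require $X$ to be lattice-valued.
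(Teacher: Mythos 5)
Your route (writing $\Ec{1/S_N}=\int_0^\infty \varphi(\lambda)^N\,d\lambda$ and controlling $1-\varphi$ by a Karamata Tauberian estimate) is genuinely different from the paper's, which simply bounds $\sum_{i=1}^N X_i\geq \max_{1\leq i\leq N}X_i$, writes $\Ec{B_X(N)/\max_i X_i}$ as a tail integral, uses $X\geq 1$ to truncate it at $x=B_X(N)$, and applies the lower Potter bound to get an integrand $\leq \exp(-Cx^{\theta/2})$. However, your argument has a concrete error at its key step. In the integral
\begin{align*}
\int_1^{CN} e^{-c_0 N/x}\,\frac{dx}{x\,B_X(x)},
\end{align*}
you cannot bound $e^{-c_0N/x}$ by $1$: the function $1/(xB_X(x))$ is regularly varying of index $-1-1/\theta<-1$, so $\int_1^{CN}\frac{dx}{xB_X(x)}$ converges to the \emph{positive constant} $\int_1^\infty\frac{dx}{xB_X(x)}$ as $N\to\infty$ — its mass is concentrated near the lower endpoint $x=O(1)$, not near $x\asymp N$. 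Discarding the exponential therefore yields only $\Ec{1/S_N}=O(1)$, which is trivially true but is not the claimed bound $O(1/B_X(N))$.

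The gap is repairable, because the exponential factor is exactly what suppresses the contribution of small $x$: substituting $x=Ny$ gives $\int_{1/N}^{C} e^{-c_0/y}\,\frac{dy}{y\,B_X(Ny)}$, and the Potter bounds $B_X(Ny)\geq c\,B_X(N)\,y^{1/\theta\pm\epsilon}$ together with $\int_0^1 e^{-c_0/y}y^{-a}\,dy<\infty$ for every $a$ produce the desired $O(1/B_X(N))$. You would also need to make the comparison $1-\varphi(\lambda)\asymp 1/B_X^{[-1]}(1/\lambda)$ uniform on $(0,\lambda_0)$, which is standard but must be stated carefully. Once repaired, your proof is valid and arguably more robust, but the paper's max-based argument avoids Laplace transforms and Tauberian theory entirely and is considerably shorter; your alternative sketch via the local limit theorem is also workable but, as you note, needs the lattice assumption that the lemma as stated does not impose.
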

\begin{proof}
	Write
	\begin{align*}
		\Ec{\frac{1}{\sum_{i=1}^{N}X_i}} = \frac{1}{B_X(N)} \cdot \Ec{\frac{B_X(N)}{\sum_{i=1}^{N}X_i}}\leq \frac{1}{B_X(N)} \cdot \Ec{\frac{B_X(N)}{\max_{1\leq i \leq N}X_i}}.
	\end{align*}
	Then 
	\begin{align*}
		\Ec{\frac{B_X(N)}{\max_{1\leq i \leq N}X_i}} &= \int_{0}^{\infty}\Pp{\frac{B_X(N)}{\max_{1\leq i \leq N}X_i}>x} \mathrm{d} x=\int_{0}^{\infty}\Pp{\max_{1\leq i \leq N}X_i<x^{-1}\cdot B_X(N)} \mathrm{d} x.
	\end{align*}
	Since the values of $X_i$ are at least $1$, the integrand of the last display becomes $0$ when $x>B_X(N)$.
	But when $x\leq B_X(N)$ we have
	\begin{align*}
		\Pp{\max_{1\leq i \leq N}X_i<x^{-1}\cdot B_X(N)} &\leq \Pp{X<x^{-1}\cdot B_X(N)}^N\\
		&\leq \left(1 - \Pp{X\geq x^{-1}B_X(N)}\right)^N\\
		&\leq \exp\left(-N\cdot\Pp{X\geq x^{-1}B_X(N)}\right)\\
		&\leq \exp\left(-N \cdot C\cdot \frac{x^{\theta/2}}{N}\right)\leq \exp\left(-C\cdot x^{\theta/2}\right),
	\end{align*}
	which is integrable over $\R$. In the last line, we use that
	\begin{equation*}
		\Pp{X\geq x^{-1}B_X(N)}\geq C \cdot \frac{1}{N}\cdot  (x^{-1})^{-\frac{\theta}{2}}
	\end{equation*}
	for some constant $C$ that does not depend on $N$, thanks to the Potter bounds with $\epsilon=\theta/2$, which apply here because $x^{-1}\geq \frac{1}{B_X(N)}.$
\end{proof}

\subsection{Spine decomposition for marked Bienaymé--Galton--Watson trees}\label{subsec:spine decomposition with marks}

\paragraph{Definitions.}
We first recall a few definitions from Section~\ref{subsec:plane trees}. 
We have $\UHT=\bigcup_{n\geq0} \N ^n$ the Ulam tree, and we denote $\bT$ the set of plane trees.
For any $\bt\in\bT$, and $v\in \bt $, we define $\theta_v(\bt)=\enstq{u\in\bU}{vu\in \bt}$. 
For $w\in \bU$, we let $w\bt = \enstq{wv}{v\in\bt}$.
If $\bt\in\bT$ and $u\in \bt$, then we define $\out{\bt}(u)$ to be the number of children of $u$ in $\bt$. 
We drop the index if it is clear from the context. 
If $v=v_1\dots v_n$ we say that $\abs{v}=n$ is the height of $v$, and for any $k\leq n$, we set $\left[v\right]_k=v_1\dots v_k$. 
If $v\neq\emptyset$ we also let $\hat{v}=\left[v\right]_{n-1}$ be the father of $v$.
We define $\bT^*=\enstq{(\bt,v)}{\bt\in\bT, \ v\in \bt}$ the set of plane trees with a marked vertex. 

Let $\mu$ be a probability measure on the integers, such that $\sum_{k\geq 0}k\mu(k)=1$. 
The associated probability measure $\BGW_\mu$ on plane trees is the measure such that for $T$ a random tree taken under this measure, we have for every $\bt\in\bT$, finite:
\begin{equation}
	\Pp{T=\bt}=\BGW_\mu(\{\bt\})=\prod_{u\in \bt}\mu(\out\bt(u)).
\end{equation}
\paragraph{Marks on a tree.}
Let $E$ be any measurable space. 
A \emph{finite tree with marks} is an ordered pair $\tilde{\bt}=(\bt,(x_u)_{u\in \bt})$ where $\bt\in\bT$ and $(x_u)_{u\in\bt}$ are elements of $E$ (corresponding to marks of the vertices). 
We denote by $\widetilde{\bT}$ the space of marked trees, and similarly, we let $\widetilde{\bT}^*$ the set of marked trees with a distinguished vertex. 

Let $(\pi_k)_{k\geq 0}$ be a sequence of probability measures on $E$. 
A natural way to put random marks on a tree $\bt$ is to mark it with $(X_u)_{u\in\bt}$ that is a family of independent random variables with respective distribution $X_u\sim \pi_{\out{\bt}(u)}$. 

\paragraph{Cutting a tree at a vertex.}
If $\bt\in\bT$, and $v\in \bt $, we define:
\begin{align}\label{eq:definition Cut}
	\Cut{\bt}{v}&:=\bt\setminus\enstq{vu}{u\in\bU,\ u\neq\emptyset}
	=\bt\setminus (v\theta_v \bt) \cup \{v\}.
\end{align}
\paragraph{The Kesten tree.}
We let 
\begin{itemize}
	\item $\left(D_i\right)_{i\geq 0}$ be a sequence of i.i.d. random variables with distribution $\mu^*$, where $\mu^*(k)=k\mu(k)$, for any $k\geq 0$,
	\item $(K_i)_{i\geq 1}$, which conditionally on the sequence $\left(D_i\right)$ are uniform on $\intervalleentier{1}{D_i}$,
	\item $\mathbf{U}_n^*=K_1K_2\dots K_n$.
	\item $\left(\tau_u\right)_{u\in\bU}$ are $\BGW$ trees with reproduction law $\mu$.
	\item $\cH=\enstq{\mathbf{U}_n^* i}{n\geq 0,\ i\in \intervalleentier{1}{D_i}\setminus K_i}$.
\end{itemize} 
Then the Kesten tree is defined as 
\begin{align*}
	\KT^*=\enstq{\mathbf{U}_n^*}{n\geq 0}\cup \bigcup_{u\in\cH}u\tau_u.
\end{align*}
\paragraph{Cut Kesten tree.}
Let $\tau$ be a BGW tree with offspring distribution $\mu$, independent of everything else. We let:
\begin{align*}
	\KT^*_k:=\Cut{\KT^*}{\mathbf{U}_k^*}\cup (\mathbf{U}_k^*\tau),
\end{align*}
which is almost surely a finite tree. 

As before, we endow the finite random tree $\KT^*_k$ with some marks $(X_u)_{u\in \KT^*_k}$ such that conditionally on $\KT^*_k$, the marks are independent with respective conditional distribution $X_u\sim \pi_{\out{}(u)}$. We denote by $\widetilde{\KT}^*_k$ the obtained tree with marks.
\paragraph{The ancestral line of a vertex.}
We let 
 \begin{align*}
	\bA:=\bigcup_{n\geq1}(\N_0\times E)^n.
\end{align*}
If $\tilde{\bt}=(\bt,(x_u)_{u\in\bt})\in\bT$, and $v=v_1v_2\dots v_k\in \bt$  we define:
\begin{align*}
	A(\tilde{\bt},v):=\left((\out{\bt}(\left[v\right]_0),x_{\left[v\right]_0}),(\out{\bt}(\left[v\right]_1),x_{\left[v\right]_1}),\dots,(\out{\bt}(\left[v\right]_{k}),x_{\left[v\right]_{k}})\right)\in \bA,
\end{align*}
the marked ancestral line of vertex $v$ in the marked tree $\tilde{\bt}$. 

\paragraph{BGW tree with a uniform vertex satisfying some property.}
Let $\mathscr{P}\subset\bA$ be some property. Consider $\widetilde{T}=(T,(X_u)_{u\in T})$ a marked BGW tree with reproduction distribution $\mu$ and mark distributions $(\pi_k)_{k\geq 0}$, and on the event $\enstq{\exists v \in T}{A(\widetilde{T},v)\in \mathscr{P}}$ where at least one vertex of $\widetilde{T}$ has the property $\mathscr{P}$ (which depends only on its ancestral line and the marks along it), we let $U$ be a uniform vertex such that this is the case.
\begin{proposition}\label{prop:spine decomposition with marks} 
	For any non-negative function $F:\widetilde{\bT}^*\rightarrow\R$, we have
	\begin{align}\label{eq:spine decomposition with marks}
		\Ec{F(\widetilde{T},U)\ind{\exists v \in T,\ A(\widetilde{T},v)\in \mathscr{P}}}=\sum_{k\geq 0}\Ec{\frac{F(\widetilde{\KT}^*_k,\mathbf{U}_k^*)\ind{A(\widetilde{\KT}^*_k,\mathbf{U}_k^*)\in \mathscr{P}}}{\#\enstq{v\in\KT^*_k}{A(\widetilde{\KT}^*_k,v)\in \mathscr{P}}}}.
	\end{align}
\end{proposition}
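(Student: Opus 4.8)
The strategy is a standard spine-decomposition argument, where one rewrites the left-hand side by summing over the possible heights $k = |U|$ of the uniformly chosen marked vertex, and then identifies, for each fixed $k$, the law of the marked tree $\widetilde{T}$ re-rooted along the ancestral line of $U$ as that of the cut Kesten tree $\widetilde{\KT}^*_k$ biased by the number of competing vertices with the property $\mathscr P$. First I would expand the expectation on the left as
\[
\Ec{F(\widetilde{T},U)\ind{\exists v\in T,\ A(\widetilde T,v)\in\mathscr P}}
=\sum_{k\ge 0}\sum_{\tilde{\bt}\in\widetilde{\bT},\, v\in\bt,\, |v|=k}
\frac{F(\tilde{\bt},v)\,\ind{A(\tilde{\bt},v)\in\mathscr P}}{\#\{w\in\bt : A(\tilde{\bt},w)\in\mathscr P\}}\,\P(\widetilde T=\tilde{\bt}),
\]
using that, conditionally on $\widetilde T=\tilde{\bt}$, the vertex $U$ is chosen uniformly among the (finitely many) vertices whose ancestral line has property $\mathscr P$. (I should note that $F$ should be taken bounded measurable, or one reduces to that case by monotone convergence, to justify interchanging sums.)

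The heart of the argument is then a change of measure on $\widetilde{\bT}^*$: for fixed $k$, I want to show that the image of $\BGW_\mu$-with-marks under "pick a vertex $v$ at height $k$ and record $(\widetilde T, v)$" — with each $(\tilde{\bt},v)$ weighted by $\P(\widetilde T=\tilde{\bt})$ and summed over $v$ at height $k$ — coincides with the law of $(\widetilde{\KT}^*_k,\mathbf U_k^*)$. Concretely, for a marked tree with a distinguished vertex $v$ at height $k$, decompose $\tilde{\bt}$ into: the marked ancestral line $A(\tilde{\bt},v)\in\bA$ of length $k+1$; the subtrees $\tau_u$ hanging off the siblings of each $[v]_j$, $1\le j\le k$ (together with their marks); and the subtree $\theta_v\bt$ rooted at $v$ itself (with its marks). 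The product form $\P(\widetilde T=\tilde{\bt})=\prod_{u\in\bt}\mu(\out{}(u))\,\pi_{\out{}(u)}(dx_u)$ factorizes exactly along this decomposition; summing over all choices of $v$ at height $k$ is the same as summing over the offspring numbers $D_1,\dots,D_k$ along the spine and the positions $K_1,\dots,K_k$ of the spine among the children, which produces precisely the size-biasing $\mu(d)\mapsto d\,\mu(d)$ at each spine vertex (the factor $d$ being the $d$ choices of $K_j$) together with an independent uniform choice of $K_j\in\{1,\dots,D_j\}$. This is exactly the construction of the Kesten tree $\KT^*$ truncated and completed at level $k$ by an independent $\BGW_\mu$ tree, i.e.\ $\widetilde{\KT}^*_k$ with spine $\mathbf U_k^*$. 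Plugging this identity into the expanded sum, the weight $1/\#\{w : A(\tilde{\bt},w)\in\mathscr P\}$ and the indicator $\ind{A(\tilde{\bt},v)\in\mathscr P}$ transport verbatim to $1/\#\{w\in\KT^*_k : A(\widetilde{\KT}^*_k,w)\in\mathscr P\}$ and $\ind{A(\widetilde{\KT}^*_k,\mathbf U_k^*)\in\mathscr P}$, yielding the right-hand side of \eqref{eq:spine decomposition with marks}.

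The main obstacle, and the step requiring the most care, is the bookkeeping in this change of measure: one must verify that summing $\P(\widetilde T=\tilde{\bt})$ over all distinguished vertices $v$ at a fixed height $k$ reproduces the size-biased offspring law along the spine with the correct combinatorial factors, and in particular that the independence structure (spine offspring i.i.d.\ $\mu^*$, uniform positions, i.i.d.\ $\BGW_\mu$ bushes, independent subtree at $v$, and marks drawn according to $\pi_{\out{}(\cdot)}$ at every vertex including the spine) is preserved. There is a subtle point that $\widetilde{\KT}^*_k$ cuts the Kesten tree at $\mathbf U^*_k$ and \emph{replaces} the subtree above it by a fresh $\BGW_\mu$ tree $\tau$: this matches the fact that in $\widetilde T$, conditionally on the ancestral line of $v$ and everything off the spine up to level $k$, the subtree $\theta_v\bt$ rooted at the distinguished vertex $v$ is just an ordinary marked $\BGW_\mu$ tree. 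Once the spine decomposition identity is established at the level of laws, the rest is a direct substitution, so there are no further analytic difficulties; one only needs the a.s.\ finiteness of $\KT^*_k$ (so all quantities are well-defined) and, for the interchange of the sum over $k$ with the expectation, nonnegativity of $F$, which is assumed.
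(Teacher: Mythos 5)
Your proposal is correct and follows essentially the same route as the paper: the core step in both is the exact identity $\Pp{(\KT^*_k,\mathbf{U}_k^*)=(\bt,u)}=\Pp{T=\bt}$ for a fixed vertex $u$ at height $k$ (the size-biasing factor $\out{}([u]_{i-1})$ cancelling against the uniform choice of $K_i$), combined with the observation that conditionally on the tree the marks have the same law in both models, and with the conditional law $\Ppsq{U=u}{\widetilde T}=\ind{A(\widetilde T,u)\in\mathscr P}/\#\{v:A(\widetilde T,v)\in\mathscr P\}$ producing the denominator. The only cosmetic difference is that the paper keeps the marks inside conditional expectations given $\{T=\bt\}$ rather than writing $\P(\widetilde T=\tilde{\bt})$ for a marked tree, which is cleaner when $E$ is non-discrete, but your argument is the same.
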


\begin{proof}
	For any $\bt$ and $u\in \bt$ of height $k$, we have 
	\begin{align}\label{eq:equality proba GW - biaised GW with vertex}
		&\Pp{(\KT^*_k,\mathbf{U}_k^*)=(\bt,u)}\notag\\
		&=\prod_{1\leq i \leq k}\Pp{D_i=\out{\bt}(\left[u\right]_{i-1})}\cdot \Ppsq{K_i=u_i}{D_i=\out{\bt}(\left[u\right]_{i-1})} \cdot \prod_{v\in \bt, v\nprec u}\mu(\out{\bt}(v))\notag\\
		&=\left(\prod_{1\leq i \leq k}\out{\bt}(\left[u\right]_{i-1})\mu(\out{\bt}(\left[u\right]_{i-1})  \frac{1}{\out{\bt}(\left[u\right]_{i-1})}\right) \cdot \prod_{v\in \bt, v\nprec u}\mu(\out{\bt}(v))\notag\\
		&=\prod_{v\in \bt}\mu(\out{\bt}(v))\notag\\
		&=\Pp{T=\bt}.
	\end{align}
	
	We now expand the expectation as a sum
	\begin{align*}
		\Ec{F(\widetilde{T},U)\ind{\exists v \in T,\ A(\widetilde{T},v)\in \mathscr{P}}}&=\sum_{\bt\in \bT, u\in \bt}\Ec{F(\widetilde{T},U)\ind{\exists v \in T,\ A(\widetilde{T},v)\in \mathscr{P}} \cdot \ind{T=\bt, U=u}}.
	\end{align*}
	For every $(\bt,u)$ with $\abs{u}=k$, we have
	\begin{align*}
		&\Ec{F(\widetilde{T},U)\ind{\exists v \in T,\ A(\widetilde{T},v)\in \mathscr{P}} \cdot \ind{T=\bt, U=u}}\\
		&= \Pp{T=\bt} \cdot \Ecsq{F(\widetilde{T},u) \cdot \ind{A(\widetilde{T},u)\in \mathscr{P}} \cdot \Ppsq{U=u}{\widetilde{T}}}{T=\bt}\\
		&= \Pp{T=\bt} \cdot \Ecsq{\frac{F(\widetilde{T},u) \cdot \ind{A(\widetilde{T},u)\in \mathscr{P}}}{\#\enstq{v\in \bt}{A(\widetilde{T},v)\in \mathscr{P}}}}{T=\bt}\\
		&=\Pp{(\KT^*_k,\mathbf{U}_k^*)=(\bt,u)}\cdot \Ecsq{\frac{F(\widetilde{\KT}^*_k,u) \cdot \ind{A(\widetilde{\KT}^*_k,u)\in \mathscr{P}}}{\#\enstq{v\in \bt}{A(\widetilde{\KT}^*_k,v)\in \mathscr{P}}}}{\KT^*_k=\bt}\\
		&=\Pp{(\KT^*_k,\mathbf{U}_k^*)=(\bt,u)}\cdot \Ecsq{\frac{F(\widetilde{\KT}^*_k,u) \cdot \ind{A(\widetilde{\KT}^*_k,u)\in \mathscr{P}}}{\#\enstq{v\in \bt}{A(\widetilde{\KT}^*_k,v)\in \mathscr{P}}}}{\KT^*_k=\bt,\mathbf{U}_k^*=u}\\
		&=\Ec{\frac{F(\widetilde{\KT}^*_k,\mathbf{U}_k^*) \cdot \ind{A(\widetilde{\KT}^*_k,\mathbf{U}_k^*)\in \mathscr{P}} \cdot \ind{(\KT^*_k,\mathbf{U}_k^*)=(\bt,u)}}{\#\enstq{v\in \bt}{A(\widetilde{\KT}^*_k,v)\in \mathscr{P}}}}.\\
	\end{align*}
	The third equality uses \eqref{eq:equality proba GW - biaised GW with vertex} and the fact that conditionally on the tree $T=\bt$ or $\KT^*_k=\bt$, the distribution of the marks on the tree is the same. 
	The fourth equality uses the fact that the quantity in the conditional expectation does not depend on $\mathbf{U}_k^*$.
	The result is then obtained by summing over all heights $k\geq 0$ and all finite trees $\bt$ with a distinguished vertex $u$ at height $k$. 
\end{proof}

\begin{remark}
	For our purposes, we will just use $E=\R_+$ but we want to emphasize that this result still holds true for other types of marks: the key point for the proof is that the law of the marks conditionally on the tree only depends on the degree of the corresponding vertex.
\end{remark}

%
\subsection{The contribution of vertices of small degree is small}\label{subsec:small blobs do not contribute}
The goal of this section is to prove Proposition~\ref{prop:small blobs don't contribute}. For that, we prove Proposition~\ref{prop:small marks don't contribute} which implies the former directly. 

Let $\alpha\in\intervalleoo{1}{2}$ and $\gamma>\alpha-1$. 
Let $\mu$ be a critical reproduction distribution in the domain of attraction of an $\alpha$-stable law. 
We let $b_n=B_\mu(n)$  be the $\frac{1}{\alpha}$-regularly varying function associated to $\mu$ by \eqref{eq:regularly varying function associated to X}. 
For all $n\geq 1$ for which the conditioning is non-degenerate, we let $T_n$ be a BGW tree with reproduction law $\mu$, conditioned to have exactly $n$ vertices.

This random tree is endowed with marks $(X_u)_{u\in T_n}$ such that conditionally on $T_n$, the marks are independent with distribution that only depends on the degree of the corresponding vertex $X_u\sim \pi_{\out{T_n}(u)}$, where the sequence $(\pi_k)_{k\geq 0}$ is a sequence of probability measures on $\R_+$.
We further require that for some real number $m> \frac{4\alpha}{ ( 2\gamma + 1 - \alpha)}$, we have
\begin{align*}
	\sup_{k\geq 0} \Ec{\left(\frac{X^{(k)}}{k^\gamma}\right)^m}<\infty,
\end{align*}
where for every $k\geq 0$, the random variable $X^{(k)}$ has distribution $\pi_k$. 
We prove the following.
\begin{proposition}\label{prop:small marks don't contribute}
In the setting decribed above, for any $\epsilon>0$ we have 
	\begin{align*}
		\Pp{\sup_{v\in T_n} \left\lbrace\sum_{u\preceq v} X_u \ind{\out{T_n}(u)\leq \delta b_n}\right\rbrace>\epsilon b_n^\gamma} \underset{\delta\rightarrow 0}{\rightarrow}0,
	\end{align*}
	uniformly in $n\geq 1$ such that $T_n$ is well-defined.
\end{proposition}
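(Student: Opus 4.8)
The strategy is to bound the supremum over all $v\in T_n$ by the sum over the full ancestral line of the leaf that actually achieves it, and to control this via the spine decomposition for marked trees (Proposition~\ref{prop:spine decomposition with marks}). First I would set the property $\mathscr P$ to be trivial (every vertex satisfies it), so that $U$ is a uniform vertex of $T_n$ and \eqref{eq:spine decomposition with marks} reduces to a clean size-biased identity: for any non-negative $F$,
\begin{align*}
	n\cdot\Ec{\frac{1}{n}\sum_{v\in T_n} F(\widetilde T_n,v)} = \sum_{k\ge 0}\Ec{F(\widetilde{\KT}^*_k,\mathbf U_k^*)\,\ind{\abs{\KT^*_k}=n}}\big/\Pp{\abs{T_n}=n}.
\end{align*}
Here one must be slightly careful: the statement is about $\BGW_\mu$ conditioned on $\abs{T_n}=n$, so the spine identity has to be applied to the unconditioned tree with the indicator $\ind{\abs T = n}$ inserted, and then divided by $\Pp{\abs T_n=n}$, which is of order $(nb_n)^{-1}$ by \eqref{eq:probability that a BGW has n vertices}. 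I would apply this with $F(\widetilde{\bt},v)=\ind{S(v)>\epsilon b_n^\gamma}$ where $S(v):=\sum_{u\preceq v} X_u\ind{\out{}(u)\le\delta b_n}$, to get
\begin{align*}
	\Pp{\exists\,\text{leaf } v\in T_n:\ S(v)>\epsilon b_n^\gamma}\;\le\; n\cdot\Pp{S(U)>\epsilon b_n^\gamma\mid \abs{T_n}=n}\;\lesssim\; n b_n\cdot \Pp{S_\delta^{\mathrm{spine}}>\epsilon b_n^\gamma,\ \abs{\KT^*}=n\text{ up to level }k},
\end{align*}
and then sum over $k$; the sup over all $v$ is at most the sup over leaves since $S$ is monotone along ancestral lines.

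The core of the argument is then a tail estimate for the quantity $S_\delta^{\mathrm{spine}}:=\sum_{i\ge 1}X_{\mathbf U_i^*}\ind{D_i\le\delta b_n}$, the sum of the marks along the spine of the (cut) Kesten tree restricted to spine vertices of small degree. The spine has length $\Theta_p(b_n)$ (the number of vertices before $\abs{\KT^*_k}$ reaches $n$), the $D_i$ are i.i.d.\ with the size-biased law $\mu^*$, and conditionally on $D_i$ the mark $X_{\mathbf U_i^*}$ has law $\pi_{D_i}$ with $\Ec{(X_{\mathbf U_i^*}/D_i^\gamma)^m}$ uniformly bounded. So each spine term contributes at most $O_p(D_i^\gamma)$ with $D_i\le\delta b_n$, hence each is $\le (\delta b_n)^\gamma\cdot(\text{unit-scale variable})$; summing $\Theta(b_n)$ of them and dividing by $b_n^\gamma$ one expects a bound like $b_n\cdot(\delta b_n)^\gamma/b_n^\gamma=\delta^\gamma b_n$, which is \emph{not} small — so a crude first-moment bound fails, and one genuinely needs the heavy-tailed structure of $\mu^*$: most $D_i$ are $O(1)$, and the truncation $D_i\le\delta b_n$ removes exactly the large ones. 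I would use Lemma~\ref{lem:application of the potter bounds}\eqref{it:probability of degree being large} together with a dyadic decomposition of the range $1\le D_i\le\delta b_n$ into blocks $D_i\in[2^{j},2^{j+1}]$, bound the number of spine indices with $D_i$ in such a block using that the spine length is $\asymp b_n$ and $\Pp{D_i\ge 2^j}\lesssim 2^{j(1-\alpha-\eta)}$, and then sum the contributions $\sum_j 2^{j\gamma}\cdot(\#\text{such indices})^{\text{(with high-moment control)}}$. The condition $\gamma>\alpha-1$ is exactly what makes $\sum_j 2^{j\gamma}\cdot 2^{j(1-\alpha)}$-type sums converge when summed only up to $j=\log_2(\delta b_n)$ with a surplus power of $\delta$, and the moment hypothesis $m>\frac{4\alpha}{2\gamma+1-\alpha}$ is what is needed to beat the factor $n$ (equivalently $nb_n$ from the spine identity) via a Markov/Rosenthal inequality at exponent $m$ on the block sums.

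Concretely, the remaining steps are: (i) reduce to leaves and apply the spine decomposition as above; (ii) on the spine, condition on the degrees $(D_i)$ and use that $\sum_i X_{\mathbf U_i^*}\ind{D_i\le\delta b_n}$ is, given $(D_i)$, a sum of independent variables of scale $D_i^\gamma$ with $m$-th moments controlled, so by Rosenthal's inequality its $m$-th moment is bounded by a constant times $\big(\sum_i D_i^{2\gamma}\ind{D_i\le\delta b_n}\big)^{m/2}+\sum_i D_i^{m\gamma}\ind{D_i\le\delta b_n}$; (iii) take expectations over $(D_i)$ using Lemma~\ref{lem:application of the potter bounds}\eqref{it:probability of degree being large} (and (ii),(iii) of that lemma to control the random spine length / conditioning on $\abs{\KT^*}=n$, replacing the ``number of vertices'' conditioning by summing the geometric-type contributions over $k$ and using $\Ec{1/\sum\abs{\tau_i}}\lesssim \lambda^{-\alpha-\eta}/n$); (iv) collect powers of $b_n$ and $\delta$, using $\gamma>\alpha-1$ and $m>\frac{4\alpha}{2\gamma+1-\alpha}$ to see that the resulting bound on $n\cdot b_n\cdot\Pp{\dots}$ is $O(\delta^{c})$ for some $c>0$, uniformly in $n$. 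Finally I would note that Proposition~\ref{prop:small blobs don't contribute} follows from Proposition~\ref{prop:small marks don't contribute} by taking $E=\R_+$, $\pi_k=\Law(\diam(\tilde B_k))$ (or rather the law of $\diam$ of the block of size $k$), since $\diam(B_w)$ depends on $w$ only through $\out{}(w)$ once the blocks are sampled independently. \textbf{Main obstacle.} The delicate point is step (iv): controlling the interplay between the factor $n$ (really $nb_n$) coming from the union bound over leaves, the $b_n^\gamma$ normalization, and the truncation level $\delta b_n$, so that everything collapses to a genuinely small quantity uniformly in $n$; getting the exponent bookkeeping right is precisely where the hypothesis $m>\frac{4\alpha}{2\gamma+1-\alpha}$ (rather than the weaker $m>\alpha/(\alpha-1)$ of \cite{arXiv:2011.07266}) is forced, and where the dyadic decomposition together with the Potter bounds must be handled with care — in particular the free parameter $\eta>0$ in Lemma~\ref{lem:application of the potter bounds} must be chosen small depending on $\gamma,\alpha,m$ at the very end.
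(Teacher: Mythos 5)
There is a genuine gap, and it sits exactly at the point where you trivialize the property $\mathscr P$ in the spine decomposition. Once you set $\mathscr P$ to be trivial, the denominator $\#\enstq{v\in\KT_k^*}{A(\widetilde\KT_k^*,v)\in\mathscr P}$ becomes $|T_n|=n$ and your identity degenerates into a plain union bound $\Pp{\exists v: S(v)>\epsilon b_n^\gamma}\le n\cdot\Pp{S(U)>\epsilon b_n^\gamma\,|\,|T_n|=n}$. But the single-spine probability cannot be made $o(1/n)$: the spine to a uniform vertex has length $\asymp n/b_n$ (not $\Theta_p(b_n)$ as you write), and with $D_i\sim\mu^*$ i.i.d.\ one computes $\Ec{D^{j\gamma}\ind{D\le\delta b_n}}\asymp(\delta b_n)^{j\gamma+1-\alpha}$, so every term in the Rosenthal bound for $\Ec{(S(U)/b_n^\gamma)^m}$ comes out as a pure power of $\delta$ with the powers of $b_n$ cancelling exactly (e.g.\ the square-function term gives $\bigl((n/b_n)(\delta b_n)^{2\gamma+1-\alpha}b_n^{-2\gamma}\bigr)^{m/2}=\delta^{(2\gamma+1-\alpha)m/2}$). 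No choice of $m$ produces decay in $n$, so $n\cdot\Pp{S(U)>\epsilon b_n^\gamma}$ diverges for fixed $\delta$ and the uniformity in $n$ is lost. Your claim that the moment hypothesis $m>\frac{4\alpha}{2\gamma+1-\alpha}$ is ``what is needed to beat the factor $n$'' is therefore not correct: that hypothesis beats a factor $\delta^{-2\alpha}$, never a factor $n$.

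The paper's proof recovers the missing $1/n$ structurally, not by moments, and this is the idea absent from your plan. One first slices the degree range dyadically at the level of the whole statement (Lemma~\ref{lem:cutting slices} treats degrees in $(\tfrac{\delta}{2}b_n,\delta b_n]$ with threshold $\epsilon b_n^\gamma$, and the proposition follows by summing over slices with thresholds $2^{-i\eta}\epsilon$). Within a slice one keeps the \emph{nontrivial} property $\tilde P(\epsilon,\delta,n)$ in Proposition~\ref{prop:spine decomposition with marks}: since badness is inherited by descendants, the first spine vertex $\mathbf U^*_{N(\epsilon,\delta,n)}$ at which the partial sum crosses the threshold necessarily has degree in $(\tfrac{\delta}{2}b_n,\delta b_n]$, so the denominator is at least the total progeny of $\asymp\delta b_n$ of its children, a sum of i.i.d.\ BGW tree sizes whose inverse has expectation $\lesssim\delta^{-\alpha-\eta}/n$ by Lemma~\ref{lem:application of the potter bounds}. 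This factor $\delta^{-\alpha-\eta}/n$, a second such factor from the conditioning $\{|T|=n\}$, the sum over spine heights $k\le\delta^{-\eta}n/b_n$, and the division by $\Pp{|T|=n}\asymp(nb_n)^{-1}$ combine to $\delta^{-2\alpha-O(\eta)}$, which is then beaten by the Petrov/Rosenthal estimate $\delta^{(\gamma+\frac{1-\alpha}{2}-O(\eta))m}$ for the spine sum — this is precisely where $m>\frac{4\alpha}{2\gamma+1-\alpha}$ enters. Your other ingredients (reduction to leaves, the local limit theorem to remove the conditioning, Rosenthal at exponent $m$, the final specialization $\pi_k=\Law(\diam(\tilde B_k))$) match the paper, but without the progeny lower bound on the denominator the argument does not close.
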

Remark that this proposition directly implies Proposition~\ref{prop:small blobs don't contribute} when applying it to marks distribution $(\pi_k)_{k\geq 0}$ that are respectively the laws of $(\diam(B_k))_{k\geq 0}$.
In order to prove this result, we first prove an intermediate lemma.
\begin{lemma}\label{lem:cutting slices}
	For any small enough $\eta>0$, $\epsilon>0$ and all $\delta\in \intervalleoo{0}{\epsilon^{\frac{1}{\eta}}}$, we have simultaneously for all $n\geq 1$, 
	\begin{align*}
		\Pp{\sup_{v\in T_n} \left\lbrace\sum_{u\preceq v} X_u \ind{\frac{\delta}{2}b_n<\out{T_n}(u)\leq \delta b_n}\right\rbrace>\epsilon b_n^\gamma} \leq \delta^{\beta},
	\end{align*}
	where $\beta=(\gamma +\frac{1-\alpha}{2}-5\eta)m-2\alpha-4\eta$, which is positive if $\eta$ is chosen small enough.
\end{lemma}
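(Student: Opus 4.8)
The plan is to control, for a fixed dyadic-type window $\delta/2 < \out{}(u) \le \delta b_n$, the maximal sum of marks $X_u$ accumulated along an ancestral line, using the spine decomposition of Proposition~\ref{prop:spine decomposition with marks} together with the moment hypothesis on the marks and the estimates of Lemma~\ref{lem:application of the potter bounds}. First I would fix a vertex $v$ realising the supremum and observe that the quantity $\sum_{u\preceq v} X_u\ind{\delta b_n/2<\out{}(u)\le \delta b_n}$ is bounded by a sum over the ancestors $u$ of $v$ whose degree lies in the prescribed window; call $S_v$ this sum. The event $\{\sup_v S_v>\epsilon b_n^\gamma\}$ is contained in the event that \emph{some} vertex $w$ with $\delta b_n/2<\out{}(w)\le\delta b_n$ has a marked ancestral line (including $w$ itself and the other such ancestors above it) contributing more than $\epsilon b_n^\gamma$. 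I would encode ``being such a vertex $w$ whose ancestral contribution exceeds $\epsilon b_n^\gamma$'' as a property $\mathscr P\subset\bA$ depending only on the ancestral line and its marks, so that Proposition~\ref{prop:spine decomposition with marks} applies with $F\equiv 1$: this bounds $\P(\exists w\in T_n:\ldots)$ (before conditioning on $|T_n|=n$) by $\sum_{k\ge 0}\P((\KT^*_k,\mathbf U_k^*)$ has the property$)$, and then conditioning on $\{|T_n|=n\}$ costs only a factor $O(b_n/n)\cdot$(uniform bound) using \eqref{eq:probability that a BGW has n vertices} and \eqref{eq:uniform bound for the probability Sn=k} from Lemma~\ref{lem:application of the potter bounds}.

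Next I would estimate, for each spine length $k$, the probability that the cut Kesten tree $\KT^*_k$ has at least one spine vertex $\mathbf U^*_i$, $i\le k$, with degree $D_i$ in the window $(\delta b_n/2,\delta b_n]$, \emph{and} that the marks $X_{\mathbf U^*_i}$ summed over the $i$ with $D_i$ in this window exceed $\epsilon b_n^\gamma$. Along the spine the degrees $D_i$ are i.i.d.\ with the size-biased law $\mu^*$, so by Lemma~\ref{lem:application of the potter bounds}(\ref{it:probability of degree being large}), $\P(D_i\in(\delta b_n/2,\delta b_n])\le \P(D_i\ge \delta b_n/2)\le C\delta^{1-\alpha-\eta}b_n/n$; the total size $|\KT^*_k|$ being of order $n$ forces $k$ to be effectively $O(b_n)$ (more precisely, the contributions of $k\gg b_n^{1+\eta'}$ are negligible thanks to (\ref{it:probability total size of Bn GW is k}) bounding $\P(|\KT^*_k|=n)$). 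Conditioned on the degrees, each mark $X_{\mathbf U^*_i}$ with $D_i\le\delta b_n$ has $m$-th moment bounded by $C D_i^{\gamma m}\le C(\delta b_n)^{\gamma m}$ by hypothesis, so a Markov/union bound at exponent $m$ on the sum of at most (number of windowed spine vertices) such marks exceeding $\epsilon b_n^\gamma$ yields a factor roughly $(\delta b_n)^{\gamma m}/(\epsilon b_n^\gamma)^m\cdot(\text{combinatorial count})$. Multiplying the ``a windowed degree occurs'' probability, the mark-tail probability, and summing over $k=O(b_n)$ and over the reweighting $b_n/n$, and finally carefully tracking the powers of $\delta$ and $b_n$ (using that $b_n$ is $1/\alpha$-regularly varying and the Potter bounds), one is left with an upper bound of the form $C\,\delta^{(\gamma+\frac{1-\alpha}{2}-5\eta)m-2\alpha-4\eta}=C\delta^\beta$; absorbing the constant $C$ into the restriction $\delta<\epsilon^{1/\eta}$ (which makes $\epsilon^{-m}\le\delta^{-\eta m}$, helping close the exponent) gives exactly the stated bound, uniformly in $n$.

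The main obstacle I expect is the bookkeeping that produces precisely the exponent $\beta=(\gamma+\frac{1-\alpha}{2}-5\eta)m-2\alpha-4\eta$: one must split the degree contribution $\delta^{1-\alpha-\eta}$ per windowed spine vertex, the number of such vertices (whose expectation along a spine of length $\asymp b_n$ is $\asymp b_n\cdot\delta^{1-\alpha}b_n/n\asymp\delta^{1-\alpha}b_n^2/n$, but whose \emph{large-deviation} count must be controlled so that the $m$-th-moment bound on the sum of marks does not blow up), the mark-tail factor $(\delta b_n)^{\gamma m}/b_n^{\gamma m}=\delta^{\gamma m}$, and the reweighting factor, and verify that the powers of $b_n$ cancel exactly while the residual power of $\delta$ is at least $\beta$. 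The half-integer shift $\frac{1-\alpha}{2}$ strongly suggests one should not bound all windowed spine vertices at once but rather split the spine into two halves (or use a Cauchy–Schwarz / second-moment step on the number of windowed vertices), which is presumably where the factor $\tfrac12$ and the constant $5\eta$ of slack come from; getting that split right, and checking $\beta>0$ for $\eta$ small given $\gamma>\alpha-1$ and $m>\frac{4\alpha}{2\gamma+1-\alpha}$, is the delicate point. Everything else — the spine decomposition, the Potter bounds, the regular variation of $b_n$ — is machinery already available in the paper.
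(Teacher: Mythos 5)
Your skeleton (spine decomposition, windowed spine degrees, an $m$-th moment bound on the marks, Potter bounds, reweighting by $\Pp{|T|=n}$) matches the paper's, but as written the argument has a gap that is fatal to the bookkeeping: you apply Proposition~\ref{prop:spine decomposition with marks} with $F\equiv 1$ and then discard the denominator $\#\enstq{v\in\KT^*_k}{A(\widetilde\KT^*_k,v)\in\mathscr P}$, bounding it by $1$. That denominator is the heart of the proof. The reweighting costs a factor $1/\Pp{|T|=n}\asymp nb_n$, the sum over spine lengths contributes a factor of order $n/b_n$ (the height), and the event $\{|\KT^*_k|=n\}$ contributes only one factor of order $\delta^{-\alpha}/n$; without a second factor of order $1/n$ coming from the denominator, you are left with a dangling factor of $n$ and no bound uniform in $n$. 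The paper's proof lower-bounds the denominator by the total progeny of the first $\lfloor\delta b_n/4\rfloor$ children of the first spine vertex realising the property (these subtrees are i.i.d.\ unconditioned BGW trees given the rest), and invokes Lemma~\ref{lem:application of the potter bounds}(iii), $\Ec{1/\sum_{i\le\delta b_n/4}|\tau_i|}\le C\delta^{-\alpha-\eta}/n$; the \emph{other} half of the children is reserved to run the local limit theorem for $\{|\KT^*_k|=n\}$ conditionally on everything else. This splitting of the offspring into two groups, and the resulting product of two independent $\delta^{-\alpha-\eta}/n$ factors, is what produces the $-2\alpha$ in $\beta$ and makes the powers of $n$ and $b_n$ cancel.

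Two further corrections. First, the spine length is truncated at the height $\delta^{-\eta}n/b_n$ using Kortchemski's sub-exponential tail bound for $\haut(T_n)$, not at $O(b_n)$; since $n/b_n\ll b_n$ for $\alpha<2$, your truncation admits far too many spine vertices (your own heuristic count $\delta^{1-\alpha}b_n^2/n\to\infty$ of windowed spine vertices signals this), whereas with the correct truncation the number of windowed spine vertices is at most $\delta^{1-\alpha-3\eta}$ with overwhelming probability. Second, the exponent $\frac{1-\alpha}{2}$ does not come from splitting the spine in two or from Cauchy--Schwarz on the count: it comes from the Petrov/Rosenthal inequality for the $m$-th moment of a centered sum of $N$ independent variables, which is $O(N^{m/2})$ with $N\le\delta^{1-\alpha-3\eta}$, combined with Markov's inequality at level $\delta^{-\gamma+2\eta}$ (here $\gamma>\alpha-1$ is used to absorb the centering term). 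These three ingredients — the denominator, the height truncation, and the Rosenthal step — are exactly the parts you flagged as uncertain, and each is essential.
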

Let us show how the result we want follows from this lemma.
\begin{proof}[Proof of Proposition~\ref{prop:small marks don't contribute}]
	Let us take $\eta>0$ small enough such that Lemma~\ref{lem:cutting slices} holds. For any $\epsilon>0$ small enough, and $0<\delta<\epsilon^{\frac{1}{\eta}}$ we have
	\begin{align*}
		\sup_{v\in T_n} \left(\sum_{u\preceq v} X_u \ind{\out{T_n}(u)\leq \delta b_n}\right) &\leq  \sum_{i=0}^{\infty} \sup_{v\in T_n} \left\lbrace\sum_{u\preceq v} X_u \ind{\delta 2^{-i-1}b_n<\out{T_n}(u)\leq \delta 2^{-i} b_n}\right\rbrace.
	\end{align*}
	Write $\frac{\epsilon}{1-2^{-\eta}}=\sum_{i=0}^{\infty} \epsilon \cdot 2^{-i\eta}$ and use a union bound and the result of Lemma~\ref{lem:cutting slices} for all pairs $(\epsilon',\delta')\in \{(2^{-i\eta}\epsilon,2^{-i}\delta),\ i\geq 0\}$ which thanks to our assumption, still satisfy $\delta'<(\epsilon')^{\frac{1}{\eta}}$. This yields
	\begin{align*}	
		&\Pp{\sup_{v\in T_n} \left(\sum_{u\preceq v} X_u \ind{\out{T_n}(u)\leq \delta b_n}\right)>\frac{\epsilon}{1-2^{-\eta}}b_n^\gamma} \\
		&\leq  \sum_{i=0}^{\infty} \Pp{\sup_{v\in T_n} \left\lbrace\sum_{u\preceq v} X_u \ind{\delta 2^{-i-1}b_n<\out{T_n}(u)\leq \delta 2^{-i} b_n}\right\rbrace>2^{-i\eta}\cdot \epsilon \cdot b_n^\gamma}\\
		&\leq  \sum_{i=0}^{\infty} (2^{-i}\delta)^{\beta} \underset{\delta\rightarrow 0}{\longrightarrow} 0
	\end{align*}
	which is what we wanted to prove.
\end{proof}
The rest of the section is then devoted to the proof of Lemma~\ref{lem:cutting slices}.
\subsubsection{Proof of Lemma~\ref{lem:cutting slices}}
Now, let us turn to the proof of Lemma~\ref{lem:cutting slices}. For this one, we are going to use  Proposition~\ref{prop:spine decomposition with marks} with a certain property $\tilde{P}(\epsilon,\delta,n)$, which is defined as the subset of $\bA$ such that for any $\tilde\bt=(\bt,(x_u)_{u\in\bt})$ and $v\in\bt$,
\begin{align*}
	A(\tilde\bt,v)\in \tilde P(\epsilon,\delta,n) \quad \iff \quad  \sum_{u\preceq v} x_u \ind{\frac{\delta}{2}b_n<\out{\bt}(u)\leq \delta b_n}>\epsilon b_n^\gamma.
\end{align*}
Let $T_n\sim \BGW_\mu^n$. For a small $\eta>0$ we can write using a union bound
\begin{align}\label{eq:exponential decay of height of conditioned BGW}
	&\Pp{\sup_{v\in T_n} \left\lbrace\sum_{u\preceq v} X_u \ind{\frac{\delta}{2}b_n<\out{T_n}(u)\leq \delta b_n}\right\rbrace>\epsilon b_n^\gamma}\notag \\
	&\leq \Pp{\sup_{v\in T_n} \left\lbrace\sum_{u\preceq v} X_u \ind{\frac{\delta}{2}b_n<\out{T_n}(u)\leq \delta b_n}\right\rbrace>\epsilon b_n^\gamma \quad \text{and} \quad \haut(T_n)\leq \delta^{-\eta}\frac{n}{b_n}}+\Pp{\haut(T_n)>\delta^{-\eta}\frac{n}{b_n}}.
\end{align}
Thanks to \cite[Theorem~2]{kortchemski_sub_2017}, the second term is already smaller than some $C_1\exp(-C_2\delta^{-\eta})$, uniformly in $n$. 
We then just have to take care of the first term, for which we write
\begin{align}\label{eq:conditioned BGW to unconditioned}
	&\Pp{\sup_{v\in T_n} \left\lbrace\sum_{u\preceq v} X_u \ind{\frac{\delta}{2}b_n<\out{T_n}(u)\leq \delta b_n}\right\rbrace>\epsilon b_n^\gamma \quad \text{and} \quad \haut(T_n)\leq \delta^{-\eta}\frac{n}{b_n}}\notag\\
	&=\frac{1}{\Pp{\abs{T}=n}}\cdot \Pp{\abs{T}=n \quad \text{and} \quad \exists v\in T, \quad  \sum_{u\preceq v} X_u \ind{\frac{\delta}{2}b_n<\out{T_n}(u)\leq \delta b_n}>\epsilon b_n^\gamma \quad \text{and} \quad \haut(T)\leq \delta^{-\eta}\frac{n}{b_n}},
\end{align}
where here $T$ is an unconditioned BGW tree.
We already know from \eqref{eq:probability that a BGW has n vertices} that 
\begin{align*}
	\Pp{\abs{T}=n}\underset{n \rightarrow\infty}{\sim} \frac{d_\alpha(0)}{nb_n}.
\end{align*}
Using the spine decomposition Proposition~\ref{prop:spine decomposition with marks}, we can write the second factor as
\begin{align}\label{eq:spine decomposition applied to delta slice}
	&\Pp{\abs{T}=n, \text{ and } \exists v\in T,\ \sum_{u\preceq v} X_u \ind{\frac{\delta}{2}b_n<\out{T_n}(u)\leq \delta b_n}> \epsilon b_n^\gamma \text{  and  }\haut(v)\leq \delta^{-\eta}\frac{n}{b_n} } \notag \\
	&=\Ec{\sum_{k=0}^{\delta^{-\eta}\frac{n}{b_n}} \frac{\ind{\abs{\KT_k^*}=n}\cdot \ind{A(\widetilde\KT_k^*,\mathbf{U}_k^*)\in \tilde{P}(\epsilon,\delta,n)}}{\#\enstq{v\in \KT_k^*}{A(\widetilde\KT_k^*,v)\in \tilde{P}(\epsilon,\delta,n)}}}. 
\end{align}
Then we are going to upper-bound the last display by using the fact that from the definition of $\tilde{P}(\epsilon,\delta,n)$, for any $v\in \KT_k^*$ such that $A(\widetilde\KT_k^*,v)\in \tilde{P}(\epsilon,\delta,n)$, any descendant $u$ of $v$ satisfies $A(\widetilde\KT_k^*,u)\in \tilde{P}(\epsilon,\delta,n)$. We first introduce
\begin{align*}
	N(\epsilon,\delta,n):=\inf \enstq{i\geq 0}{\sum_{u\preceq \mathbf{U}_i^*} X_u \ind{\frac{\delta}{2}b_n<\out{\KT_k^*}(u)\leq \delta b_n}> \epsilon b_n^\gamma},
\end{align*}
which is the height of the first vertex on the spine that satisfies $\tilde{P}(\epsilon,\delta,n)$. 
This, in particular, entails that $\frac{\delta}{2}b_n \leq \out{\KT_k^*}(\mathbf{U}_{N(\epsilon,\delta,n)}^*) \leq \delta b_n$. 
For technical reasons that will be explained later, let us split the offspring of $u=\mathbf{U}_{N(\epsilon,\delta,n)}^*$ into two subsets whose sizes are of the same order: those written as $ui$ for $i\leq \frac{\delta}{4} b_n$ and those written as $ui$ for $i> \frac{\delta}{4} b_n$. 
We only consider the progeny of the former for now and use that to lower-bound the total number $\#\enstq{v\in \KT_k^*}{A(\widetilde\KT_k^*,v)\in \tilde{P}(\epsilon,\delta,n)}$. We then get
\begin{multline}\label{eq:upper bound using only a quarter of the progeny of the tip}
	\Ec{\sum_{k=0}^{\delta^{-\eta}\frac{n}{b_n}} \frac{\ind{\abs{\KT_k^*}=n}\cdot \ind{A(\widetilde\KT_k^*,\mathbf{U}_k^*)\in \tilde{P}(\epsilon,\delta,n)}}{\#\enstq{v\in \KT_k^*}{A(\widetilde\KT_k^*,v)\in \tilde{P}(\epsilon,\delta,n)}}}\\
	\leq \sum_{k=0}^{\delta^{-\eta}\frac{n}{b_n}} \Ec{\frac{\ind{\abs{\KT_k^*}=n}\cdot \ind{A(\KT_k^*,\mathbf{U}_k^*)\in \tilde{P}(\epsilon,\delta,n)}}{\#\enstq{v\in \KT_k^*}{v\succeq \mathbf{U}_{N(\epsilon,\delta,n)}^*i\text{ for some } i\leq \frac{\delta}{4}b_n}}}.
\end{multline}

Now let us state a lemma and conclude from there. We prove the lemma at the end of the section.
\begin{lemma}\label{lem:bounding every term in the sum of delta slices}
	For all $n$ and $k\leq \delta^{-\eta}\frac{n}{b_n}$, and $\delta<\epsilon^\frac{1}{\eta}$ small enough, there exists a constant $C$ such that
	\begin{enumerate}[(i)]
		\item\label{it:proba that size is n conditionally on rest of tree}
		\begin{align*}
			\Ppsq{\abs{\KT_k^*}=n}{\frac{\ind{A(\KT_k^*,\mathbf{U}_k^*)\in \tilde{P}(\epsilon,\delta,n)}}{\#\enstq{v\in \KT_k^*}{v\succeq \mathbf{U}_{N(\epsilon,\delta,n)}^*i\text{ for some } i\leq \frac{\delta}{4}b_n}}} \leq \frac{C\cdot \delta^{-\alpha-\eta}}{n},
		\end{align*}
		\item\label{it:proba that Uk satisfies property}
		\begin{align*}
			\Pp{A(\widetilde{\KT}_k^*,\mathbf{U}_k^*)\in \tilde{P}(\epsilon,\delta,n)} \leq \delta^{(\gamma +\frac{1-\alpha}{2}-5\eta)m},
		\end{align*}
		\item \label{it:expectation of the inverse of number of vertices above UN}
		\begin{align*}
			\Ecsq{\frac{1}{\#\enstq{v\in \KT_k^*}{v\succeq \mathbf{U}_{N(\epsilon,\delta,n)}^*i\text{ for } i\leq \frac{\delta}{4}b_n}}}{A(\KT_k^*,\mathbf{U}_k^*)\in \tilde{P}(\epsilon,\delta ,n)}\leq\frac{C \delta^{-\alpha-\eta}}{n}.
		\end{align*}
	\end{enumerate}
\end{lemma}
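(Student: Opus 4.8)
The three estimates all exploit the spine decomposition of the Kesten tree $\KT_k^*$ together with the tail estimates collected in Lemma~\ref{lem:application of the potter bounds}. I would first fix notation: write $u^\star=\mathbf{U}_{N(\epsilon,\delta,n)}^*$ for the first spine vertex whose ancestral line has property $\tilde P(\epsilon,\delta,n)$; on this event one has $\tfrac{\delta}{2}b_n\le \out{}(u^\star)\le \delta b_n$, and in particular the number of children $ui$ with $i\le \tfrac{\delta}{4}b_n$ is itself of order $\delta b_n$ (this is exactly why the offspring of $u^\star$ was split in two: we keep the ``first half'' of its children, which is disjoint from the spine child $K_{N+1}$ whenever the spine continues, so the subtrees hanging from these children are genuine independent $\BGW_\mu$ trees, unconditioned). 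Then $\#\{v\in\KT_k^* : v\succeq u^\star i \text{ for some } i\le\tfrac{\delta}{4}b_n\}$ is bounded below by $\sum_{i=1}^{\lfloor \delta b_n/4\rfloor}|\tau_{u^\star i}|$, a sum of i.i.d.\ copies of $|\tau|$ where $\tau\sim\BGW_\mu$. This is the key structural observation; once it is in place all three parts are applications of Lemma~\ref{lem:application of the potter bounds}.

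For \eqref{it:expectation of the inverse of number of vertices above UN}: conditionally on the event $\{A(\KT_k^*,\mathbf{U}_k^*)\in\tilde P(\epsilon,\delta,n)\}$ (which, given $u^\star$, makes the subtrees $\tau_{u^\star i}$ independent of the conditioning since they hang off a vertex strictly below the relevant spine structure), the reciprocal of the count is at most $\big(\sum_{i=1}^{\lfloor\delta b_n/4\rfloor}|\tau_i|\big)^{-1}$, so by Lemma~\ref{lem:application of the potter bounds}\eqref{it:expectation inverse total size of GW} applied with $\lambda=\tfrac{\delta}{4}\cdot\frac{b_n}{B_n}\asymp \delta$ (recall $b_n=B_\mu(n)=B_n$), this is $\le C\delta^{-\alpha-\eta}/n$ after absorbing the constant $\tfrac14$ into $C$ and enlarging $\eta$ slightly. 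For \eqref{it:proba that size is n conditionally on rest of tree}: the total size of $\KT_k^*$ is $n$ iff $1+\#(\text{rest})+\sum_i|\tau_{u^\star i}|+(\text{other independent bushes})=n$; conditioning on everything except the $\tau_{u^\star i}$ for $i\le\tfrac{\delta}{4}b_n$, the probability that $\sum_{i=1}^{\lfloor\delta b_n/4\rfloor}|\tau_i|$ equals the required (deterministic, given the conditioning) value is at most $\sup_{m}\Pp{\sum_{i=1}^{\lfloor\delta b_n/4\rfloor}|\tau_i|=m}\le C\delta^{-\alpha-\eta}/n$ by Lemma~\ref{lem:application of the potter bounds}\eqref{it:probability total size of Bn GW is k} with $\lambda\asymp\delta$; here the extra weight $\frac{\ind{\cdots}}{\#\{\cdots\}}\le 1$ only helps, so one may drop it. Taking conditional expectation over the remaining randomness gives the bound.

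For \eqref{it:proba that Uk satisfies property}, which I expect to be the \emph{main obstacle}, one has to estimate the probability that the ancestral line of the spine vertex $\mathbf{U}_k^*$ accumulates more than $\epsilon b_n^\gamma$ of mark-mass coming only from spine vertices with out-degree in the dyadic window $(\tfrac{\delta}{2}b_n,\delta b_n]$. Along the spine the out-degrees $D_i$ are i.i.d.\ with the size-biased law $\mu^*$, so by Lemma~\ref{lem:application of the potter bounds}\eqref{it:probability of degree being large} each $D_i$ lies in that window with probability $\le C\delta^{1-\alpha-\eta}b_n/n$; since $k\le \delta^{-\eta}n/b_n$, the expected number $J$ of such spine vertices is $\le C\delta^{1-\alpha-2\eta}$, and $J$ is stochastically dominated by a binomial which is in turn dominated (for the purpose of upper tail bounds) by a Poisson of that mean. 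On the event that there are $j$ such vertices, the accumulated mark-mass is a sum of $j$ independent marks each drawn from $\pi_{d}$ with $d\asymp\delta b_n$; by the uniform moment hypothesis $\sup_k\Ec{(X^{(k)}/k^\gamma)^m}<\infty$ each such mark has $m$-th moment $O((\delta b_n)^{\gamma m})$, and since $b_n^\gamma\asymp B_n^\gamma$ and $n\asymp B_n^\alpha$ up to slowly varying factors absorbed in $\eta$, a Markov inequality at order $m$ on the event $\{J=j\}$ gives a bound like $j^{m}(\delta^{\gamma}\cdot \delta)^{m}\epsilon^{-m}\asymp (\delta^{\gamma+1})^m\epsilon^{-m} j^m$ — wait, more carefully one wants to compare $j\cdot(\delta b_n)^\gamma$ to $\epsilon b_n^\gamma$, i.e.\ one needs $j\gtrsim \epsilon\delta^{-\gamma}$. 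Combining: $\Pp{A\in\tilde P}\le \Pp{J\gtrsim \epsilon\delta^{-\gamma}} + \sum_{j\lesssim \epsilon\delta^{-\gamma}}\Pp{J=j}\,\Pp{\text{$j$ marks of size }\asymp(\delta b_n)^\gamma\text{ sum to}>\epsilon b_n^\gamma}$, and each term is estimated by a Markov/Chernoff bound at order $m$. The Poisson upper tail for $J$ gives (with mean $\asymp\delta^{1-\alpha-2\eta}$) a bound decaying like $\delta^{(\gamma+\alpha-1+2\eta)\cdot(\text{something})}$, while the moment-$m$ bound on the sum of $j$ marks contributes the dominant power; the exponent $(\gamma+\tfrac{1-\alpha}{2}-5\eta)m$ in the statement suggests the right split is to take roughly $\min(\delta^{1/2},1/b_n)\le$ the per-mark rescaled size, i.e.\ compare against $(\delta^{1/2}b_n)^\gamma$ rather than $(\delta b_n)^\gamma$, the loss of $\delta^{\gamma/2}$ per mark being compensated by the gain in the count; bookkeeping this optimization to land exactly on the exponent $(\gamma+\tfrac{1-\alpha}{2}-5\eta)m$ is the delicate part, and I would carry it out by writing the accumulated mass as a sum over dyadic sub-windows of $(\tfrac{\delta}{2}b_n,\delta b_n]$ is unnecessary (it is already one window), so instead the optimization is purely between the Poisson count tail and the order-$m$ moment bound — choosing the Markov threshold to balance $\E[J]^{?}$ against the $m$-th moment of each mark. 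Once that balance is struck, summing the resulting geometric-type series in $j$ yields the clean bound $\delta^{(\gamma+\frac{1-\alpha}{2}-5\eta)m}$, and feeding the three parts back into \eqref{eq:upper bound using only a quarter of the progeny of the tip} (a sum over $k\le\delta^{-\eta}n/b_n$, each term of size $\lesssim (\delta^{-\alpha-\eta}/n)^2\cdot\delta^{(\gamma+\frac{1-\alpha}{2}-5\eta)m}\cdot nb_n$ after also using $\Pp{|T|=n}^{-1}\asymp nb_n$) produces $\delta^{(\gamma+\frac{1-\alpha}{2}-5\eta)m-2\alpha-4\eta}=\delta^\beta$, which is Lemma~\ref{lem:cutting slices}.
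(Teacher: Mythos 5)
There are two genuine gaps, one in each of parts (\ref{it:proba that size is n conditionally on rest of tree}) and (\ref{it:proba that Uk satisfies property}); your part (\ref{it:expectation of the inverse of number of vertices above UN}) is correct and matches the paper.

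In part (\ref{it:proba that size is n conditionally on rest of tree}) you run the local limit estimate on the subtrees hanging from the children $u^\star i$ with $i\le\frac{\delta}{4}b_n$ — but these are exactly the subtrees whose total progeny is the denominator $\#\enstq{v}{v\succeq \mathbf{U}_{N}^* i,\ i\le\frac{\delta}{4}b_n}$ of the random variable you are conditioning on. The statement of (i) is a conditional probability \emph{given} that weight, so the conditioning $\sigma$-field must contain it; you cannot condition on ``everything except the $\tau_{u^\star i}$, $i\le\frac{\delta}{4}b_n$'' and then ``drop'' the weight. This is precisely why the offspring of $u^\star$ was split in two: the paper conditions on the tree cut above the children with index $>\frac{\delta}{4}b_n$ (excluding the spine child), a $\sigma$-field which \emph{does} determine the weight, and uses the $J\ge\frac{\delta}{4}b_n-1$ i.i.d.\ $\BGW_\mu$ subtrees above those second-half children — independent of the weight — for the local limit bound of Lemma~\ref{lem:application of the potter bounds}(\ref{it:probability total size of Bn GW is k}). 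Using the first-half subtrees for both (i) and (iii) double-counts the same randomness. If one tries to salvage your version by computing $\Ec{\ind{S=s_0}/S}=\Pp{S=s_0}/s_0$ with $S$ the total size of the first-half subtrees, one obtains a single factor of order $\delta^{-\alpha-1}/(n b_n)$ instead of the product of two independent factors of order $\delta^{-\alpha-\eta}/n$, and the final sum over $k\le\delta^{-\eta}n/b_n$ then diverges after dividing by $\Pp{\abs{T}=n}\asymp 1/(nb_n)$.

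In part (\ref{it:proba that Uk satisfies property}) you assemble the right ingredients (the number $J$ of spine vertices with degree in the window, controlled via Lemma~\ref{lem:application of the potter bounds}(\ref{it:probability of degree being large}) and a binomial tail; an order-$m$ Markov bound on the accumulated marks), but you do not derive the exponent, and the ``optimization'' you gesture at would not produce it. The factor $\frac{1-\alpha}{2}$ does not come from comparing against $(\delta^{1/2}b_n)^\gamma$ nor from balancing a Poisson tail against a crude moment bound: it comes from \emph{centering} the marks and applying a Rosenthal--Petrov type inequality, which bounds the $m$-th moment of a centered sum of $J$ variables with uniformly bounded $m$-th moments by $CJ^{m/2}$ rather than $CJ^{m}$. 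Concretely, on the event $J\le\delta^{1-\alpha-3\eta}$, after normalizing by $(\delta b_n)^\gamma$ and subtracting the means (whose total is $O(\delta^{1-\alpha-3\eta})$, negligible against the threshold $\epsilon\delta^{-\gamma}\ge\delta^{-\gamma+\eta}$ precisely because $\gamma>\alpha-1$), Markov at order $m$ gives $\delta^{(1-\alpha-3\eta)m/2}/\delta^{(-\gamma+2\eta)m}=\delta^{(\gamma+\frac{1-\alpha}{2}-O(\eta))m}$. The crude $J^{m}$ bound you write down gives exponent $(\gamma+1-\alpha)m$ instead, which (since $2-2\alpha<1-\alpha$) would force the strictly stronger moment hypothesis $m>4\alpha/(2\gamma+2-2\alpha)$ in place of the stated $m>4\alpha/(2\gamma+1-\alpha)$.
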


Using the above lemma to take care of every term in the sum appearing on the right-hand-side of \eqref{eq:upper bound using only a quarter of the progeny of the tip} and sum over $k$ to get the following
\begin{align*}
	\sum_{k=0}^{\delta^{-\eta}\frac{n}{b_n}}\Ec{ \frac{\ind{\abs{\KT_k^*}=n}\cdot \ind{A(\KT_k^*,\mathbf{U}_k^*)\in \tilde{P}(\epsilon,\delta ,n)}}{\#\enstq{v\in \KT_k^*}{v\succeq \mathbf{U}_{N(\epsilon,\delta,n)}^*}}}&\leq  \delta^{-\eta}\frac{n}{b_n} \cdot C \frac{\delta^{-\alpha-\eta}}{n} \cdot C \frac{\delta^{-\alpha-\eta}}{n}\cdot \delta^{(\gamma +\frac{1-\alpha}{2}-5\eta)m}\\
	&\leq C \cdot \frac{\delta^{(\gamma +\frac{1-\alpha}{2}-5\eta)m-2\alpha-3\eta}}{n b_n}.
\end{align*}
Plugging the last display into \eqref{eq:conditioned BGW to unconditioned} using the equality \eqref{eq:spine decomposition applied to delta slice} and using
 \eqref{eq:exponential decay of height of conditioned BGW} and \eqref{eq:probability that a BGW has n vertices}, we then get that for small enough $\delta$, for $\delta<\epsilon^{\frac{1}{\eta}}$ we have
\begin{align*}
	\Pp{\sup_{v\in T_n} \left\lbrace\sum_{u\preceq v} X_u \ind{\frac{\delta}{2}b_n<\out{T_n}(u)\leq \delta b_n}\right\rbrace>\epsilon b_n^\gamma} \leq \delta^{(\gamma +\frac{1-\alpha}{2}-5\eta)m-2\alpha-4\eta},
\end{align*}
which is what we wanted to prove.

\subsubsection{Proof of Lemma~\ref{lem:bounding every term in the sum of delta slices}}

Let us successively prove the three points of the lemma.
First, let us remark that if $\delta\leq \frac{8}{b_n}$, then (\ref{it:proba that size is n conditionally on rest of tree}) and (\ref{it:expectation of the inverse of number of vertices above UN}) are trivial.
Indeed, in that case the quantities on the left-hand-side are smaller than $1$.
On the other side, we can prove using the Potter bounds, that if we choose the constant large enough, the right-hand-side of that inequality is always greater than $1$ for $\delta$ in that range. 
Hence when proving (\ref{it:proba that size is n conditionally on rest of tree}) and (\ref{it:expectation of the inverse of number of vertices above UN}), we can always assume that $\delta\geq \frac{8}{b_n}$ so that $\frac{\delta}{4}b_n\geq 2$.

\subparagraph{Proof of (\ref{it:proba that size is n conditionally on rest of tree}).}
Let us work on the event $\left\lbrace A(\KT_k^*,\mathbf{U}_k^*)\in \tilde{P}(\epsilon,\delta ,n) \right \rbrace$. 
We denote by $v_1,\dots v_J$ the vertices of the form $\mathbf{U}_{N(\epsilon,\delta,n)}^*i$ for $i> \frac{\delta}{4}b_n$ that are not $\mathbf{U}_{N(\epsilon,\delta,n)+1}$. There is some number $J$ of them where $1\leq \frac{\delta}{4} b_n-1 \leq J \leq \delta b_n$.
For this proof, we define $\Cut{\KT_k^*}{v_1,v_2,\dots v_J}$ similarly as in \eqref{eq:definition Cut} except this time we remove all the vertices that are strictly above all every one of the vertices $v_1,v_2,\dots v_J$.

Now, note that the knowledge of the tree $\Cut{\KT_k^*}{v_1,v_2,\dots v_J}$ is enough to compute the quantity $\#\enstq{v\in \KT_k^*}{v\succeq \mathbf{U}_{N(\epsilon,\delta,n)}^*i \text{ for } i \leq \frac{\delta}{4}b_n}$, and that conditionally on $\Cut{\KT_k^*}{v_1,v_2,\dots v_J}$, the subtrees $\tau_1,\dots ,\tau_J$ respectively above $v_1,\dots v_J$ are i.i.d.\ $\BGW_\mu$-distributed random trees. 
Then the total size of the whole tree $\KT_k^*$ is exactly $n$ if and only if the total volume of those trees $\tau_1,\dots ,\tau_J$ is exactly $n$ minus the number of vertices in the rest of $\KT_k^*$. 
This (conditional) probability is bounded above as follows
\begin{align*}
&\Ppsq{\abs{\KT_k^*}=n}{\frac{\ind{A(\KT_k^*,\mathbf{U}_k^*)\in \tilde{P}(\epsilon,\delta,n)}}{\#\enstq{v\in \KT_k^*}{v\succeq \mathbf{U}_{N(\epsilon,\delta,n)}^*i\text{ for some } i\leq \frac{\delta}{4}b_n}}}\\
	&= \Ecsq{\Ppsq{\abs{\KT^*_k}=n}{\Cut{\KT_k^*}{v_1,v_2,\dots v_J}}}{\frac{\ind{A(\KT_k^*,\mathbf{U}_k^*)\in \tilde{P}(\epsilon,\delta,n)}}{\#\enstq{v\in \KT_k^*}{v\succeq \mathbf{U}_{N(\epsilon,\delta,n)}^*i\text{ for some } i\leq \frac{\delta}{4}b_n}}}\\
	&= \Ecsq{\Ppsq{\sum_{j=1}^J \abs{\tau_j}=n+J-\abs{\KT^*_k}}{\Cut{\KT_k^*}{v_1,v_2,\dots v_J}}}{\frac{\ind{A(\KT_k^*,\mathbf{U}_k^*)\in \tilde{P}(\epsilon,\delta,n)}}{\#\enstq{v\in \KT_k^*}{v\succeq \mathbf{U}_{N(\epsilon,\delta,n)}^*i\text{ for some } i\leq \frac{\delta}{4}b_n}}}\\
	&\leq \frac{C\cdot \delta^{-\alpha-\eta}}{n},
\end{align*}
using Lemma~\ref{lem:application of the potter bounds}(\ref{it:probability total size of Bn GW is k}) and the fact that by our assumptions we have $J\geq \frac{\delta}{4}b_n-1\geq 1$.

\subparagraph{Proof of (\ref{it:proba that Uk satisfies property}).}
For any $k\geq 1$, let us write $(X_i)_{0\leq i\leq k}=(X_{\mathbf{U}_i^*})$ to simplify the notation. We can write, using that $\epsilon\geq \delta^{\eta}$,
\begin{align}\label{eq:proba that Uk has the property is split in two term}
	&\Pp{A(\widetilde{\KT}_k^*,\mathbf{U}_k^*)\in \tilde{P}(\epsilon,\delta,n)}\notag\\
	&\leq \Pp{\sum_{i=1}^{k}X_i \ind{\frac{\delta}{2}b_n< D_i\leq \delta b_n}>\epsilon b_n^\gamma}\notag\\
	&\leq \Ppsq{\sum_{i=1}^{k}\frac{X_i}{(\delta b_n)^\gamma}\ind{\frac{\delta }{2}b_n< D_i\leq \delta  b_n}>\delta^{-\gamma+\eta}}{\sum_{i\leq k}\ind{\frac{\delta }{2}b_n< D_i\leq \delta  b_n}\leq \delta^{1-\alpha -3\eta}}+\Pp{\sum_{i\leq k}\ind{\frac{\delta }{2}b_n< D_i\leq \delta  b_n}> \delta^{1-\alpha -3\eta}}.
\end{align}
The second term of the last display is always smaller than what we would get if we take the maximum value for $k$, i.e. $\delta^{-\eta}\frac{n}{b_n}$. Using Lemma~\ref{lem:application of the potter bounds}(\ref{it:probability of degree being large}) we have  for all $n\geq 1$,
\[\Pp{\frac{\delta}{2}b_n\leq D_i \leq \delta  b_n)}\leq C\cdot\delta ^{1-\alpha-\eta}\cdot \frac{b_n}{n}.\]
Hence
\begin{align*}
	\Pp{\sum_{i\leq k}\ind{\frac{\delta }{2}b_n< D_i\leq \delta  b_n}> \delta^{1-\alpha -3\eta}}
	&\leq \Pp{\mathrm{Bin}\left(\delta^{-\eta} \frac{n}{b_n},1\wedge (C\cdot \delta^{1-\alpha-\eta}\cdot \frac{b_n}{n}) \right)>\delta^{1-\alpha -3\eta}},
\end{align*}
which decays exponentially in a negative power of $\delta$. 
For the first term of \eqref{eq:proba that Uk has the property is split in two term}, we use the fact that conditionally on the event $\{\sum_{i\leq k}\ind{\frac{\delta }{2}b_n< D_i\leq \delta  b_n}\leq \delta^{1-\alpha -3\eta}\}$, all the random variables $\frac{X_i}{(\delta b_n)^\gamma}$ are independent with $m$-th moment bounded above uniformly by the same constant $C$,
\begin{align*}
	\sup_{0\leq i\leq k} \Ec{\left(\frac{X_i}{(\delta b_n)^\gamma}\right)^m}<C.
\end{align*}
Note that we have (for another constant $C$)
\begin{align}\label{eq:upper-bound sum expectation Xi}
	\sum_{i=1}^{k}\frac{\Ec{X_i}}{(\delta b_n)^\gamma}\ind{\frac{\delta }{2}b_n< D_i\leq \delta  b_n}\leq C \cdot \sum_{i\leq k}\ind{\frac{\delta }{2}b_n< D_i\leq \delta  b_n}.
\end{align}
We use Markov's inequality and a result of Petrov \cite[Chapter~III. Result 5.16]{MR0388499}, which applies thanks to the fact that $m\geq 2$.
In the third line, we will use that $\gamma>\alpha-1$ so that for $\delta$ small enough we have $\delta^{-\gamma+\eta}-C\cdot \delta^{1-\alpha -3\eta}\geq \delta^{-\gamma+2\eta}$. 
\begin{align*}
	&\Ppsq{\sum_{i=1}^{k}\frac{X_i}{(\delta b_n)^\gamma}\ind{\frac{\delta }{2}b_n< D_i\leq \delta  b_n}>\delta^{-\gamma+\eta}}{\sum_{i\leq k}\ind{\frac{\delta }{2}b_n< D_i\leq \delta  b_n}\leq \delta^{1-\alpha -3\eta}}\\
	&\underset{\eqref{eq:upper-bound sum expectation Xi}}{\leq} \Ppsq{\sum_{i=1}^{k}\frac{X_i-\Ec{X_i}}{(\delta b_n)^\gamma}\ind{\frac{\delta }{2}b_n< D_i\leq \delta  b_n}>\delta^{-\gamma+\eta}-C\delta^{1-\alpha -3\eta}}{\sum_{i\leq k}\ind{\frac{\delta }{2}b_n< D_i\leq \delta  b_n}\leq \delta^{1-\alpha -3\eta}}\\
	&\underset{\text{Markov}}{\leq} \frac{\Ecsq{\left(\sum_{i=1}^{k}\frac{X_i-\Ec{X_i}}{(\delta b_n)^\gamma}\ind{\frac{\delta }{2}b_n< D_i\leq \delta  b_n}\right)^m}{\sum_{i\leq k}\ind{\frac{\delta }{2}b_n< D_i\leq \delta  b_n}\leq \delta^{1-\alpha -3\eta}}}{\left(\delta^{-\gamma+2\eta}\right)^m}\\
	&\underset{\text{Petrov }}{\leq} C\cdot \frac{(\delta^{1-\alpha -3\eta})^\frac{m}{2} }{\left(\delta^{-\gamma+2\eta}\right)^m}\\
	&\leq \delta^{(\gamma +\frac{1-\alpha}{2}-4\eta)m}.
\end{align*}
Taking the sum of the two terms in \eqref{eq:proba that Uk has the property is split in two term} ensures that for $\epsilon$ small enough and $\delta<\epsilon^{1/\eta}$,
\begin{align*}
	\Pp{A(\widetilde{\KT}_k^*,\mathbf{U}_k^*)\in \tilde{P}(\epsilon,\delta,n)} \leq \delta^{(\gamma +\frac{1-\alpha}{2}-5\eta)m}.
\end{align*}

\subparagraph{Proof of (\ref{it:expectation of the inverse of number of vertices above UN}).}
Now let us reason conditionally on the event $\{A(\widetilde{\KT}_k^*,\mathbf{U}_k^*)\in \tilde{P}(\epsilon,\delta,n)\}$. On that event the quantity $\#\enstq{v\in \KT_k^*}{v\succeq \mathbf{U}_{N(\epsilon,\delta,n)}^*i \text{ for } i\leq \frac{\delta}{4}b_n}$ is at least the sum of the total size of $\frac{\delta}{4}b_n-1$ independent $\BGW$ tree branching off of the spine.
Hence 
\begin{align*}
	&\Ecsq{\frac{1}{\#\enstq{v\in \KT_k^*}{v\succeq \mathbf{U}_{N(\epsilon,\delta,n)}^*i \text{ for } i\leq \frac{\delta}{4}b_n}}}{A(\KT_k^*,\mathbf{U}_k^*)\in \tilde{P}(\epsilon,\delta ,n)}\\
	&\leq \Ec{\frac{1}{\sum_{i=1}^{\frac{\delta}{4}b_n-1}\abs{\tau_i}}} \leq\frac{C \delta^{-\alpha-\eta}}{n},
\end{align*}
where the $\tau_i$'s are i.i.d.\ under distribution $\BGW_\mu$. The last inequality is obtained using Lemma~\ref{lem:application of the potter bounds}(\ref{it:expectation inverse total size of GW}).

\subsection{Proof of Lemma~\ref{lem:moment measure discrete block implies B2 or B3}} \label{subsec:proof of lemma about measure}

Before going into the proof of Lemma~\ref{lem:moment measure discrete block implies B2 or B3}, let us recall some general arguments concerning the \L ukasiewicz path of BGW trees conditional on their total size. 
For $T_n$ a BGW tree conditioned on having total size $n$, the law of the vector $(\out{T_n}(v_1)-1,\dots,\out{T_n}(v_n)-1; \nu_{v_1}(B_{v_1}), \dots ,\nu_{v_n}(B_{v_n}))$ can be described as
\begin{align*}
	\mathrm{Law}\left((X_1,\dots, X_n; Z_1, \dots ,Z_n) \ | \ \sum_{i=1}^nX_i=-1, \ \forall k\leq n-1, \sum_{i=1}^k X_i\geq 0\right),
\end{align*}
where the $(X_i,Z_i)_{1\leq i \leq n}$ are i.i.d. with the distribution of $(D-1, \nu_D(B_D))$ where $D$ follows the reproduction distribution. 

Now using the so-called Vervaat transform, the above law can also be expressed as
\begin{align*}
\mathrm{Law}\left((X_{U+1},\dots, X_{U+n}; Z_{U+1}, \dots ,Z_{U+n}) \ \Big| \ \sum_{i=1}^nX_i=-1\right),
\end{align*}
where $U$ is defined as $U:=\min\enstq{1\leq k \leq n}{\sum_{i=1}^k X_i = \min_{1\leq k \leq n} \sum_{i=1}^k X_i}$, and the indices in the last display are interpreted modulo $n$. 

In what follows, we will also use an argument of absolute continuity. For any bounded function $F$ we can write
\begin{align*}
	&\Ecsq{F((X_1,\dots, X_{\lfloor \frac{3n}{4}\rfloor }; Z_1, \dots ,Z_{\lfloor \frac{3n}{4}\rfloor }))}{\sum_{i=1}^n X_i=-1}\\
	&= \Ec{F((X_1,\dots, X_{\lfloor \frac{3n}{4}\rfloor }; Z_1, \dots ,Z_{\lfloor \frac{3n}{4}\rfloor })) \frac{\Ppsq{\sum_{i=\lfloor \frac{3n}{4}\rfloor+1}^n X_i=-1 -\sum_{i=1}^{\lfloor \frac{3n}{4}\rfloor} X_i }{\sum_{i=1}^{\lfloor \frac{3n}{4}\rfloor} X_i}}{\Pp{\sum_{i=1}^n X_i=-1}}}
\end{align*}
Using the local limit theorem \cite[Theorem~4.2.1]{MR0322926} and the fact that the $\alpha$-stable density is bounded \cite[Eq.(I20)]{zolotarev_one_1986}, the term
\begin{align*}
	\frac{\Ppsq{\sum_{i=\lfloor \frac{3n}{4}\rfloor+1}^n X_i=-1 -\sum_{i=1}^{\lfloor \frac{3n}{4}\rfloor} X_i }{\sum_{i=1}^{\lfloor \frac{3n}{4}\rfloor} X_i}}{\Pp{\sum_{i=1}^n X_i=-1}}
\end{align*} 
appearing in the integral is bounded uniformly in $n\geq 4$. 
Using indicator functions for $F$, this ensures that any event that occurs with probability tending to $0$ or $1$ as $n\rightarrow \infty$ for the unconditioned model, also tend to $0$ or $1$ for the conditioned model. 
Note that we can apply the same argument for the vector $(X_{\lfloor \frac{n}{4}\rfloor },\dots, X_{n}; Z_{\lfloor \frac{n}{4}\rfloor}, \dots ,Z_{n}))$. 

\begin{proof}[Proof of Lemma~\ref{lem:moment measure discrete block implies B2 or B3}]
We consider the case $\beta \leq \alpha$ and the case $\beta > \alpha$ in turn. 
	
$\bullet$ Case $\beta \leq \alpha$. In this case, we want to show that there exists a sequence $(a_n)_{n\geq 1}$ so that 
$\frac{1}{a_n}\cdot (M_{\lfloor nt \rfloor})_{0\leq t\leq 1} \underset{n \rightarrow \infty}{\rightarrow} (t)_{0\leq t \leq 1}$ in probability for the uniform topology. 
First, let us show that it is enough to check this if we replace $(M_k)_{1\leq k \leq n}$ with $(S_k)_{1\leq k \leq n}$ where $S_k:=\sum_{i=1}^kZ_i$.
Indeed, if 
\begin{align}\label{eq:Sk converges in probability}
\frac{1}{a_n}\cdot (S_{\lfloor nt \rfloor})_{0\leq t\leq 1} \underset{n \rightarrow \infty}{\rightarrow} (t)_{0\leq t \leq 1}
\end{align}
in probability, 
then we have 
\begin{align*}
	\frac{1}{a_n}\cdot (S_{\lfloor nt \rfloor})_{0\leq t\leq \frac{3}{4}} \underset{n \rightarrow \infty}{\rightarrow} (t)_{0\leq t \leq \frac{3}{4}} \quad  \text{and} \quad \frac{1}{a_n}\cdot (S_{\lfloor nt \rfloor} - S_{\lfloor \frac{3}{4}n \rfloor})_{\frac14\leq t\leq 1} \underset{n \rightarrow \infty}{\rightarrow} (t-\frac14)_{\frac14\leq t \leq 1}
\end{align*}
also in probability. 
By the absolute continuity argument, both those convergences also hold conditional on the event $\{\sum_{i=1}^nX_i=-1\}$, which ensures that $\frac{1}{a_n}\cdot (S_{\lfloor nt \rfloor})_{0\leq t\leq 1} \underset{n \rightarrow \infty}{\rightarrow} (t)_{0\leq t \leq 1}$ under $\Ppsq{\cdot }{\sum_{i=1}^{n}X_i=-1}$. 
Then, this is enough to get that $\frac{1}{a_n}\cdot (S_{U+\lfloor nt \rfloor})_{0\leq t\leq 1} \underset{n \rightarrow \infty}{\rightarrow} (t)_{0\leq t \leq 1}$ as well. 
Since $(M_k)_{1\leq k \leq n}$ is distributed as $S_{U+\lfloor nt \rfloor}$ under $\Ppsq{\cdot }{\sum_{i=1}^{n}X_i=-1}$, we get $\frac{1}{a_n}\cdot (M_{\lfloor nt \rfloor})_{0\leq t\leq 1} \underset{n \rightarrow \infty}{\rightarrow} (t)_{0\leq t \leq 1}$ in probability as required.

In the end, we just have to find a sequence $a_n$ such that \eqref{eq:Sk converges in probability} holds. 
In fact, by monotonicity, we just need to check that the convergence holds for $t=1$.
If $\Ec{Z}=\Ec{\nu_D(B_D)}$ is finite then it is easy to check that we can take $a_n:= \Ec{\nu_D(B_D)} \cdot n$. This is always the case if $\beta <\alpha$ since, using the assumption of the lemma
\begin{align*}
	\Ec{\nu_D(B_D)}&= \sum_{k=0}^{\infty}\Pp{D=k}\Ec{\nu_k(B_k)}\\
	&\underset{\text{assum.}}{\leq} C \cdot \sum_{k=0}^{\infty}\Pp{D=k} k^\beta \leq C\cdot \Ec{D^\beta} < \infty,
\end{align*}
so we can apply the law of large numbers.

If $\Ec{Z}$ is infinite, which in our case can only happen if $\beta=\alpha$, we need to understand the tail behaviour of $Z$. 
We will show in that case that 
\begin{align}\label{eq:tail Z from tail D}
\P(Z\geq x) \sim \Pp{D \geq x^{\frac{1}{\alpha}}} \cdot \Ec{\nu(\cB)} \qquad \text{as}~x\to \infty,
\end{align}
which in particular entails that $Z$ has a regularly varying tail of index $-1$ as $x\to \infty$. 
Using then general results (e.g. \cite[Theorem~3.7.2]{MR2722836}) for sums of random variables with regularly varying tail yields the result.  

First, for a given integer $N$ we introduce $Y_N^+$ and $Y_N^-$ whose distribution are defined in such a way that for all $x\geq 0$,
\begin{align*}
 \Pp{Y_N^+\geq x}= \sup_{k\geq N} \Pp{\frac{\nu_k(B_k)}{k^\alpha} \geq x} \quad \text{  and  }  \quad \Pp{Y_N^-\geq x}= \inf_{k\geq N} \Pp{\frac{\nu_k(B_k)}{k^\alpha} \geq x}.
\end{align*}
Note that for both of them, we have the following bound thanks to our assumption and Markov's inequality
\begin{align*}
 \Pp{Y_N^-\geq x}\leq \Pp{Y_N^+\geq x} \leq \sup_{k\geq 1}\Pp{\frac{\nu_k(B_k)}{k^\alpha} \geq x} \leq \frac{\sup_{k\geq 1} \Ec{\left(\frac{\nu_k(B_k)}{k^\alpha}\right)^{1+\eta}}}{x^{1+\eta}}.
\end{align*}
Since the quantity in the last display is integrable in $x$, and since $\Pp{Y_N^\pm\geq x}\rightarrow \Pp{\nu(\cB)\geq x}$ as $N\rightarrow\infty$ thanks to assumption D\ref{c:GHPlimit}, we have $\Ec{Y_N^\pm}\rightarrow \Ec{\nu(\cB)}$ by dominated convergence.
Also note that we have $\Ec{\left(Y_N^\pm\right)^{1+\eta/2}}<\infty$.
In order to prove \eqref{eq:tail Z from tail D}, we then just need to show that for every $N$, we have
\begin{align*}
	\P(Z\geq x) \geq \Pp{D \geq x^{\frac{1}{\alpha}}} \cdot \Ec{Y_N^-} (1+o(1)) \quad \text{  and  } \quad \P(Z\geq x) \leq \Pp{D \geq x^{\frac{1}{\alpha}}} \cdot \Ec{Y_N^+} (1+o(1)).
\end{align*}

Let us fix $N$ and write
\begin{align*}
	\Pp{Z\geq x}= \Pp{\nu_D(B_D) \geq x} &= \Pp{\nu_D(B_D)\geq x, \ D\geq N} + \Pp{\nu_D(B_D) \geq x, \ D<N}\\
	&= \Pp{\nu_D(B_D)\geq x, \ D\geq N} + O(x^{-1-\eta}).
\end{align*}
%
Now with a random variable $Y_N^+$ that is independent of $D$, we can write
\begin{align*}
	\Pp{\nu_D(B_D)\geq x, \ D\geq N}&= \Ec{\Ppsq{ \frac{\nu_D(B_D)}{D^\alpha}\geq \frac{x}{D^\alpha}}{D} \ind{D\geq N}}\\
	&\leq \Ec{\Pp{Y_N^+\geq \frac{x}{D^\alpha}} \ind{D\geq N}}\\
	&= \Pp{Y_N^+D^\alpha \geq x, \ D>N}\\
	&=\Pp{Y_N^+ D^\alpha \geq x} - \Pp{Y_N^+ D^\alpha \geq x, \ D\leq N}\\
	&=\Pp{Y_N^+ D^\alpha \geq x} - O(x^{-1-\eta}). 
\end{align*}
Now using \cite[Proposition~1.1]{kasahara_note_2018} this last display is equivalent to $\Pp{D\geq x^{\frac{1}{\alpha}}} \cdot \Ec{Y_N^+}$. 
We also get the other inequality $\Pp{\nu_D(B_D) \geq x} \geq \Pp{D\geq x^{\frac{1}{\alpha}}} \cdot \Ec{Y_N^-}\cdot (1+o(1))$, which finishes the proof.

$\bullet$ Case $\beta > \alpha$. 
Since the arguments used here are quite similar to what we did above, let us just sketch the proof. 
Let $\epsilon >0$ and consider
\begin{align*}
	R(n,\delta):= b_n^{-\beta} \cdot \sum_{i=1}^{n} \nu_{v_i}(B_{v_i})\ind{\out{}(v_i)\leq \delta b_n}.
\end{align*}
Using the same type of arguments as above, we can first show that 
\begin{align*}
	 b_n^{-\beta} \cdot \sum_{i=1}^{\lfloor\frac{3n}{4}\rfloor} Z_i \ind{X_i+1\leq \delta b_n} \qquad \text{  and  } \qquad b_n^{-\beta} \cdot \sum_{i=\lfloor\frac{n}{4}\rfloor}^{n} Z_i \ind{X_i+1\leq \delta b_n}
\end{align*}
tend to $0$ in probability under the unconditioned measure, uniformly in $n\geq 4$. Then using the absolute continuity argument together with the equality in distribution we get that  
\begin{align}
	\limsup_{n\rightarrow \infty}\Pp{R(n,\delta)\geq \epsilon} \underset{\delta \rightarrow 0}{\rightarrow} 0,
\end{align}
This ensures that condition B\ref{cond:small decorations dont contribute to mass} is satisfied.
\end{proof}

\bibliographystyle{siam}
\bibliography{dtree}

\end{document}